\documentclass[10pt]{amsart}

%
\usepackage{amsmath}
\usepackage{amsthm}
\usepackage{amssymb}
\usepackage{mathrsfs}
\usepackage{Ricci}

%
\theoremstyle{plain}
\newtheorem{thm}{Theorem}[section]
\newtheorem*{thmnn}{Theorem} 
\newtheorem*{mthm}{Main Theorem}
\newtheorem{lem}[thm]{Lemma}
\newtheorem{cor}[thm]{Corollary}
\newtheorem{prop}[thm]{Proposition}

\theoremstyle{definition}
\newtheorem{defn}[thm]{Definition}

%
\newcommand{\RR}{\mathbb{R}}

\newcommand{\HH}{\mathbb{H}}
\newcommand{\QH}{\mathcal{H}}
\newcommand{\PP}{\mathcal{P}}

\newcommand{\vphi}{\varphi}
\newcommand{\veps}{\varepsilon}
\newcommand{\Casimir}{\Upsilon}
\newcommand{\laplace}{\Delta}
\newcommand{\Hlaplace}{\mathcal{L}_0}
\newcommand{\Lie}{\mathscr{L}}
\newcommand{\inmult}{\lrcorner\,}

\newcommand{\parfrac}[1]{\frac{\partial}{\partial #1}}
\newcommand{\order}[1]{{\mathscr{O}_{#1}}}

\newcommand{\norm}[1]{\lVert #1 \rVert}

\renewcommand{\Im}{\textrm{Im}}

\DeclareMathOperator{\End}{End}

\DeclareMathOperator{\grad}{grad}

%

\begin{document}

\title[QC pseudohermitian normal coordinates]{Quaternionic contact pseudohermitian Normal coordinates}
\author{Christopher S. Kunkel}

\begin{abstract}
This paper constructs a family of coordinate systems about a point on a quaternionic contact manifold, called quaternionic contact pseudohermitian normal coordinates.  Once defined, conformal variations of the quaternionic contact structure induce changes on the coordinates which are studied in an effort to simplify the torsion and curvature at the center point.  These normalizations are constructed in the hope that they may help prove the quaternionic contact version of the Yamabe problem.
\end{abstract}

\maketitle

\section{Introduction} \label{sec:intro}

Quaternionic contact manifolds, first defined in \cite{Biquard:1999} and \cite{Biquard:2000},  appear naturally as the boundaries at infinity of asymptotically hyperbolic quaternionic manifolds.  In this way they generalize to the quaternion algebra the sequence of families of geometric structures that are the boundaries at infinity of real and complex asymptotically hyperbolic spaces.  In the real case, these manifolds are simply conformal manifolds, that is, manifolds with a family of Riemannian metrics that are all conformally related by smooth, positive functions.  In the complex case, the boundary structure is that of a CR manifold, a manifold with a contact structure and for each choice of contact form, a metric on the contact distribution, satisfying certain additional conditions related to those defining a complex manifold.

A quaternionic contact manifold is similar, having a type of contact structure, defined, not by a single non-vanishing $1$-form as in the CR case, but rather by an $\RR^3$-valued $1$-form.  On the kernel of this form, there is a metric which, when paired with the exterior derivatives of the three components of the contact form, defines a triple of almost complex structures satisfying the commutation relations of the unit quaternions $i$, $j$ and $k$.  Like the CR case, there is not a fixed contact form, but rather a conformal family of them, which of course gives rise to distinct but conformally related metrics on the contact distribution.  The almost complex structures defined by this varying family determine a three-dimensional subbundle of the endomorphism bundle of the contact distribution which may be compared to the three dimensional space of imaginary quaternions.

A natural question coming from this conformal freedom is the quaternionic contact Yamabe problem, named for its obvious similarity to the original Yamabe problem which asked if, given any metric on a compact manifold, there is a conformal metric of constant scalar curvature.  Many mathematicians have contributed to this problem since it was first posed in 1960; see \cite{LeeParker:1987} for a detailed overview and references.

Because CR manifolds have a similar conformal structure, and it is possible to define a well-adapted linear connection on such manifolds, there is a natural generalization of the Yamabe problem to the CR Yamabe problem.  Though more complicated than its conformal counterpart, the CR Yamabe problem has been solved using several of the techniques used in the proof of the conformal case.

In the same way, this paper provides a key step toward the solution of the quaternionic contact Yamabe problem, namely the construction of a coordinate system in which the invariants of the quaternionic contact structure are considerably simplified.

The primary tool for this is a generalized version of Riemannian normal coordinates, better adapted to the study of quaternionic contact manifolds.  The main ingredient in the construction of these coordinates is the fact that the tangent space of a quaternionic manifold has a natural \emph{parabolic} dilation, instead of the more common linear dilation seen in Riemannian geometry.  This suggests that the curves of interest used to define an exponential map from the tangent space at a point to the base manifold should incorporate this parabolic structure.  This gives rise to the following theorem.
\begin{thmnn}[pg.~\pageref{thm:coordinates}]
Let $(M,\nabla)$ be a manifold with connection whose tangent bundle decomposes as the direct sum of two  distributions, $H$ and $V$.  Choose any $q\in M$, and let $(X,Y)\in H_q \oplus V_q = T_q M$ be any tangent vector.  For any curve $\gamma$ on $M$ and any vector field $X$ along $\gamma$, let $D_t X$ denote the covariant derivative of $X$ along $\gamma$.   Define $\gamma_{(X,Y)}$ to be the curve satisfying
\begin{equation*}
D_t^2 {\dot\gamma}_{(X,Y)} = 0,\quad  \gamma_{(X,Y)}(0)=q, \quad {\dot\gamma}_{(X,Y)}(0)=X, \text{ and } D_t {\dot\gamma}_{(X,Y)}(0) = Y.
\end{equation*}
Then there are neighborhoods $0\in \mathcal{O}\subset T_qM$ and $q\in \mathcal{O}_M \subset M$ so that the function $\Psi : \mathcal{O} \to \mathcal{O}_M : (X,Y)\mapsto \gamma_{(X,Y)}(1)$  is a diffeomorphism, and satisfies the parabolic scaling $\Psi(tX,t^2Y)=\gamma_{(X,Y)}(t)$ wherever either side is defined.
\end{thmnn}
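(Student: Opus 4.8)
The plan is to reinterpret the defining conditions as an autonomous first-order ODE on a bundle over $M$, read off the domain and smoothness of $\Psi$ from the general theory of flows, derive the parabolic scaling from uniqueness of solutions, and finally verify that $d\Psi_0$ is invertible so that the inverse function theorem applies. \emph{Setting up the flow:} along a candidate curve $\gamma$ put $v=\dot\gamma$ and $a=D_t\dot\gamma$; in a coordinate chart about $q$ the equation $D_t^2\dot\gamma=0$ is equivalent to the autonomous first-order system $\dot\gamma^k=v^k$, $\dot v^k=a^k-\Gamma^k_{ij}v^iv^j$, $\dot a^k=-\Gamma^k_{ij}v^ia^j$, which has smooth right-hand side; invariantly this is the flow of a smooth vector field on the bundle over $M$ whose fiber at $p$ is $T_pM\times T_pM$. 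The prescribed data $\gamma(0)=q$, $\dot\gamma(0)=X$, $D_t\dot\gamma(0)=Y$ selects the point $(q,X,Y)$, and $(X,Y)\in H_q\oplus V_q=T_qM\mapsto(q,X,Y)$ is a smooth embedding. Standard ODE theory supplies a maximal open flow domain, smooth dependence on initial conditions, and uniqueness. Since the constant curve at $q$ solves the system for $(X,Y)=(0,0)$ and exists for all time, the flow domain contains $\bigl(1,(q,0,0)\bigr)$; by openness there is a neighborhood $\mathcal{O}$ of $0\in T_qM$ on which $\gamma_{(X,Y)}$ exists up to $t=1$, so $\Psi(X,Y)=\gamma_{(X,Y)}(1)$ is defined and smooth on $\mathcal{O}$, with $\Psi(0)=q$.

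\emph{Parabolic scaling:} fix $(X,Y)$ and a scalar $c$, and set $\sigma(s):=\gamma_{(X,Y)}(cs)$. Using the reparametrization identity $D_s\bigl(V(cs)\bigr)=c\,(D_tV)(cs)$ for vector fields $V$ along $\gamma_{(X,Y)}$ (immediate in coordinates), one computes $\sigma(0)=q$, $\dot\sigma(0)=cX$, $D_s\dot\sigma(0)=c^2Y$, and $D_s^2\dot\sigma(s)=c^3\,(D_t^2\dot\gamma_{(X,Y)})(cs)=0$; since $cX\in H_q$ and $c^2Y\in V_q$, this is precisely the initial value problem defining $\gamma_{(cX,c^2Y)}$, so uniqueness forces $\gamma_{(cX,c^2Y)}(s)=\gamma_{(X,Y)}(cs)$. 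Evaluating at $s=1$ gives $\Psi(cX,c^2Y)=\gamma_{(X,Y)}(c)$ wherever either side is defined; in particular $\mathcal{O}$ may be replaced by a neighborhood of $0$ invariant under the dilations $\delta_c(X,Y)=(cX,c^2Y)$, $c\in[0,1]$.

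\emph{Local diffeomorphism:} identifying $T_0(T_qM)$ with $T_qM=H_q\oplus V_q$, by the inverse function theorem it suffices to show $d\Psi_0$ is invertible. For $X\in H_q$ the scaling gives $\Psi(cX,0)=\gamma_{(X,0)}(c)$, hence $d\Psi_0(X,0)=\dot\gamma_{(X,0)}(0)=X$. For $Y\in V_q$ the scaling gives instead $\Psi(0,cY)=\gamma_{(0,Y)}(\sqrt{c})$; since $\gamma_{(0,Y)}$ has vanishing initial velocity (so its Christoffel term drops out) and covariant acceleration $Y$, its chart expansion is $\gamma_{(0,Y)}(s)=q+\tfrac12 s^2Y+O(s^3)$, whence $\Psi(0,cY)=q+\tfrac12 cY+O(c^{3/2})$ and $d\Psi_0(0,Y)=\tfrac12 Y$. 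By linearity $d\Psi_0=\mathrm{id}_{H_q}\oplus\tfrac12\,\mathrm{id}_{V_q}$, an isomorphism, so the inverse function theorem yields neighborhoods $0\in\mathcal{O}\subset T_qM$ and $q\in\mathcal{O}_M\subset M$ with $\Psi$ a diffeomorphism between them, the scaling identity persisting on $\mathcal{O}$. The one delicate point is this vertical computation: the straight line $c\mapsto(0,cY)$ is incompatible with the parabolic dilation, so naive differentiation only returns $\dot\gamma_{(0,Y)}(0)=0$, and one must reparametrize by $c\mapsto\sqrt{c}$ (equivalently, expand to second order) --- this is exactly where the parabolic scaling earns its keep and where the factor $\tfrac12$, and with it the nondegeneracy of $d\Psi_0$, originates.
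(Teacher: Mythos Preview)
Your proof is correct and follows essentially the same approach as the paper: both lift the third-order ODE to a first-order flow on the Whitney sum $TM\oplus TM$, establish the parabolic scaling by reparametrizing and invoking uniqueness, and then compute $d\Psi_0=\mathrm{id}_{H_q}\oplus\tfrac12\,\mathrm{id}_{V_q}$ to invoke the inverse function theorem. The only notable difference is in the vertical computation: the paper identifies $\gamma_{(0,2tY)}(s)$ with a reparametrized $\nabla$-geodesic $\alpha(s^2t)$ and differentiates $\alpha(t/2)$, whereas you use the coordinate Taylor expansion $\gamma_{(0,Y)}(s)=q+\tfrac12 s^2Y+O(s^3)$ together with $\Psi(0,cY)=\gamma_{(0,Y)}(\sqrt{c})$; both yield the factor $\tfrac12$, and your version is arguably more direct.
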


This theorem represents a generalization of the work in \cite{JerisonLee:1989}, from which this paper takes its inspiration.  There the authors construct a parabolic coordinate system on a CR manifold for the same purpose we have here.  Their result requires several other, very specific hypotheses and works only in the case of a CR manifold.  This new proof requires nothing more than a linear connection and decomposition of the tangent bundle by complementary distributions.

Using this theorem and a special frame at the center point $q$, I am able to construct a set of parabolic normal coordinates, so named for their similarity to the normal coordinates of Riemannian geometry with a parabolic, rather than linear scaling.  In particular we have the following theorem.
\begin{thmnn}[pg.~\pageref{thm:paraboliccoordinates}]
Let $M$ be a QC manifold with fixed pseudohermitian structure, and let $q\in M$ be any point.  Then there exist parabolic normal coordinates $(x^\alpha,t^i)$ about $q$ so that at $q$ the metric is the standard Euclidean metric, the contact forms are the standard forms on the quaternionic Heisenberg group and the connection $1$-forms vanish. Any two such coordinate systems centered at $q$ are related by a unique linear transformation in $Sp(n)Sp(1)$.
\end{thmnn}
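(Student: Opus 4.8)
The plan is to specialize the construction of $\Psi$ above to the QC data. On $(M,\eta)$ let $\nabla$ be the Biquard connection, $H=\ker\eta$ the contact distribution, and $V$ the vertical bundle spanned by the three Reeb fields $\xi_1,\xi_2,\xi_3$ dual to $\eta^1,\eta^2,\eta^3$: these furnish exactly the ingredients — a linear connection together with a splitting $TM=H\oplus V$ — required to produce the parabolic exponential map $\Psi$ and its parabolic scaling. I would then choose at $q$ an \emph{adapted} frame, namely an orthonormal frame $e_1,\dots,e_{4n}$ of $H_q$ compatible with the three almost complex structures (so that it realizes the standard quaternionic structure on $H_q\cong\HH^n$), completed by the Reeb frame $\xi_1,\xi_2,\xi_3$ of $V_q$. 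This frame identifies $T_qM$ with $\RR^{4n}\oplus\RR^3$, the underlying space of the quaternionic Heisenberg algebra; composing $\Psi^{-1}$ with the resulting linear coordinates defines $(x^\alpha,t^i)$, and the scaling property of $\Psi$ immediately gives the asserted parabolic scaling of the coordinates.

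I would then read the structure at $q$ off the defining curves. Since $\gamma_{(X,0)}(t)$ has coordinate expression $(tX^\alpha,0)$ and velocity $X$ at $t=0$, we get $\partial_{x^\alpha}|_q=e_\alpha$; evaluating $D_t\dot\gamma_{(0,Y)}(0)=Y$ in coordinates (where $\gamma_{(0,Y)}$ reads $(0,t^2Y^i)$) identifies $\partial_{t^i}|_q$ with $\xi_i$ up to a fixed normalizing constant, which I would absorb into the normalization of the standard forms. Because $\{e_\alpha\}$ is orthonormal and quaternionic-adapted, $g_q$ is the Euclidean metric and the complex structures act standardly at $q$; because each $e_\alpha$ lies in $\ker\eta$ while $\eta^i(\xi_j)=\delta^i_j$, the forms $\eta^i$ reduce at $q$ to the constant forms $dt^i$, which (with that normalization) are precisely the standard quaternionic Heisenberg contact forms at the origin.

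The crux is the vanishing of the connection $1$-forms at $q$. I would extend the adapted frame to a neighbourhood of $q$ by $\nabla$-parallel transport along the radial curves $t\mapsto\gamma_{(X,Y)}(t)$; since $\nabla$ is an $Sp(n)Sp(1)$-connection this yields a smooth adapted frame near $q$, and the defining equation gives $\nabla_{\dot\gamma_{(X,Y)}}(\text{frame})=0$, so every connection $1$-form annihilates $\dot\gamma_{(X,Y)}(0)=X$. As $X$ ranges over $H_q$ this forces the connection forms to vanish on all horizontal vectors at $q$. For the vertical directions I would differentiate $D_t^2\dot\gamma_{(X,Y)}=0$ once more in $t$ at $t=0$: this yields a relation among the mixed Christoffel symbols $\Gamma^{a}_{\beta j}(q)$ and $\Gamma^{a}_{j\beta}(q)$, which — together with the structural constraints on Biquard's torsion and the normal form of the coframe and of $d\eta$ already established at $q$ — pins the vertical components of the connection forms to zero. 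I expect this step to be the main obstacle: extracting the mixed and purely vertical parts of the connection at $q$ from a third-order, parabolically scaled geodesic equation, while staying consistent with Biquard's torsion normalization, is precisely the bookkeeping that the Riemannian and CR prototypes are not forced to carry out.

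For uniqueness, note that the construction uses only the Biquard connection and the canonical splitting $H\oplus V$, both intrinsic to the pseudohermitian structure; hence two parabolic normal coordinate systems centered at $q$ share the same family of curves $\gamma_{(X,Y)}$ — which is also what forces their transition map to commute with the parabolic dilation and therefore be linear — and differ only through the identification of $T_qM$ with $\RR^{4n}\oplus\RR^3$, i.e.\ by a linear change-of-frame map $\mathcal{F}'^{-1}\circ\mathcal{F}$. Requiring that \emph{both} systems put $g$, the $\eta^i$ and the complex structures into standard form at $q$ is exactly the requirement that $\mathcal{F}$ and $\mathcal{F}'$ be adapted frames, so $\mathcal{F}'^{-1}\circ\mathcal{F}$ is an automorphism of the standard structure on $\RR^{4n}\oplus\RR^3$, that is, an element of $Sp(n)Sp(1)$; conversely every element of $Sp(n)Sp(1)$ carries one adapted frame to another and so relates two legitimate coordinate systems. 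Since $Sp(n)Sp(1)$ acts freely on adapted frames, the relating transformation is unique.
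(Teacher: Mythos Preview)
Your outline is sound through the construction of $\Psi$, the choice of an $Sp(n)Sp(1)$-adapted frame at $q$, parallel transport of that frame along the parabolic geodesics, and the uniqueness argument. The gap is exactly where you flag it: the vanishing of the connection $1$-forms in the \emph{vertical} directions at $q$. Your proposed route---differentiating $D_t^2\dot\gamma=0$ once more and extracting mixed Christoffel symbols---does not obviously yield $\omega_a^{\ b}(R_i)|_q=0$. The equation $D_t^3\dot\gamma=0$ is an identity along the curve; its coordinate form mixes Christoffel symbols (relative to the coordinate frame) with their derivatives and with $\dot\gamma,\ddot\gamma,\dddot\gamma$, and disentangling the connection $1$-forms \emph{in the parallel frame} on $V_q$ from that expression is not the clean bookkeeping exercise you suggest.

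There is a much simpler fix within your own framework. In the proof of the parabolic exponential map it is shown that for $Y\in V_q$ the curve $\gamma_{(0,Y)}$ is a \emph{reparametrization} of the ordinary $\nabla$-geodesic $\alpha$ with $\dot\alpha(0)=Y$ (explicitly $\gamma_{(0,Y)}(s)=\alpha(s^2/2)$). Since parallel transport depends only on the image of the curve, the special frame is parallel along $\alpha$, whence $\omega_a^{\ b}(\dot\alpha(0))=\omega_a^{\ b}(Y)=0$ at $q$. Together with $\omega_a^{\ b}(X)=0$ for $X\in H_q$ (which you already have from $\dot\gamma_{(X,Y)}(0)=X$), this gives $\omega_a^{\ b}|_q=0$ with no further computation.

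The paper takes a different and more systematic route that you should be aware of, because it also delivers the normal form of $\eta^i$ and $\theta^\alpha$ in one stroke. One introduces the infinitesimal generator of the parabolic dilations, $P=x^\alpha\partial_\alpha+2t^i\partial_i$, and observes that along each radial parabolic geodesic the tangent vector is a scalar multiple of $P$; hence parallelism of the special frame gives $\omega_a^{\ b}(P)\equiv 0$ on a full neighborhood, and similarly $\theta^\alpha(P)=x^\alpha$, $\eta^i(P)=t^i$. Feeding these into Cartan's formula $\Lie_P\varphi = P\lrcorner d\varphi + d(P\lrcorner\varphi)$ yields recursions for the homogeneous pieces $\theta^\alpha_{(m)}$, $\eta^i_{(m)}$, $(\omega_a^{\ b})_{(m)}$, from which one reads off $\theta^\alpha_{(1)}=dx^\alpha$, $\eta^i_{(2)}=\tfrac12\,dt^i - I^i_{\alpha\beta}x^\alpha dx^\beta$, and $(\omega_a^{\ b})_{(1)}=0$ with all higher-order pieces vanishing at $q$. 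In particular this gives $\eta^i|_q=\tfrac12\,dt^i$ and $\omega_a^{\ b}|_q=0$ without any ad hoc analysis of the vertical directions, and it simultaneously produces the Heisenberg normal form of $\eta^i$ to leading order rather than just its value at $q$.
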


Once these parabolic normal coordinates are defined, I explore the effect of a conformal change of contact structure on the curvature tensor of the Biquard connection, the standard linear connection of a quaternionic contact manifold.  Using the parabolic normal coordinates, I am able to define a function that, when used as the conformal factor, causes the symmetrized covariant derivatives of a certain tensor constructed from the curvature and torsion to vanish, as in the following theorem.
\begin{mthm}[pg.~\pageref{thm:symder_vanish}]  Let $M$ be a QC manifold.  For any $q\in M$ and any $N\geq 2$, there is a choice of pseudohermitian structure such that all the symmetrized covariant derivatives of $Q$ with total order less than or equal to $N$ vanish at $q$.  If we express the chosen pseudohermitian structure as a conformal multiple of another pseudohermitian structure, the $1$-jet of the conformal factor  at $q$ may be freely chosen.  Once this is fixed, the Taylor series of the conformal factor at $q$ is uniquely determined.
\end{mthm}

This idea originates in the work of Robin Graham, described in \cite{LeeParker:1987}.  Graham developed coordinates in the conformal case which showed that, at a point, the symmetrized covariant derivatives of the Ricci tensor can be made to vanish to high order by carefully choosing the conformal factor for a conformal manifold.  In their 1989 paper \cite{JerisonLee:1989} Jerison and Lee showed in a similar fashion that the symmetrized covariant derivatives of a tensor constructed from the CR curvature and torsion could be made to vanish at a point.

The structure of the tensor $Q$ is such that many of the curvature and torsion terms of the Biquard connection vanish at the center of normal coordinates when the symmetrized derivatives of $Q$ vanish to order four (see Theorem \ref{thm:mainthm}).  In fact, using invariance theory, I show that the only remaining term of weight less than or equal to $4$ that does not necessarily vanish at the center point is the squared norm of the quaternionic contact version of the Weyl tensor.


Section \ref{sec:background} reviews the relevant background of QC manifolds, including a number of important curvature and torsion identities.  Section \ref{sec:coords} defines QC pseudohermitian normal coordinates and the curvature-torsion tensor that can be made to vanish.  Finally, section \ref{sec:scal_invariants} shows how to use the normalization of section \ref{sec:coords} to simplify the curvature tensor at the origin of the QC normal coordinates.  Throughout the paper many of the results are proved in a similar fashion to the analogous statements in \cite{JerisonLee:1989}, and so the proofs of those results are omitted here.
\section{Background} \label{sec:background}

\subsection{Quaternionic contact manifolds}\label{sec:qc}

We begin by defining the focus of this paper, quaternionic contact manifolds.  These manifolds were first described in \cite{Biquard:2000}; however, in this paper we will use the definition provided in \cite{Vassilevetal:2006}, since many useful identities come from that paper.  In some sense, quaternionic contact manifolds are strictly pseudoconvex CR manifolds working with the quaternions instead of the complex numbers.  In fact the similarities are so strong that many of the proofs in section \ref{sec:coords} are almost identical to the corresponding proofs in \cite{JerisonLee:1989}.  Formally, we have the following definition. 
\begin{defn}
A \emph{quaternionic contact manifold}, or QC manifold,  is a $(4n+3)$-dimensional manifold $M$ with a $4n$-dimensional distribution, $H$, that satisfies the following properties:
\begin{itemize}
\item $H$ is the kernel of an $\RR^3$-valued 1-form $\eta = (\eta^1, \eta^2, \eta^3)$;
\item on $H$, there are almost complex structures $I^i$, $i=1,2,3$ that satisfy the commutation relations of the unit quaternions, i.e. \[(I^i)^2=I^1I^2I^3=-Id;\]
\item there is a sub-Riemannian metric $g$ on $H$ that satisfies \[2g(I^iX,Y)=d\eta^i(X,Y)\] for all vectors $X,Y$ in $H$ and each $i=1,2,3$.
\end{itemize}
\end{defn}

On a QC manifold, a choice of contact forms, metric and almost complex structures is called a \emph{QC pseudohermitian structure}. Such a choice is not unique.  First note that there is a conformal freedom, given by multiplying $g$ and the $\eta^i$ by the same smooth, positive function and leaving the almost complex structures unchanged.  On the other hand, within a given conformal class we have the following lemma from \cite{Vassilevetal:2006}.  Given the almost complex structures in the definition above, we let $Q=Span\{I^i\}_{i=1,2,3}$.
\begin{lem}
\begin{enumerate}
\item If $(\eta,I^i, g)$ and $(\eta,\hat{I}^i, \hat{g})$ are two QC structures on $M$, then $I^i = \hat{I}^i$ for $i=1,2,3$ and $g=\hat g$.
\item If $(\eta, Q, g)$ and $(\hat\eta, \hat Q, g)$ are two QC structures on $M$ with the same horizontal distribution, then $Q=\hat Q$ and $\hat\eta = \Psi \eta$ for some smooth $SO(3)$-valued function $\Psi$.
\end{enumerate}
\end{lem}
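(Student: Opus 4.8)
The plan is to recover, in part (1), the complex structures and metric directly from $\eta$ by unwinding the compatibility condition $2g(I^iX,Y)=d\eta^i(X,Y)$, and in part (2) to show the matrix relating $\hat\eta$ to $\eta$ is forced into $SO(3)$ by the quaternion relations; the recurring device is that restricting $d\eta^i$ to pairs of vectors in $H$ makes the contact forms disappear. For part (1), the horizontal $2$-forms $\omega^i:=d\eta^i|_{H\times H}$ depend only on $\eta$, and each is nondegenerate on $H$ (if $\omega^i(X,\cdot)=0$ then $g(I^iX,\cdot)=0$, so $X=0$). Using nondegeneracy of $\omega^1$ I would introduce the bundle endomorphism $J$ of $H$ characterized by $\omega^1(JX,Y)=\omega^2(X,Y)$; it depends only on $\eta$. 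Computed from $(\eta,I^i,g)$, the identity $2g(I^1JX,Y)=2g(I^2X,Y)$ forces $I^1J=I^2$, hence $J=(I^1)^{-1}I^2=-I^1I^2=-I^3$; computed from $(\eta,\hat I^i,\hat g)$ it equals $-\hat I^3$, so $I^3=\hat I^3$, and cycling the indices gives $I^i=\hat I^i$ for all $i$. Then $2\hat g(I^iX,Y)=d\eta^i(X,Y)=2g(I^iX,Y)$ for $X,Y\in H$, and since $I^i$ is onto, $\hat g=g$.

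For part (2), fix representatives $I^i$ of $Q$ and $\hat I^i$ of $\hat Q$ compatible with $\eta$ and $\hat\eta$ in the sense of the third axiom. Since $H$ has codimension $3$ and is the common kernel of both $\{\eta^j\}$ and $\{\hat\eta^j\}$, each triple is a local frame for the annihilator of $H$, so $\hat\eta^i=\sum_j\Psi^i_j\eta^j$ for a smooth $GL(3,\RR)$-valued function $\Psi=(\Psi^i_j)$; the goal is to show $\Psi$ is $SO(3)$-valued. Differentiating, $d\hat\eta^i=\sum_j(d\Psi^i_j\wedge\eta^j+\Psi^i_j\,d\eta^j)$, and restricting to $X,Y\in H$ kills every $d\Psi^i_j\wedge\eta^j$ term since $\eta^j$ annihilates both arguments; hence $2g(\hat I^iX,Y)=\sum_j\Psi^i_j\,2g(I^jX,Y)$ on $H$, i.e. $\hat I^i=\sum_j\Psi^i_jI^j$.

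I would then feed this into $(\hat I^i)^2=\hat I^1\hat I^2\hat I^3=-Id$. Using $I^jI^k=-\delta_{jk}Id+\sum_l\epsilon_{jkl}I^l$ one gets $(\hat I^i)^2=-\big(\sum_j(\Psi^i_j)^2\big)Id$ (the skew part cancels by symmetry), so each row of $\Psi$ is a unit vector; expanding $\hat I^1\hat I^2=\hat I^3$ the same way and comparing coefficients against the linearly independent endomorphisms $Id,I^1,I^2,I^3$ of $H$ gives $\sum_j\Psi^1_j\Psi^2_j=0$ and $\Psi^3_l=\sum_{j,k}\epsilon_{jkl}\Psi^1_j\Psi^2_k$ --- rows $1,2$ orthonormal, row $3$ their cross product --- so $\Psi\in SO(3)$; and since $\Psi$ is invertible, $\hat I^i=\sum_j\Psi^i_jI^j$ yields $\hat Q=Span\{\hat I^i\}=Span\{I^i\}=Q$. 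I do not expect a genuine obstacle here; the arithmetic is elementary once set up, and the only points demanding care are the nondegeneracy and independence facts that license the manipulations --- nondegeneracy of each $\omega^i$ on $H$, pointwise linear independence of $\{\eta^1,\eta^2,\eta^3\}$ (so that $\Psi$ is well defined and invertible), and linear independence of $Id,I^1,I^2,I^3$ as endomorphisms of $H$ (which holds because $v,I^1v,I^2v,I^3v$ are mutually $g$-orthogonal and nonzero for $v\neq0$) --- together with pinning down a sign convention so that $(I^i)^2=I^1I^2I^3=-Id$ is equivalent to $I^jI^k=-\delta_{jk}Id+\sum_l\epsilon_{jkl}I^l$.
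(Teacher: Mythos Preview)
Your argument is correct. The paper does not actually supply a proof of this lemma; it simply attributes the result to \cite{Vassilevetal:2006} and moves on. So there is no ``paper's own proof'' to compare against in the strict sense.

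That said, your approach is essentially the standard one and is what one finds in the cited source. In part~(1) you recover $I^3$ (and, by cycling, all $I^i$) from the $\eta$-dependent data alone by composing the nondegenerate horizontal $2$-forms, and then read off $g=\hat g$. In part~(2) you write $\hat\eta=\Psi\eta$ with $\Psi$ a priori $GL(3,\RR)$-valued, restrict $d\hat\eta^i$ to $H$ to obtain $\hat I^i=\sum_j\Psi^i_j I^j$, and then force $\Psi\in SO(3)$ from the quaternion relations by expanding in the basis $\{Id,I^1,I^2,I^3\}$. The supporting claims you flag --- nondegeneracy of each $\omega^i$ on $H$, pointwise linear independence of the $\eta^j$, and linear independence of $Id,I^1,I^2,I^3$ in $\End(H)$ --- are exactly the right checkpoints, and your justifications for them are sound.

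One small remark: in part~(2) you could streamline slightly by noting that once $\hat I^i=\sum_j\Psi^i_j I^j$ is established, the conclusion $\hat Q=Q$ is immediate (spans agree since $\Psi$ is invertible), and then the $SO(3)$ constraint on $\Psi$ follows from the fact that both triples $\{I^i\}$ and $\{\hat I^i\}$ are orthonormal for the natural inner product $\langle A,B\rangle=-\frac{1}{4n}\tr(AB)$ on $Q$, together with the orientation fixed by $I^1I^2I^3=-Id$. This is equivalent to your explicit expansion but packages the algebra more invariantly.
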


The presence of the three almost complex structures and their relation to the metric $g$ on $H$ provides an action of $Sp(n)Sp(1)$ on the bundle $H$.  An $Sp(n)Sp(1)$ frame for $H$ is an orthonormal frame $\xi_\alpha$, $\alpha=1,\ldots, 4n$ with $\xi_{4k+i+1}=I^i \xi_{4k+1}$ for $k=0,\ldots,n-1$ and $i=1,2,3$.

\subsubsection{The Biquard connection}\label{sec:biquardconn}

In his 2000 paper, Biquard defines a connection well suited to the study of QC manifolds, similar in spirit to the Tanaka-Webster connection on a CR manifold.  Let $Q\subset \End{H}$ be the span of the three almost complex structures. 
\begin{thm}[\cite{Biquard:2000}] \label{thm:biquardconn} Let $(M^{4n+3},\eta,g,Q)$ be a manifold with QC pseudohermitian structure, with $n>1$.  Then there exists a unique linear connection $\nabla$ with torsion $T$ and a unique distribution $V\subset TM$, complementary to $H$, such that:
\begin{itemize}
\item $\nabla$ preserves the decomposition $TM=H\oplus V$ and the metric $g$ on $H$;
\item given $X,Y\in H$, $T(X,Y)\in V$;
\item $\nabla$ preserves the $Sp(n)Sp(1)$ structure on $H$ (i.e. $Q$ is preserved);
\item given $R\in V$, $T(R,\cdot)|_H$ is an endomorphism of $H$ in $(\mathfrak{sp}(n)\oplus\mathfrak{sp}(1))^\perp\subset \mathfrak{gl}(4n)$;
\item there is natural isomorphism $\vphi:V\to Q$ and $\nabla \vphi=0$.
\end{itemize}
\end{thm}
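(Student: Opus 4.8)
The plan is to decouple the statement into two stages: first single out the vertical distribution $V$ together with the natural isomorphism $\vphi\colon V\to Q$, and only then, with $V$ fixed, pin down $\nabla$. The key algebraic object throughout is the Lie algebra $\mathfrak g:=\mathfrak{sp}(n)\oplus\mathfrak{sp}(1)$, acting on the $4n$-dimensional fibres of $H$ as the normalizer of $Q$ inside $\mathfrak{so}(H)$; the whole argument reduces to understanding a connection form valued in $\mathfrak g$.

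\emph{Stage 1: the distribution.} First I would establish the pointwise, linear-algebraic fact that the pseudohermitian data $(\eta,g,Q)$ determine a unique triple of vector fields $\xi_1,\xi_2,\xi_3$ transverse to $H$ — the Reeb fields — normalized by
\[
\eta^j(\xi_i)=\delta^j_i,\qquad (\xi_i\inmult d\eta^i)|_H=0,\qquad (\xi_i\inmult d\eta^j)|_H=-(\xi_j\inmult d\eta^i)|_H .
\]
Writing $\xi_i=R_i+W_i$ with $R_i$ a fixed transversal and $W_i\in H$ unknown, these conditions become an inhomogeneous linear system for $(W_1,W_2,W_3)$ whose coefficient operator is invertible precisely when $n>1$; this is the first, essential appearance of the hypothesis (for $n=1$ the system is not of full rank, and the Reeb fields, if they exist at all, must be fixed by an extra normalization). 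Put $V:=\mathrm{span}\{\xi_1,\xi_2,\xi_3\}$ and $\vphi(\xi_i):=I^i$. Finally I would check that \emph{any} connection obeying the five listed properties must have vertical distribution exactly this $V$: from $T(X,Y)\in V$ for $X,Y\in H$ and $2g(I^iX,Y)=d\eta^i(X,Y)$ one reads off that the vector fields dual to $\eta$ satisfy the Reeb normalization above, hence coincide with ours. So nothing is lost by fixing $V$ and $\vphi$ from here on.

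\emph{Stage 2: the connection, and its uniqueness.} With $V$ fixed, the only genuine unknown is the connection form $\omega$ of $\nabla|_H$ relative to an $Sp(n)Sp(1)$-frame: metricity $\nabla g=0$ together with preservation of $Q$ force $\omega(Z)\in\mathfrak g$ for every $Z$, and then $\nabla\vphi=0$ forces $\nabla_ZR=\vphi^{-1}\bigl[\omega(Z),\vphi(R)\bigr]$ for $R\in V$, so the action of $\nabla$ on $V$ is already determined by $\omega$. Split $\omega=\omega_H\oplus\omega_V$ along $TM=H\oplus V$. The requirement $T(X,Y)\in V$ for $X,Y\in H$ is exactly the statement that the Spencer antisymmetrization $\partial\omega_H\in\Lambda^2H^*\otimes H$ equals a tensor prescribed by the Lie bracket of vector fields; the residual freedom is the first prolongation $\mathfrak g^{(1)}=\ker\bigl(\partial\colon H^*\otimes\mathfrak g\to\Lambda^2H^*\otimes H\bigr)$, and since $(\mathfrak{sp}(n)\oplus\mathfrak{sp}(1))^{(1)}=0$, the part $\omega_H$ is uniquely determined. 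The requirement $T(R,\cdot)|_H\in\mathfrak g^\perp$ for $R\in V$ then says that the $\mathfrak g$-component of the now-known endomorphism $X\mapsto\nabla_RX-[R,X]_H$ of $H$ must be cancelled by $\omega_V(R)$; this determines $\omega_V$ uniquely. Hence $\nabla$ is unique.

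\emph{Existence, and the main difficulty.} For existence I would exhibit $\nabla$ directly by an explicit first-order formula — a quaternionic analogue of the Koszul formula for the Tanaka--Webster connection, assembled from $g$, the $d\eta^i$, the Reeb fields $\xi_i$ and the $I^i$ — and then verify the five properties one at a time (alternatively, build $\nabla$ locally over $Sp(n)Sp(1)$-frames and glue, using the uniqueness just proved). The step where this takes real input, and the main obstacle of the whole argument, is \emph{consistency}: one must show that the tensor prescribed for $\partial\omega_H$ in Stage 2 genuinely lies in $\mathrm{im}(\partial)$, and that the various $\mathfrak g$ versus $\mathfrak g^\perp$ splittings can all be met simultaneously. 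This is precisely where the quaternionic-contact compatibility $2g(I^iX,Y)=d\eta^i(X,Y)$ does the essential work. Since this result is due to Biquard, I would cite \cite{Biquard:2000} for the remaining verifications rather than reproduce them here.
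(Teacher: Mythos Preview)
The paper does not prove this theorem at all: it is stated with the attribution \cite{Biquard:2000} and no proof environment follows. Immediately afterwards the paper records, as commentary rather than proof, the explicit description of $V$ via the Reeb fields $R_i$ determined by $\eta^i(R_j)=\delta^i_j$ and $R_i\inmult d\eta^i|_H=0$, together with the identification $\vphi(R_i)=I_i$; this matches your Stage~1, but the paper treats it as a consequence of Biquard's theorem, not as a step toward proving it.

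Your outline is a reasonable sketch of how such canonical-connection theorems are established (fix the transversal by a Reeb-type normalization, then use vanishing of the first prolongation $(\mathfrak{sp}(n)\oplus\mathfrak{sp}(1))^{(1)}=0$ for uniqueness of $\omega_H$, and the $\mathfrak g\oplus\mathfrak g^\perp$ splitting for $\omega_V$), and you correctly flag the role of $n>1$ in making the Reeb system determined. Since you yourself defer the existence verifications to \cite{Biquard:2000}, your proposal and the paper's treatment ultimately coincide: both cite Biquard rather than carry out the full argument. There is therefore no paper proof against which to compare your approach.
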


The distribution $V$ may be described explicitly as the span of the three \emph{Reeb fields}, $R_1,\, R_2,\, R_3$, given by
\[ \eta^i(R_j) = \delta^i_j, \quad R_i\lrcorner d\eta^i|_H = 0,\]
where no summation is implied in the second equation.  In fact, more is true.  Letting $\tensor{\omega}{\down i \up j} = R_i\lrcorner d\eta^j|_H$, we have $\tensor{\omega}{\down i \up j} = -\tensor{\omega}{\down j \up i}$, and these are the restriction to $H$ of the connection 1-forms for $\nabla$ on $V$.  The isomorphism $\vphi$ is then simply $\vphi(R_i) = I_i$, and so these are also connection 1-forms on $Q$.

We can readily extend the metric $g$ to all of $TM$ by requiring that the Reeb fields be orthonormal, and $V \perp H$.  This metric depends only on the choice of $g$ and $\eta^i$, and in fact is given by $g\oplus \sum(\eta^i)^2$.  Further, from Theorem \ref{thm:biquardconn}, it is apparent that the extended metric is also parallel with respect to the connection.

Because we will be working with a carefully chosen frame in the later sections of the paper, it is now worthwhile to introduce some related notation.  For the remainder of this section we will work with a frame $\{ \xi_\alpha,\ R_i\}_{\alpha=1,\ldots,4n; i=1,2,3}$ where $\{\xi_\alpha\}$ is an $Sp(n)Sp(1)$ frame for $H$, and the $R_i$ are the three Reeb fields described above.  It is occasionally convenient to have a notation for the entire frame; therefore as necessary we may refer to $R_i$ as $\xi_{4n+i}$.  In order to have a consistent index notation we will use different letters for different ranges of indices.  In particular
\begin{gather*}
\alpha, \beta,\gamma,\ldots \in \{ 1, \ldots, 4n\},\\
i,j,k,\ldots \in \{1,2,3\}, \text{ and} \\
a,b,c, \ldots \in \{1,\ldots,4n+3\}.
\end{gather*}
For the dual basis we use the names $\theta^\alpha$ and $\eta^i$, and as above we may occasionally use  $\theta^{4n+i}=\eta^i$.  Further, throughout this paper we will obey the Einstein summation convention whenever possible.

Finally we note one last fact of importance, namely that both $H$ and $V$ are orientable.  The horizontal bundle is orientable since it admits an $Sp(n)Sp(1)\subset SO(4n)$ structure, and as noted above $Q$ has an $SO(3)$ structure, hence so does $V$.  Moreover, the natural volume form on $V$ is given by $\veps = \eta^1 \wedge \eta^2 \wedge \eta^3$, and this tensor provides a handy isomorphism between $V$ and $\bigwedge^2 V$ (or their duals).  Though we will not have much occasion to use them, we denote the volume form on $H$ by $\Omega$ and the volume form on $TM$ as $dv = \Omega \wedge \veps$.

Using the volume form $\veps^{ijk}$ and the metrics on $H$ and $V$, there is a convenient way to express composition of the almost complex structures. We will also require a few identities involving contractions of the volume form with itself, and so we list them here.
\begin{prop}\label{prop:ACSandVform}
The almost complex structures and volume form on $V$ satisfy the following identities:
\begin{gather}
\tensor{I}{\down i \up \alpha \down \gamma} \tensor{I}{\down j \up \gamma \down \beta} = -g_{ij}\delta^\alpha_\beta + \veps_{ijk}\tensor{I}{\up{k \alpha} \down \beta}; \label{eq:ACS}\\
\veps_{ijk}\veps^{ilm} = \delta_j^l \delta_k^m - \delta_k^l \delta_j^m,\quad \veps_{ijk}\veps^{ijl} = 2\delta_k^l. \label{eq:Vform}
\end{gather} 
\end{prop}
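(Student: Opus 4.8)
The proof of Proposition \ref{prop:ACSandVform} is a direct algebraic verification: the first identity is the quaternion multiplication law written in the chosen frame, and the second is the classical contraction identity for the three–dimensional alternating symbol. I will only indicate the mechanics.

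\textbf{The first identity.} I would first establish the coordinate-free composition law $I^i\circ I^j=-\delta_{ij}\,\mathrm{Id}_H+\veps_{ijk}I^k$ on $H$. When $i=j$ this is exactly the relation $(I^i)^2=-\mathrm{Id}$ from the definition of a QC manifold, since $\delta_{ii}=1$ (no sum) and $\veps_{iik}=0$. When $i\neq j$ I would extract the required products from $I^1I^2I^3=-\mathrm{Id}$: right–multiplying by $I^3$ and using $(I^3)^2=-\mathrm{Id}$ gives $I^1I^2=I^3$, and cyclic permutation gives $I^2I^3=I^1$ and $I^3I^1=I^2$; left–multiplying $I^1I^2=I^3$ by $I^1$ gives $I^2I^1=-I^3$, and similarly for the other transpositions. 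Together these say precisely $I^iI^j=\veps_{ijk}I^k$ for $i\neq j$, which completes the composition law. Passing to the $Sp(n)Sp(1)$ frame $\{\xi_\alpha\}$, the expression $\tensor{I}{\down i \up\alpha \down\gamma}\,\tensor{I}{\down j \up\gamma \down\beta}$ is by definition the $(\alpha,\beta)$ entry of the matrix of $I^i\circ I^j$; since the Reeb fields are declared orthonormal, $g_{ij}=\delta_{ij}$ on $V$, so raising the index $k$ with $g^{ij}$ is numerically trivial, and the composition law becomes \eqref{eq:ACS}. The only point to watch is the sign convention for $\veps_{123}$, which is pinned down by $\veps=\eta^1\wedge\eta^2\wedge\eta^3$.

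\textbf{The second identity.} These are the standard contractions of the three–dimensional alternating symbol. I would prove the first, $\veps_{ijk}\veps^{ilm}=\delta_j^l\delta_k^m-\delta_k^l\delta_j^m$, either from the determinant formula $\veps_{ijk}\veps^{lmn}=\det\bigl(\delta\bigr)$ expanded along the first row after contracting $i$ with $l$, or by checking the finitely many cases directly (both sides vanish unless $\{j,k\}=\{l,m\}$ as two-element sets, and the signs agree in the nontrivial cases). The second identity then follows by a further contraction: setting $m=j$ and summing gives $\veps_{ijk}\veps^{ijl}=\delta_j^j\delta_k^l-\delta_k^j\delta_j^l=3\delta_k^l-\delta_k^l=2\delta_k^l$. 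Again $g_{ij}=\delta_{ij}$ on $V$ ensures that $\veps^{ijk}$ coincides numerically with $\veps_{ijk}$, so the raised-index statements are the honest ones.

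\textbf{Expected difficulty.} There is no genuine obstacle here; the entire content is bookkeeping. The one place to be careful is the consistent handling of index raising and lowering on the $V$-indices by $g_{ij}$ (harmless because the chosen frame is orthonormal) and the orientation convention for $\veps$, so that \eqref{eq:ACS} and \eqref{eq:Vform} are stated with mutually compatible conventions and feed correctly into the later curvature and torsion computations.
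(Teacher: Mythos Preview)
Your proposal is correct and is exactly the natural direct verification; the paper itself states Proposition~\ref{prop:ACSandVform} without proof, treating both identities as standard algebraic facts, so your write-up simply supplies what the paper omits.
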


\subsubsection{Curvature and torsion identities} \label{sec:biquardcurv}

Because there are so many relations among the tensors defining a pseudohermitian structure, we can expect that there will be many relations between the curvature and torsion tensors of the Biquard connection.  In fact, we will see that there is a very close relation between the quaternionic contact torsion and the horizontal Ricci tensor.

We begin with the torsion.  The most basic identity it satisfies is for two vectors in $H$.  To wit, since $T(X,Y)\in V$ for $X,Y \in H$, 
\[ \tensor{T}{\up i \down{\alpha\beta}} = -\eta^i[\xi_\alpha,\xi_\beta] = d\eta^i(\xi_\alpha, \xi_\beta) = 2 g(I^i \xi_\alpha, \xi_\beta) = -2\tensor{I}{\up i \down{\alpha\beta}}, \text{ and } \tensor{T}{\up \alpha \down{\beta\gamma}} = 0.\]

Before we decompose the torsion tensor further, it is handy to introduce the Casimir operator $\Casimir:\End(H)\to\End(H)$ on a QC manifold, defined in \cite{Vassilevetal:2006}.  It is defined locally by
\[ \tensor{\Casimir}{\up {\alpha\gamma} \down {\beta\delta}} = \tensor{I}{\down i \up \alpha \down \beta} \tensor{I}{\up {i\gamma} \down\delta},\]
and satisfies the identity $\Casimir^2 = 2\Casimir + 3$.  It therefore has eigenvalues $3$ and $-1$.

The most interesting part of the torsion is the collection of endomorphisms \[T(R,\cdot)|_H:H\to H\] for a vector $R \in V$.  We summarize some results here.
\begin{prop}\label{prop:torsionprop}
The torsion tensor satisfies
\begin{itemize}
\item $\tensor{T}{\up i \down{\alpha\beta}} = -2\tensor{I}{\up i \down{\alpha\beta}}$;

\item $\tensor{T}{\up \alpha \down {i\alpha}} = 0= \tensor{T}{\up \alpha \down {i\beta}}\tensor{I}{\up {\beta} \down \alpha} $ for any $I\in Q$;

\item $\tensor{T}{\up \alpha\down {i\beta}} = \tensor{(T^0)}{\down i\up \alpha \down \beta} + \tensor{b}{\down i\up \alpha\down\beta}$ where $T^0$ is symmetric in the horizontal indices and $b$ is antisymmetric in the horizontal indices.

\item $\tensor{b}{\down i \up \alpha \down \beta} = \tensor{I}{\down i \up \alpha \down \gamma}\tensor{\mu}{\up \gamma \down \beta} = \tensor{\mu}{\up \alpha \down \gamma}\tensor{I}{\down i \up \gamma \down \beta}$ where $\mu$ is a symmetric, trace-free endomorphism of $H$ that commutes with any element of $Q$;

\item there is a symmetric, trace-free endomorphism of $H$ called $\tau$ such that \[\tensor{(T^0)}{\down i \up \alpha \down \beta} = \frac{1}{4}(\tensor{\tau}{\up \alpha \down \gamma} \tensor{I}{\down i \up \gamma \down \beta}  + \tensor{I}{\down i \up \alpha \down \gamma} \tensor{\tau}{\up \gamma \down \beta});\]

\item $\tensor{\Casimir}{\down {\alpha\beta}\up{\gamma\delta}} \mu_{\gamma_\delta} = 3\mu_{\alpha\beta}$ and $\tensor{\Casimir}{\down {\alpha\beta}\up{\gamma\delta}} \tau_{\gamma\delta} = -\tau_{\alpha\beta}$;

\item $\tensor{T}{\up \alpha \down {ij}} = d\theta^\alpha(R_i,R_j)$;

\item $\tensor{T}{\up k \down {ij}} = \lambda\, \tensor{\veps}{\down{ij}\up k}$ for a smooth function $\lambda$; and

\item $\tensor{T}{\up \alpha \down{\beta\gamma}} = \tensor{T}{\up i \down{j\alpha}}=0$.
\end{itemize}
\end{prop}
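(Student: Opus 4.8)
The plan is to read each identity off the five defining properties of the Biquard connection in Theorem~\ref{thm:biquardconn}, sorting the torsion $T$ by the types of its arguments and handing the ``soft'' pieces to the representation theory of $\End(H)$ under the $Sp(n)Sp(1)$ action.

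The horizontal--horizontal identities are immediate from the axioms. That $T(X,Y)\in V$ for $X,Y\in H$ is precisely $\tensor{T}{\up\alpha\down{\beta\gamma}}=0$, and since $\nabla$ preserves $H\oplus V$ while $\eta^i$ annihilates $H$, pairing $\eta^i$ with $T(\xi_\alpha,\xi_\beta)$ reproduces the computation already recorded above, $\tensor{T}{\up i\down{\alpha\beta}}=d\eta^i(\xi_\alpha,\xi_\beta)=2g(I^i\xi_\alpha,\xi_\beta)=-2\tensor{I}{\up i\down{\alpha\beta}}$. Two further identities come out of routine bookkeeping with the explicit description of $V$ as the span of the Reeb fields: expanding $T(R_j,\xi_\alpha)$ in terms of $\nabla$ and brackets and using $\nabla_{R_j}\xi_\alpha\in H$, $\eta^i(\nabla_{\xi_\alpha}R_j)=\tensor{\omega}{\down j\up i}(\xi_\alpha)=d\eta^i(R_j,\xi_\alpha)$, and $\eta^i[R_j,\xi_\alpha]=-d\eta^i(R_j,\xi_\alpha)$ (since $\eta^i(R_j)$ is constant), the remaining two contributions cancel and $\tensor{T}{\up i\down{j\alpha}}=0$; likewise $\theta^\alpha$ kills $\nabla_{R_i}R_j$ and $\nabla_{R_j}R_i$, so $\tensor{T}{\up\alpha\down{ij}}=-\theta^\alpha[R_i,R_j]=d\theta^\alpha(R_i,R_j)$.

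The substance lies in the mixed endomorphism $\psi_i:=T(R_i,\cdot)|_H\in\End(H)$. By the fourth property of Theorem~\ref{thm:biquardconn} it lies in $(\mathfrak{sp}(n)\oplus\mathfrak{sp}(1))^\perp$; in particular its pairing with $Q=\mathfrak{sp}(1)$ vanishes, which is the identity $\tensor{T}{\up\alpha\down{i\beta}}\tensor{I}{\up\beta\down\alpha}=0$. Decomposing $\psi_i$ along the $Sp(n)Sp(1)$-irreducible summands of $(\mathfrak{sp}(n)\oplus\mathfrak{sp}(1))^\perp$ gives the splitting $\tensor{T}{\up\alpha\down{i\beta}}=\tensor{(T^0)}{\down i\up\alpha\down\beta}+\tensor{b}{\down i\up\alpha\down\beta}$, and the Casimir $\Casimir$ is the natural tool to refine it: it commutes with the $Sp(n)Sp(1)$ action, has spectrum $\{3,-1\}$ by $\Casimir^2=2\Casimir+3$, and its $3$-eigenspace is exactly the endomorphisms commuting with all of $Q$. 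Thus the $\Casimir=3$ part is $Q$-commuting and provides $\mu$, the $\Casimir=-1$ part provides $\tau$, giving the statements $\tensor{\Casimir}{\down{\alpha\beta}\up{\gamma\delta}}\mu_{\gamma\delta}=3\mu_{\alpha\beta}$ and $\tensor{\Casimir}{\down{\alpha\beta}\up{\gamma\delta}}\tau_{\gamma\delta}=-\tau_{\alpha\beta}$, and feeding the eigenvalue conditions back through the composition identity~\eqref{eq:ACS} forces the normal forms $\tensor{b}{\down i\up\alpha\down\beta}=\tensor{I}{\down i\up\alpha\down\gamma}\tensor{\mu}{\up\gamma\down\beta}$ and $\tensor{(T^0)}{\down i\up\alpha\down\beta}=\tfrac14\bigl(\tensor{\tau}{\up\alpha\down\gamma}\tensor{I}{\down i\up\gamma\down\beta}+\tensor{I}{\down i\up\alpha\down\gamma}\tensor{\tau}{\up\gamma\down\beta}\bigr)$. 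Finally, for the vertical--vertical torsion, the $V$-valued part of $T(R_i,R_j)$ is antisymmetric in $ij$, so it is an element of $\End(Q)\cong\End(V)$; a direct computation with Biquard's normalization (or the first Bianchi identity) shows it must be a scalar multiple of the identity, i.e. $\tensor{T}{\up k\down{ij}}=\lambda\,\tensor{\veps}{\down{ij}\up k}$ for a function $\lambda$.

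The step I expect to be the real obstacle is collapsing the three endomorphisms $\psi_1,\psi_2,\psi_3$ onto the \emph{single} tensors $\tau$ and $\mu$ that appear in all three formulas, and extracting along the way that $\tau$ and $\mu$ are symmetric and trace-free (the trace-freeness not being forced by $\psi_i\in(\mathfrak{sp}(n)\oplus\mathfrak{sp}(1))^\perp$, which only kills the $\mathfrak{sp}(n)\oplus\mathfrak{sp}(1)$ component). A priori the eigenspace decomposition only produces, for each $i$, some $\tau_i$ and $\mu_i$; pinning down $\tau_i\equiv\tau$ and $\mu_i\equiv\mu$ needs compatibility relations among the $\psi_i$ (relating $\psi_j$ to $\psi_i$ via $I^i$ and $I^j$), which do not follow from the bare axioms but from the first Bianchi identity for $\nabla$ --- equivalently, from a closer reading of Biquard's normalization. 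Pushing the $\veps_{ijk}$'s and the quaternion multiplication table through that computation is delicate but mechanical; as it is carried out in detail in \cite{Vassilevetal:2006}, I would sketch it and quote the conclusion.
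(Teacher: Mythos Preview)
Your proposal is more than the paper itself provides: the paper's ``proof'' is the single sentence ``These properties are proved in \cite{Biquard:2000} and \cite{Vassilevetal:2006},'' with no argument given. Your sketch is essentially an outline of what those references contain, and in that sense is consistent with the paper's approach (namely, deferring to the cited sources); you even close by saying you would quote \cite{Vassilevetal:2006} for the delicate compatibility step, which is exactly what the paper does wholesale.

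One small point worth flagging: you correctly observe that the pairing of $\psi_i$ with $\mathfrak{sp}(1)$ gives $\tensor{T}{\up\alpha\down{i\beta}}\tensor{I}{\up\beta\down\alpha}=0$, but you do not explain where the \emph{metric} trace $\tensor{T}{\up\alpha\down{i\alpha}}=0$ comes from, and you later note yourself that trace-freeness is not forced by membership in $(\mathfrak{sp}(n)\oplus\mathfrak{sp}(1))^\perp$ (the identity lies in that perp). In the cited sources this follows from Biquard's explicit normalization of the symmetric part of the torsion endomorphism; since you are already pointing to \cite{Vassilevetal:2006} for the harder compatibility statement, it would be cleanest to absorb this point into the same citation rather than leave it unaddressed.
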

\begin{proof}
These properties are proved in \cite{Biquard:2000} and \cite{Vassilevetal:2006}.  
\end{proof}

\begin{cor}\label{cor:torsion}
The torsion tensor $\tensor{T}{\up \alpha\down {i\beta}}$ satisfies
\begin{equation}
\tensor{T}{\up \alpha\down {i\beta}} = \frac{1}{4}(\tensor{\tau}{\up \alpha \down \gamma} \tensor{I}{\down i \up \gamma \down \beta}  + \tensor{I}{\down i \up \alpha \down \gamma} \tensor{\tau}{\up \gamma \down \beta}) + \tensor{I}{\down i \up \alpha \down \gamma}\tensor{\mu}{\up \gamma \down \beta}.\label{eq:qctorsion}
\end{equation}
\end{cor}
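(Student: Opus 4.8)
The plan is to assemble the identity directly from the structural results already collected in Proposition~\ref{prop:torsionprop}, so the argument is essentially a matter of substitution. First I would invoke the decomposition $\tensor{T}{\up\alpha\down{i\beta}} = \tensor{(T^0)}{\down i\up\alpha\down\beta} + \tensor{b}{\down i\up\alpha\down\beta}$ given there, which splits the endomorphism $T(R_i,\cdot)|_H$ into its symmetric and antisymmetric parts in the horizontal indices. Next I would substitute the explicit formula for the symmetric part, $\tensor{(T^0)}{\down i\up\alpha\down\beta} = \tfrac14\big(\tensor{\tau}{\up\alpha\down\gamma}\tensor{I}{\down i\up\gamma\down\beta} + \tensor{I}{\down i\up\alpha\down\gamma}\tensor{\tau}{\up\gamma\down\beta}\big)$, where $\tau$ is the symmetric, trace-free endomorphism of $H$ produced by the proposition, together with the formula for the antisymmetric part, $\tensor{b}{\down i\up\alpha\down\beta} = \tensor{I}{\down i\up\alpha\down\gamma}\tensor{\mu}{\up\gamma\down\beta}$, with $\mu$ the symmetric, trace-free endomorphism commuting with all of $Q$. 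Adding these two expressions yields exactly the stated formula.

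The only delicate point — and it is a minor one — is consistency of conventions. Proposition~\ref{prop:torsionprop} lists two forms for $b$, namely $\tensor{I}{\down i\up\alpha\down\gamma}\tensor{\mu}{\up\gamma\down\beta}$ and $\tensor{\mu}{\up\alpha\down\gamma}\tensor{I}{\down i\up\gamma\down\beta}$; these coincide precisely because $\mu$ commutes with every almost complex structure, so either may be inserted in the final expression, and I would simply record the first. I would also double-check that the index positions in each term are the ones obtained by raising and lowering with the metric $g$ on $H$, but this is pure bookkeeping.

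In short, there is no real obstacle here: the corollary is an immediate consequence of combining the relevant bullet points of Proposition~\ref{prop:torsionprop}, whose proof is already contained in \cite{Biquard:2000} and \cite{Vassilevetal:2006}. The content of the statement is really just to package the torsion $\tensor{T}{\up\alpha\down{i\beta}}$ in the single closed form that will be convenient in the later sections of the paper.
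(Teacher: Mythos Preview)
Your proposal is correct and matches the paper's approach exactly: the corollary is stated without proof in the paper, as it follows immediately by substituting the expressions for $T^0$ and $b$ from Proposition~\ref{prop:torsionprop} into the decomposition $\tensor{T}{\up\alpha\down{i\beta}} = \tensor{(T^0)}{\down i\up\alpha\down\beta} + \tensor{b}{\down i\up\alpha\down\beta}$.
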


Throughout this paper we use the index convention $\tensor{R}{\down{abc}\up d} = \theta^d( R(\xi_a,\xi_b)\xi_c)$ for the curvature tensor.  The (horizontal) Ricci tensor is $Ric = R_{\alpha\beta} = \tensor{R}{\down{\gamma\alpha\beta}\up\gamma}$ and the scalar curvature is $S=\tensor{R}{\down \alpha\up \alpha}$.  We list here some properties of the various curvature tensors.

\begin{prop}\label{prop:curvprop}
Let $\lambda$, $\tau$, and $\mu$ be as defined in Proposition \ref{prop:torsionprop}.  Then the curvature tensor satisfies
\begin{gather}
R_{\alpha\beta}= (2n+2)\tau_{\alpha\beta} + 2(2n+5)\mu_{\alpha\beta} + \frac{S}{4n}g_{\alpha \beta}; \label{eq:ricci} \\
S= -8n(n+2)\lambda;  \label{eq:scalar}\\
0= \tensor{\tau}{\down {\alpha\beta,}\up \beta}  - 6 \tensor{\mu}{\down {\alpha\beta,}\up \beta} - \frac{4n-1}{2}\veps^{ijk}\tensor{T}{\up \beta \down {jk}}I_{i\beta\alpha} - \frac{3}{16n(n+2)}S_{,\alpha}; \label{eq:taumuS} \\
0= \tensor{\tau}{\down {\alpha\beta,}\up \beta}  -  \frac{n+2}{2}\veps^{ijk}\tensor{T}{\up \beta \down {jk}}I_{i\beta\alpha} - \frac{3}{16(n+2)}S_{,\alpha}; \label{eq:tormu}\\
0 = \tensor{\tau}{\down {\alpha\beta,}\up \beta}  - 3\tensor{\mu}{\down {\alpha\beta,}\up \beta}  +2\veps^{ijk}\tensor{T}{\up \beta \down {jk}}I_{i\beta\alpha} - \tensor{R}{\down{\gamma i \beta}\up\gamma}\tensor{I}{\up{i\beta}\down \alpha}. \label{eq:VHricci}
\end{gather}
\end{prop}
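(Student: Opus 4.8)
The plan is to derive all five identities from the first and second Bianchi identities for the Biquard connection $\nabla$, fed by the explicit description of the torsion in Proposition \ref{prop:torsionprop} and Corollary \ref{cor:torsion} and by the algebraic identities of Proposition \ref{prop:ACSandVform}. The structural fact that carries most of the load is that, since $\nabla$ preserves the $Sp(n)Sp(1)$-structure on $H$ (Theorem \ref{thm:biquardconn}), each curvature operator $R(\xi_a,\xi_b)$ lies in $\mathfrak{sp}(n)\oplus\mathfrak{sp}(1)\subset\End(H)$; consequently $R_{abcd}$ is skew in its last two indices, its $\mathfrak{sp}(1)$-part is a combination of the $I^i$, and --- decomposing symmetric trace-free endomorphisms of $H$ into the $\Casimir$-eigenspaces, the eigenvalue-$3$ piece ($\mu$-type) and the eigenvalue-$(-1)$ piece ($\tau$-type) --- every trace that can appear in a Bianchi computation is forced into a combination of $\tau$, $\mu$, $S$ and $\lambda$.

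For \eqref{eq:ricci} I would write the first Bianchi identity (with $\mathfrak{S}_{XYZ}$ the cyclic sum over $X,Y,Z$) on three horizontal vectors,
\[ \mathfrak{S}_{XYZ}\,R(X,Y)Z=\mathfrak{S}_{XYZ}\bigl[(\nabla_X T)(Y,Z)+T(T(X,Y),Z)\bigr], \]
and project onto $H$: since $T(X,Y)\in V$ for horizontal $X,Y$ the $\nabla T$ terms are vertical and drop out, and $T(\xi_\gamma,\xi_\alpha)=-2\tensor{I}{\up i\down{\gamma\alpha}}R_i$ turns the right-hand side into an explicit expression in the vertical-horizontal torsion $\tensor{T}{\up\alpha\down{i\beta}}$, hence, by Corollary \ref{cor:torsion}, in $\tau$ and $\mu$. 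Contracting the resulting four-index identity appropriately --- the $\mathfrak{sp}(n)\oplus\mathfrak{sp}(1)$-structure of $R$ and the identities $\tensor{T}{\up\alpha\down{i\alpha}}=0$, $\tensor{T}{\up\alpha\down{i\beta}}\tensor{I}{\up\beta\down\alpha}=0$ of Proposition \ref{prop:torsionprop} keeping only the wanted traces --- then reducing the $I^iI^i$ compositions via \eqref{eq:ACS} and using $\Casimir\mu=3\mu$, $\Casimir\tau=-\tau$, isolates $R_{\alpha\beta}$ and produces the stated combination with coefficients $2n+2$, $2(2n+5)$, $\tfrac{S}{4n}$. Identity \eqref{eq:scalar} is not a trace of \eqref{eq:ricci} (that would be tautological, $\tau$ and $\mu$ being trace-free); for it I would apply the first Bianchi identity to a triple containing a vertical vector, extract the $\mathfrak{sp}(1)$-component of the curvature, and use $\tensor{T}{\up k\down{ij}}=\lambda\tensor{\veps}{\down{ij}\up k}$ together with the $\veps$-identities \eqref{eq:Vform} to arrive at $S=-8n(n+2)\lambda$.

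For the three divergence identities I would invoke the second Bianchi identity with torsion,
\[ \mathfrak{S}_{XYZ}\bigl[(\nabla_X R)(Y,Z)+R(T(X,Y),Z)\bigr]=0, \]
on suitable triples of frame vectors. With all three inputs horizontal and the appropriate double contraction one gets the contracted second Bianchi identity, $\tensor{R}{\down{\alpha\beta,}\up\beta}=(\text{torsion coupling})$; substituting \eqref{eq:ricci} converts the left side into a combination of $\tensor{\tau}{\down{\alpha\beta,}\up\beta}$, $\tensor{\mu}{\down{\alpha\beta,}\up\beta}$ and $S_{,\alpha}$ and the coupling into the $\veps^{ijk}\tensor{T}{\up\beta\down{jk}}I_{i\beta\alpha}$ term, which is \eqref{eq:taumuS}. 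Running the same computation on a triple containing one Reeb field $R_i$ and contracting produces the companion relation in which the curvature contribution is the mixed Ricci term $\tensor{R}{\down{\gamma i\beta}\up\gamma}\tensor{I}{\up{i\beta}\down\alpha}$, namely \eqref{eq:VHricci}. Finally \eqref{eq:tormu} follows either by eliminating $\tensor{\mu}{\down{\alpha\beta,}\up\beta}$ between \eqref{eq:taumuS} and \eqref{eq:VHricci}, once the mixed Ricci term in the latter is itself rewritten via \eqref{eq:ricci}, or by a parallel contraction of the second Bianchi identity with the vertical input in a different slot.

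The only real obstacle is bookkeeping: each step is a cyclic sum followed by one or two contractions of the three almost complex structures and the volume symbol $\veps$, interleaved with splitting symmetric trace-free tensors into their $\Casimir$-eigencomponents, and one sign slip in an $I^iI^j$ reduction corrupts the final coefficients. Conceptually nothing is needed beyond choosing, for each identity, the contraction under which the unwanted traces vanish --- the vanishings $\tensor{T}{\up\alpha\down{i\alpha}}=0$ and $\tensor{T}{\up\alpha\down{i\beta}}\tensor{I}{\up\beta\down\alpha}=0$ of Proposition \ref{prop:torsionprop} are what make this possible --- so the argument is a careful but routine expansion, which is why these identities are customarily attributed directly to \cite{Biquard:2000} and \cite{Vassilevetal:2006}.
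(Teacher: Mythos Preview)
Your outline is sound and matches in spirit what the paper does: the paper gives no argument at all and simply cites \cite{Vassilevetal:2006} (Theorem~3.12 for \eqref{eq:ricci}--\eqref{eq:scalar}, Theorem~4.8 for \eqref{eq:taumuS}--\eqref{eq:VHricci}), and your sketch is essentially a summary of the Bianchi-identity computations carried out there. So in that sense you are in agreement with the paper, just more explicit.

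One small caution on a detail. Your first suggested route to \eqref{eq:tormu}---eliminate $\tensor{\mu}{\down{\alpha\beta,}\up\beta}$ between \eqref{eq:taumuS} and \eqref{eq:VHricci} after rewriting the mixed Ricci term via \eqref{eq:ricci}---does not quite work as stated: \eqref{eq:ricci} concerns the purely horizontal Ricci tensor $R_{\alpha\beta}=\tensor{R}{\down{\gamma\alpha\beta}\up\gamma}$, not the mixed component $\tensor{R}{\down{\gamma i\beta}\up\gamma}$, so you cannot feed one into the other. Your alternative (``a parallel contraction of the second Bianchi identity with the vertical input in a different slot'') is the correct path, and is in fact how the three divergence identities arise in \cite{Vassilevetal:2006}: each of \eqref{eq:taumuS}, \eqref{eq:tormu}, \eqref{eq:VHricci} comes from a separate contraction, not from algebraic elimination among themselves.
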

\begin{proof}
These are all proved in \cite{Vassilevetal:2006}.  Equations \eqref{eq:ricci} and \eqref{eq:scalar} follow from Theorem 3.12, while equations \eqref{eq:taumuS}, \eqref{eq:tormu} and \eqref{eq:VHricci} follow from Theorem 4.8.
\end{proof}

\subsubsection{Conformal changes of QC pseudohermitian structure}\label{sec:confchange}

In the definition of a quaternionic contact manifold is the built-in possibility of a conformal change of structure.  By this we mean that there is a natural way to construct a new QC pseudohermitian structure from a given one, namely by multiplying the $\eta^i$ and the metric $g$ by a smooth positive function.  In particular, let $u \in C^\infty(M)$ and define $\tilde{\eta}^i = e^{2u}\eta^i$ and $\tilde g = e^{2u}g$.  Then $\bigcap_i \ker \tilde{\eta}^i =H$ and 
\[ d\tilde{\eta}^i = 2e^{2u}du\wedge \eta^i + e^{2u}d\eta^i,\]
from which we see that, restricted to $H$, \[d\tilde{\eta}^i(X,Y) = e^{2u}d\eta^i(X,Y) = 2e^{2u}g(I^iX,Y) = 2\tilde{g}(I^iX, Y).\]

From this conformal change, a whole cascade of changes takes place in the connection, torsion and curvature, which we record here for future use.
\begin{prop}\label{prop:confchange} Let $M$ be a QC manifold with pseudohermitian structure $\eta$.  Let $u\in C^\infty(M)$ and define $\tilde{\eta} = e^{2u}\eta$.  Then the Reeb fields for the new structure are given by 
\begin{equation*} \tilde{R}_i = e^{-2u}(R_i -  \tensor{I}{\down i \up \alpha \down \beta} u^\beta \xi_\alpha).\end{equation*} If we define $\tilde{\xi}_\alpha = \xi_\alpha$ and $\tilde{\theta}^\alpha = \theta^\alpha + \tensor{I}{\down i \up \alpha \down \beta}u^\beta \eta^i$, then $\tilde{\theta}^\alpha(\tilde{R}_i)=0$ and $\tilde{\eta}^i(\tilde\xi_\alpha)=0$.
Let $P_{-1}$ and $P_3$ denote the projections onto the $(-1)$- and $(3)$-eigenspaces of $\Casimir$.  Then the torsion and curvature tensors change as follows:
\begin{gather}
\tilde{\tau}_{\alpha\beta} =  \tau_{\alpha\beta} + P_{-1}(4u_\alpha u_\beta - 2u_{\alpha\beta}),\label{eq:tauchange}\\
\tilde{\mu}_{\alpha\beta} = \mu_{\alpha\beta} + P_3(-2u_\alpha u_\beta - u_{\alpha\beta}), \label{eq:muchange}\\
\tilde S \tilde{g}_{\alpha\beta} =  S g_{\alpha\beta}  - 16(n+1)(n+2) u_\gamma u^\gamma g_{\alpha\beta} - 8(n+2)\tensor{u}{\down \gamma \up \gamma}g_{\alpha\beta}.\label{eq:scalarchange}
\end{gather}
\end{prop}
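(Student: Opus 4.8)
The plan is to verify the frame formulas by a direct computation and then obtain the torsion and curvature transformations by first computing how the Biquard connection itself changes, in the style of \cite{Vassilevetal:2006}.

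First I would pin down the Reeb fields. Taking $\tilde R_i := e^{-2u}(R_i - \tensor{I}{\down i \up\alpha\down\beta}u^\beta\xi_\alpha)$ and $\tilde\theta^\alpha := \theta^\alpha + \tensor{I}{\down i \up\alpha\down\beta}u^\beta\eta^i$, the horizontal correction lies in $H=\ker\eta^i$, so $\tilde\eta^i(\tilde R_j)=e^{2u}\eta^i(\tilde R_j)=\delta^i_j$ is immediate, and the two pairing identities $\tilde\theta^\alpha(\tilde R_i)=0$, $\tilde\eta^i(\tilde\xi_\alpha)=0$ are one-line checks. The substantive point is $\tilde R_i\lrcorner d\tilde\eta^i|_H=0$ (no sum on $i$): plug $d\tilde\eta^i = 2e^{2u}\,du\wedge\eta^i + e^{2u}\,d\eta^i$ into the left side, use $d\eta^i(R_i,\cdot)|_H=0$, the defining relation $d\eta^i(X,Y)=2g(I^iX,Y)$, and $(I^i)^2=-\mathrm{Id}$ (i.e. $\tensor{I}{\down i\up\gamma\down\delta}\tensor{I}{\down i\up\beta\down\gamma}=-\delta^\beta_\delta$); the $du\wedge\eta^i$ contribution and the $d\eta^i$ contribution then cancel exactly. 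Since Theorem \ref{thm:biquardconn} guarantees the Reeb fields are unique, these $\tilde R_i$ are the correct ones.

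Next, the core step: the change of connection. I would write $\tilde\nabla=\nabla+A$ for a $(1,2)$-tensor $A$ and determine $A$ from the characterizing properties of the Biquard connection applied to $\tilde\nabla$. Compatibility of $\tilde\nabla$ with $\tilde g=e^{2u}g$ on $H$ forces the symmetric part of the horizontal-horizontal block of $A$ to be the usual conformal combination of $du\otimes g$-type terms, just as in the Koszul computation for a conformal Riemannian change; the condition $T(X,Y)\in V$ for horizontal $X,Y$ fixes its skew part; and the condition that $T(R,\cdot)|_H\in(\mathfrak{sp}(n)\oplus\mathfrak{sp}(1))^\perp$ for $R\in V$, together with preservation of the splitting $TM=H\oplus\tilde V$ and of $Q$, determines the mixed horizontal/vertical blocks of $A$ in terms of $du$ and its horizontal Hessian $u_{\alpha\beta}$ (the second covariant derivative with respect to the original Biquard connection, which is not symmetric, its skew part being $\sim \tensor{I}{\down i}{}_{\alpha\beta}R_i(u)$). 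Carrying this out in the frame $\{\xi_\alpha,\tilde R_i\}$ yields explicit formulas for $A$.

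Finally I would feed $A$ into $\tilde T(X,Y)=T(X,Y)+A(X,Y)-A(Y,X)$ and into the curvature change formula $\tilde R=R+(\text{terms in }\nabla A\text{ and }A\wedge A)$. Evaluating $\tilde T(\tilde R_i,\xi_\beta)$ and comparing with Corollary \ref{cor:torsion}, which expresses this torsion through $\tilde\tau$ (the $(-1)$-eigencomponent of $\Casimir$) and $\tilde\mu$ (the $3$-eigencomponent), I would apply the projections $P_{-1}$, $P_3$ to read off \eqref{eq:tauchange} and \eqref{eq:muchange}; the quadratic terms $u_\alpha u_\beta$ arise from the $A\wedge A$-type contributions and the Hessian terms from $\nabla A$, and Proposition \ref{prop:ACSandVform} is needed to reduce the resulting products of the $I^i$. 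For \eqref{eq:scalarchange} I would trace the Ricci change — or, equivalently, compute the purely vertical torsion $\tilde T^k{}_{ij}=\tilde\lambda\,\tensor{\veps}{\down{ij}\up k}$ directly from the Lie bracket of the $\tilde R_i$ obtained above — and then use $S=-8n(n+2)\lambda$ from \eqref{eq:scalar}; the fact that both sides of \eqref{eq:scalarchange} are pure multiples of $g_{\alpha\beta}$ is automatic from \eqref{eq:ricci}, so only the scalar content needs to be computed.

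I expect the middle step to be the main obstacle: correctly isolating the mixed horizontal/vertical components of $A$, which are controlled by the delicate requirement that $T(R,\cdot)|_H$ lie in $(\mathfrak{sp}(n)\oplus\mathfrak{sp}(1))^\perp$ rather than by metric compatibility alone, and then cleanly separating $\tilde\tau$ from $\tilde\mu$ under the $\Casimir$-projections without sign errors in the algebra of the $I^i$ and $\veps^{ijk}$, all while keeping careful track of the non-symmetry of the horizontal Hessian $u_{\alpha\beta}$ and of the fact that the frame $\xi_\alpha$ is no longer orthonormal for $\tilde g$.
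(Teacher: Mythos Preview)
Your plan is sound and is essentially what one must do to establish the result from scratch. However, you should be aware that the paper itself does not prove this proposition: its entire proof consists of a citation to \cite{Vassilevetal:2006}, specifically equations (5.11), (5.12) and (5.14) there, after the substitution $h=\tfrac{1}{2}e^{-2u}$. So the ``paper's approach'' is simply to invoke the computation already carried out in that reference.

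That said, the computation you outline is exactly the one underlying those cited formulas: determine the difference tensor $A=\tilde\nabla-\nabla$ from the defining properties of the Biquard connection, then read off $\tilde\tau$, $\tilde\mu$, $\tilde S$ from the induced changes in torsion and curvature. Your identification of the delicate point---isolating the mixed horizontal/vertical block of $A$ via the $(\mathfrak{sp}(n)\oplus\mathfrak{sp}(1))^\perp$ condition, and then separating the $\Casimir$-eigencomponents---is accurate; this is precisely where the bulk of the algebra in \cite{Vassilevetal:2006} lies. One small efficiency: for \eqref{eq:scalarchange} your second suggestion (compute $\tilde\lambda$ directly from $[\tilde R_i,\tilde R_j]$ and use $S=-8n(n+2)\lambda$) is cleaner than tracing the full Ricci change, since you already have explicit formulas for the $\tilde R_i$.
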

\begin{proof}  These are proved in \cite{Vassilevetal:2006}, equations (5.11), (5.12) and (5.14) with the obvious change $h = \frac{1}{2}e^{-2u}$.
\end{proof}

\subsubsection{The quaternionic contact conformal curvature tensor} \label{sec:qcconfcurv}

In conformal geometry, the obstruction to conformal flatness is the well-studied Weyl tensor, the portion of the curvature tensor that is invariant under a conformal change of metric.  It is this tensor, or rather its vanishing, that determines if a conformal manifold is locally conformally equivalent to the standard sphere.

Likewise, in the CR case, the tensor which determines local CR equivalence to the CR sphere is the Chern tensor, also determined by the curvature of the Tanaka-Webster connection.  And just as in the conformal case, it is the key to finding the  appropriate bound for the CR Yamabe invariant on a CR manifold.

Something similar appears in the QC case, dubbed the quaternionic contact conformal curvature by Stefan Ivanov and Dimiter Vassilev in \cite{Vassilevetal:2007}.  In their paper they define a tensor $W^{qc}$ (which we will hereafter refer to as simply $W$) and prove that it is the conformally invariant portion of the Biquard curvature tensor.  Moreover, if it vanishes, they prove that the QC manifold is locally QC equivalent to the quaternionic Heisenberg group.  Since the quaternionic Heisenberg group and the QC sphere are locally equivalent, this tensor clearly plays the role of the Weyl or Chern tensors.  

In the conformal setting, for a given metric, the full Riemann curvature tensor can be expressed in terms of the Weyl tensor and the Ricci curvature.  Analogously, in the  QC case, the horizontal Riemann curvature tensor is described in terms of $W$ and a tensor $L$ which is itself determined by the horizontal Ricci tensor.  In particular,
\begin{equation} \label{eq:defineL} L_{\alpha\beta} =  \frac{1}{2} \tau_{\alpha\beta} + \mu_{\alpha\beta} + \frac{S}{32n(n+2)} g_{\alpha\beta},\end{equation}
where $\tau$ and $\mu$ are the tensors described in propositions \ref{prop:torsionprop} and \ref{prop:curvprop}.  Since $\tau$, $\mu$ and $S$ may be recovered from $L$ using the Casimir operator and the metric, their vanishing is equivalent to the vanishing of $L$.

Equation (4.8) of \cite{Vassilevetal:2007} expresses $W$ in terms of the curvature tensor and the tensor $L$ as
\begin{multline} \label{eq:QCconfcurv}
W_{\alpha\beta\gamma\delta} = R_{\alpha\beta\gamma\delta} + g_{\alpha\gamma}L_{\beta\delta} - g_{\alpha\delta}L_{\beta\gamma} + g_{\beta\delta}L_{\alpha\gamma} - g_{\beta\gamma}L_{\alpha\delta} \\
+ I_{i\alpha\gamma}L_{\beta\rho}\tensor{I}{\up{i\rho}\down\delta} - I_{i\alpha\delta}L_{\beta\rho}\tensor{I}{\up{i\rho}\down\gamma} + I_{i\beta\delta}L_{\alpha\rho}\tensor{I}{\up{i\rho}\down\gamma} - I_{i\beta\gamma}L_{\alpha\rho}\tensor{I}{\up{i\rho}\down\delta}\\
+ \frac{1}{2}( I_{i\alpha\beta}L_{\gamma\rho}\tensor{I}{\up{i\rho}\down \delta} - I_{i\alpha\beta}L_{\rho\delta}\tensor{I}{\up{i\rho}\down\gamma} + I_{i\alpha\beta}L_{\rho\sigma}\tensor{I}{\down j \up \rho \down \gamma} \tensor{I}{\down k \up \sigma \down \delta} \veps^{ijk}) \\
+ I_{i\gamma\delta}L_{\alpha\rho}\tensor{I}{\up{i\rho}\down \beta} - I_{i\gamma\delta}L_{\rho\beta}\tensor{I}{\up{i\rho}\down\alpha} + \frac{1}{2n} \tensor{L}{\down \rho \up \rho} I_{i\alpha\beta}\tensor{I}{\up i \down{\gamma\delta}}.
\end{multline}
Therefore, the QC conformal curvature equals the horizontal curvature tensor precisely when the tensor $L$ vanishes.

\subsection{The quaternionic Heisenberg group} \label{sec:quatheis}

We close this section with a brief review of the quaternionic Heisenberg group, a non-compact $4n+3$ manifold with a QC structure.  Let $\HH$ denote the quaternion algebra and $\HH^n$ the right $\HH$-module of $n$-tuples of quaternions.  Then the quaternionic Heisenberg group, $\QH^n$, is diffeomorphic to $\HH^n\oplus \Im \HH$ with group law
\[ (p_1,\omega_1)\cdot (p_2,\omega_2) = \big(p_1+p_2, \omega_1 + \omega_2 + 2\Im (p_1,p_2)\big),\]
where $(p_1,p_2) = \sum_{i=1}^n p_1^i (p_2^i)^\ast$ is the standard hermitian inner product on $\HH^n$.

Writing $p=(p^\alpha) = (w^\alpha+x^\alpha i + y^\alpha j + z^\alpha k)$ and $\omega = ri + sj+tk$, the left invariant $1$-forms on $\QH^n$ are
\begin{gather*}
dw^\alpha,\ dx^\alpha,\ dy^\alpha,\ dz^\alpha,\\
\eta^1 = \frac{1}{2}dr - \sum_\alpha x^\alpha dw^\alpha - w^\alpha dx^\alpha + z^\alpha dy^\alpha - y^\alpha dz^\alpha,\\
\eta^2 =  \frac{1}{2} ds - \sum_\alpha y^\alpha dw^\alpha - z^\alpha dx^\alpha - w^\alpha dy^\alpha + x^\alpha dz^\alpha,\\
\eta^3 = \frac{1}{2}dt - \sum_\alpha z^\alpha dw^\alpha + y^\alpha dx^\alpha - x^\alpha dy^\alpha - w^\alpha dz^\alpha.
\end{gather*}
Dual to these are the left invariant vector fields
\begin{gather*}
W_\alpha = \parfrac{w^\alpha} + 2\sum_\alpha x^\alpha \parfrac{r} + y^\alpha \parfrac{s} + z^\alpha \parfrac{t},\\
X_\alpha = \parfrac{x^\alpha} + 2\sum_\alpha -w^\alpha \parfrac{r} - z^\alpha \parfrac{s} + y^\alpha \parfrac{t},\\
Y_\alpha = \parfrac{y^\alpha} + 2\sum_\alpha z^\alpha \parfrac{r} - w^\alpha \parfrac{s} - x^\alpha \parfrac{t},\\
Z_\alpha = \parfrac{z^\alpha} + 2\sum_\alpha -y^\alpha \parfrac{r} + x^\alpha \parfrac{s} - w^\alpha \parfrac{t},\\
 R = 2 \parfrac{r}, \ S= 2 \parfrac{s}, \  T=2 \parfrac{t}. 
\end{gather*}
The QC structure on $\QH^n$ is given by declaring the left invariant vector fields above to be orthonormal and using the given $\eta^i$ as the contact forms.
The almost complex structures are then given by
\begin{gather*}
I^1: W_\alpha\mapsto X_\alpha,\quad X_\alpha \mapsto -W_\alpha, \quad Y_\alpha \mapsto Z_\alpha, \quad Z_\alpha \mapsto -Y_\alpha;\\
I^2: W_\alpha \mapsto Y_\alpha, \quad X_\alpha \mapsto -Z_\alpha, \quad Y_\alpha \mapsto -W_\alpha, \quad Z_\alpha \mapsto X_\alpha;\\
I^3=I^1\circ I^2.
\end{gather*}

The horizontal and vertical subbundles of $T\QH^n$ are given by \[\mathfrak{h} = Span\{W_\alpha,\ X_\alpha,\ Y_\alpha,\ Z_\alpha\} \text{ and } \mathfrak{v}=Span\{R,\ S,\ T\}\] and of course $T\QH^n = \mathfrak{h} \oplus \mathfrak{v}$.  The Biquard connection is given by declaring these left invariant vector fields to be parallel, and so the flat model of QC geometry is exactly the quaternionic Heisenberg group.  Finally we note the important fact that parabolic dilations $\delta_a(X,R) = (aX, a^2R)$ for $a\in \RR$ are automorphisms for the Lie group $\QH$ and hence also its Lie algebra.
\section{Coordinate constructions} \label{sec:coords}

In this section we construct a version of normal coordinates that are adapted to QC geometry in much the same way that standard normal coordinates are adapted to the study of Riemannian geometry.

\subsection{Parabolic normal coordinates}

We begin with a general theorem that constructs ``parabolic geodesics''; that is, curves that satisfy an invariant differential equation with a parabolic-type scaling.  By way of motivation, recall that $\RR^n$, and also its tangent spaces at every point, come equipped with a natural dilation that sends a vector $v$ to $sv$, for any real scalar $s$.  If we consider these vectors to be based at the origin $0\in\RR^n$, then by moving from $0$ in the direction of the vector $v$ for time $s$ we arrive at the standard parametrization of a line, $s\mapsto sv$.  Further, it is a simple matter to see that this line is uniquely determined by the initial value problem,
\[ \ddot{\gamma}=0, \quad \gamma(0)=0, \quad \dot{\gamma}(0)=v.\]
Using this as a guide, a geodesic on a manifold with linear connection is a curve satisfying the following initial value problem for a fixed $X\in T_qM$,
\[ D_t \dot\gamma_X = 0, \quad \gamma_X(0)=q \in M, \quad \dot\gamma_X(0)=X \in T_qM,\]
where for any curve $\gamma$ on $M$ and any vector field $X$ along $\gamma$, we let $D_t X$ denote the covariant derivative of $X$ along $\gamma$.    Notice that the standard dilations on $\RR^n$ interact naturally with a parametrized geodesic by
\[ \gamma_{sX}(t) = \gamma_X(st).\]  Further, these dilations are Lie algebra homomophisms for the commutative Lie algebra structure on $\RR^n$.

Just as the model for Riemannian geometry is $\RR^n$, the model for quaternionic contact geometry is the quaternionic Heisenberg group, described in section \ref{sec:quatheis}.  The quaternionic Heisenberg group has a  family of parabolic dilations $(x,t)\mapsto (sx,s^2t).$  If we start from a point  $0\in\QH^n$ and travel along the curve $s\mapsto (sv,s^2a)$, we trace out a parabola.  This serves as a guide for our notion of parabolic geodesics on a QC manifold.

As with a line, there is a simple expression for  a parabola in terms of a differential equation, namely
\[ \dddot{\gamma}_{(v,a)} = 0, \quad \gamma_{(v,a)}(0)=0, \quad \dot{\gamma}_{(v,a)}(0)=v, \quad \ddot{\gamma}_{(v,a)}(0) = a.\]
Extending this notion to a manifold with a linear connection produces curves that can rightly be called \emph{parabolic geodesics}, i.e. that satisfy a natural parabolic scaling \[\gamma_{(sv,s^2a)}(t)=\gamma_{(v,a)}(st).\]   By appropriately restricting our initial conditions, we can show that there is a parabolic version of the geodesic exponential map called the \emph{parabolic exponential map}.

The following theorem carries out this procedure.  It represents a generalization and improvement of the argument given by Jerison and Lee in \cite[Theorem 2.1]{JerisonLee:1989}, which is specific to strictly pseudoconvex CR manifolds.  In particular, Theorem \ref{thm:coordinates} requires no assumptions on the manifold except a direct sum decomposition of the tangent bundle by two complementary distributions, which is satisfied by both CR and QC manifolds.  In fact this theorem also generalizes the proof of the existence of geodesics on a manifold with connection, simply by assuming that the bundle $V$ is the zero section of $TM$.

\begin{thm} \label{thm:coordinates}
Let $(M,\nabla)$ be a manifold with connection whose tangent bundle decomposes as the direct sum of two  distributions, $H$ and $V$.  Choose any $q\in M$, and let $(X,Y)\in H_q \oplus V_q = T_q M$ be any tangent vector.  Define $\gamma_{(X,Y)}$ to be the curve beginning at $q$ satisfying
\begin{equation}\label{eq:curveeq}
D_t^2 {\dot\gamma}_{(X,Y)} = 0,\quad  \gamma_{(X,Y)}(0)=q, \quad {\dot\gamma}_{(X,Y)}(0)=X, \text{ and } D_t {\dot\gamma}_{(X,Y)}(0) = Y.
\end{equation}
Then there are neighborhoods $0\in \mathcal{O}\subset T_qM$ and $q\in \mathcal{O}_M \subset M$ so that the function $\Psi : \mathcal{O} \to \mathcal{O}_M : (X,Y)\mapsto \gamma_{(X,Y)}(1)$  is a diffeomorphism, and satisfies the parabolic scaling $\Psi(tX,t^2Y)=\gamma_{(X,Y)}(t)$ wherever either side is defined.
\end{thm}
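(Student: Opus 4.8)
The plan is to mimic the classical construction of the Riemannian exponential map via the flow of a geodesic spray, but lifted to accommodate the third-order ODE \eqref{eq:curveeq}. First I would reformulate \eqref{eq:curveeq} as a first-order autonomous system on an appropriate bundle. Working in a coordinate chart about $q$ with local frame, the covariant derivative $D_t$ applied twice to $\dot\gamma$ produces an expression involving $\gamma$, $\dot\gamma$, $\ddot\gamma$ and $\dddot\gamma$, with the top-order term $\dddot\gamma$ entering linearly with invertible (identity) coefficient, since the connection coefficients only contribute lower-order terms. Hence \eqref{eq:curveeq} is equivalent to a system of the form $\dddot\gamma = F(\gamma,\dot\gamma,\ddot\gamma)$ with $F$ smooth, which I would convert to a first-order system for the triple $(\gamma,\dot\gamma,\ddot\gamma)$ on an open subset of $TM\times_M TM$ (or just in local coordinates, $\RR^{3n}$). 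By the standard existence, uniqueness and smooth-dependence theorem for ODEs, there is a smooth local flow; in particular, given $q$ and $(X,Y)\in T_qM$, the initial data $\gamma(0)=q$, $\dot\gamma(0)=X$, $D_t\dot\gamma(0)=Y$ (which fixes $\ddot\gamma(0)$ in terms of $X,Y$ and the connection coefficients at $q$) determine a unique solution $\gamma_{(X,Y)}$ on a maximal interval, depending smoothly on $(X,Y)$.

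Next I would establish the parabolic scaling identity $\gamma_{(sX,s^2Y)}(t)=\gamma_{(X,Y)}(st)$. Fix $s\neq 0$ and set $\sigma(t):=\gamma_{(X,Y)}(st)$. A direct computation using the chain rule and the tensoriality of $D_t$ gives $\dot\sigma(t)=s\,\dot\gamma_{(X,Y)}(st)$, $D_t\dot\sigma(t)=s^2\,(D_t\dot\gamma_{(X,Y)})(st)$, and $D_t^2\dot\sigma(t)=s^3\,(D_t^2\dot\gamma_{(X,Y)})(st)=0$. Thus $\sigma$ satisfies the same differential equation with $\sigma(0)=q$, $\dot\sigma(0)=sX$, $D_t\dot\sigma(0)=s^2Y$, so by uniqueness $\sigma=\gamma_{(sX,s^2Y)}$, which is exactly the claim; the case $s=0$ is the constant curve. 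Evaluating at $t=1$ and replacing $s$ by $t$ yields $\Psi(tX,t^2Y)=\gamma_{(X,Y)}(t)$ on the set where either side makes sense. As a byproduct, this scaling shows that if $(X,Y)$ is small enough then $\gamma_{(X,Y)}$ is defined on $[0,1]$: by compactness there is a neighborhood of the zero section on which the flow exists up to time $1$, and rescaling extends this to all sufficiently small initial data. This identifies the domain $\mathcal{O}\subset T_qM$ on which $\Psi(X,Y)=\gamma_{(X,Y)}(1)$ is defined and smooth.

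It remains to show $\Psi$ is a local diffeomorphism near $0$, which by the inverse function theorem reduces to computing $d\Psi_0$. Here the parabolic scaling does the work: differentiating $\Psi(tX,t^2Y)=\gamma_{(X,Y)}(t)$ in $t$ at $t=0$ gives, for the $H$-component, $d\Psi_0(X,0)=\dot\gamma_{(X,Y)}(0)=X$, and one further differentiation (picking up the $t^2$ factor on $Y$) gives $d\Psi_0(0,Y)=\tfrac12 D_t\dot\gamma_{(X,Y)}(0)$ up to the constant, i.e. $d\Psi_0$ is, in the splitting $T_qM=H_q\oplus V_q$, a block-triangular (in fact essentially identity, up to a factor of $\tfrac12$ on the $V$ block) linear isomorphism. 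Hence $d\Psi_0$ is invertible and $\Psi$ restricts to a diffeomorphism from a possibly smaller neighborhood $\mathcal{O}$ of $0$ onto a neighborhood $\mathcal{O}_M$ of $q$.

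The main obstacle I anticipate is not any single hard estimate but bookkeeping: carefully verifying that the second covariant derivative $D_t^2\dot\gamma$ really has the nondegenerate top-order structure claimed (so that \eqref{eq:curveeq} is a genuine third-order ODE solvable for $\dddot\gamma$), and then computing $d\Psi_0$ cleanly — the scaling argument is the slick route, but one must be careful that the two successive $t$-derivatives at $t=0$ genuinely recover both blocks of the differential rather than only the $H$ block. Making the domain statement precise (that $\mathcal{O}$ can be taken of the product-neighborhood-of-$0$ form and that the scaling relation holds "wherever either side is defined") also requires a short compactness argument of the usual Riemannian-exponential-map type.
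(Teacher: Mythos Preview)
Your proposal is correct and follows essentially the same route as the paper: write \eqref{eq:curveeq} in coordinates as a nondegenerate third-order ODE (the paper does this explicitly and then packages it as a flow on $TM\oplus TM$), derive the scaling $\gamma_{(aX,a^2Y)}(t)=\gamma_{(X,Y)}(at)$ by reparametrization and uniqueness, and conclude by showing $d\Psi_0$ is invertible. The only point of divergence is the $V$-block of $d\Psi_0$, which you propose to extract from a second $t$-derivative of $\Psi(tX,t^2Y)$; this works once you set $X=0$ so the Hessian term drops out, but the paper instead observes that for $Y\in V_q$ the reparametrized ordinary geodesic $\beta(s)=\alpha(s^2t)$ (with $\alpha$ the $\nabla$-geodesic from $q$ with $\dot\alpha(0)=Y$) satisfies $D_s^2\dot\beta=0$ with the right initial data, giving $\Psi(0,tY)=\alpha(t/2)$ and hence $d\Psi_0(0,Y)=\tfrac12 Y$ directly---this sidesteps exactly the bookkeeping concern you flagged.
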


\begin{proof}
First, note that for any two tangent vectors $X$ and $Y$, there is a unique smooth curve satisfying \eqref{eq:curveeq}.  This follows from the standard existence and uniqueness results on systems of ODEs applied to the coordinate form of the equation.  In particular, in any local coordinates $\{x^i\}$, centered at $q$, we let $\Gamma_{ij}^k = dx^k(\nabla_{\partial_i}\partial_j)$ be the Christoffel symbols of $\nabla$.  Then
\begin{equation}\label{eq:coordparabola}
(D_t)^2 \dot\gamma = \left( \dddot{\gamma}^k + \ddot{\gamma}^i\dot{\gamma}^j \Gamma_{ij}^k + 2 \dot{\gamma}^i \ddot{\gamma}^j \Gamma_{ij}^k + \dot{\gamma}^i\dot{\gamma}^j\dot{\gamma}^l \partial_l \Gamma_{ij}^k + \dot{\gamma}^i\dot{\gamma}^l\dot{\gamma}^m\Gamma_{lm}^j\Gamma_{ij}^k\right)\partial_k,
\end{equation}
so \eqref{eq:curveeq} is a third order, nonlinear system of ODEs with smooth coefficients.

Now for any $a\in \RR$, define $\sigma(t) = \gamma_{(X,Y)}(at)$.  Then $\dot{\sigma}(t) = a \dot{\gamma}(at)$, $D_t \dot\sigma = a^2 D_t\dot\gamma$ and $(D_t)^2\dot\sigma = a^3 (D_t)^2\dot{\gamma}=0$.  Thus, by uniqueness of solutions, $\sigma(t) = \gamma_{(X,Y)}(at) = \gamma_{(aX,a^2Y)}(t)$, which shows the parabolic scaling.

Now, let $\pi:E = TM \oplus TM \to M$ be the Whitney sum of $TM$ with itself.  Define a vector field $P$ on $E$ by
\begin{equation}\label{eq:parabolicvf}
P_{(p,X,Y)}f= \left.\frac{d}{dt}\right|_{t=0}  f( \gamma_{(X,Y)}(t), \dot{\gamma}_{(X,Y)}(t), D_t \dot\gamma_{(X,Y)}(t) ),
\end{equation}
for any function $f\in C^\infty(E,\RR)$.
Let $\{x^i\}$ be coordinates on  $M$, and take fiber coordinates $\{\eta^i\}$ and $\{\xi^i\}$ on $E$ where $\eta^i(p,X,Y) = dx^i(X)$ and $\xi^i(p,X,Y) = dx^i(Y)$.  Then in these coordinates, if we let $(\gamma_{(X,Y)}, \dot{\gamma}_{(X,Y)}, D_t\dot{\gamma}_{(X,Y)})  =(x^i, \eta^i, \xi^i)$, we can write $P_{(p,X,Y)}f$ as
\begin{align}
P_{(p,X,Y)}&= \dot{x}^k \frac{\partial}{\partial x^k}f + \dot{\eta}^k \frac{\partial}{\partial \eta^k}f + \dot{\xi}^k \frac{\partial}{\partial \xi^k}f \notag\\
	&=\eta^k \frac{\partial}{\partial x^k}f + (\xi^k - \eta^i\eta^j\Gamma_{ij}^k) \frac{\partial}{\partial \eta^k}f - \eta^i\xi^j\Gamma_{ij}^k \frac{\partial}{\partial \xi^k}f \label{eq:parabolicvfcoords}
\end{align}
In this formula, the $\Gamma_{ij}^k$ are the Christoffel symbols of $\nabla$, lifted to be constant on the fibers of $E$, and we have used the fact that $(D_t)^2\dot\gamma\equiv 0$.  This expression shows that $P$ is smooth, that integral curves of $P$ project onto solutions of \eqref{eq:curveeq}, and that solutions of \eqref{eq:curveeq} lift to integral curves of $P$ by $\rho(t) = (\gamma(t), \dot\gamma(t), D_t\dot\gamma(t))$.

The flow $\theta$ of $P$ is defined on some open subset of $\mathcal{O} \subset \RR\times E$ containing $\{0\}\times E$.  Thus from the above, $\gamma_{(X,Y)}(t)=\pi \circ \theta(t,(q,X,Y))$.

Let $\iota:T_qM \to E$ be the inclusion sending $X \in H_q$ to $(X,0)\in E_q$ and $Y\in V_q$ to $(0,Y)\in E_q$.  Then $\Psi(X,Y) = \pi \circ \theta(1, (q, \iota(X+Y)))$, which shows that $\Psi$ is smooth on some open set in $\iota^{-1}(\mathcal{O})$ to $M$.

We will now show that $\Psi$ is a diffeomorphism by showing that $\Psi_*$ is the identity map on $H$ and one half the identity map on $V$.  Given $X\in H_q$, we have
\[ \Psi_* X = \left.\frac{d}{dt}\right|_{t=0}  \Psi(tX,0) = \left.\frac{d}{dt}\right|_{t=0}  \gamma_{(tX,0)}(1) = \left.\frac{d}{dt}\right|_{t=0}  \gamma_{(X,0)}(t) = X.\]
For $Y\in V_q$, let $\alpha(t)$ be the $\nabla$-geodesic with initial velocity $Y$, and define $\beta(s) = \alpha(s^2t)$.  Then $\dot{\beta}(s) = 2st\dot{\alpha}(s^2t)$, $D_s\dot\beta(s) = 2t\dot\alpha(s^2t)$ and $(D_s)^2\dot\beta(s) = 0$.  Thus $\beta(s) = \gamma_{(0,2tY)}(s)$ and 
\begin{multline*}
 \Psi_* Y = \left.\frac{d}{dt}\right|_{t=0}  \Psi(0,tY) = \left.\frac{d}{dt}\right|_{t=0}  \gamma_{(0,tY)}(1) \\= \left.\frac{d}{dt}\right|_{t=0} \gamma_{(0,2tY)}\Big(\frac{\sqrt{2}}{2}\Big)= \left.\frac{d}{dt}\right|_{t=0}  \alpha(t/2) =\frac{1}{2}Y.
\end{multline*}

Since $\Psi_*$ is invertible, $\Psi$ is a diffeomorphism from some subset of $\iota^{-1}(\mathcal{O})\subset T_q M$ to $M$.  Relabeling our open sets if necessary, the theorem is proved.
\end{proof}

As a matter of interest, we note here that this proof extends readily to higher order ``polynomial geodesics'' that satisfy a higher order scaling condition, defined by the equations
\[
 D_t^{n}\dot\gamma = 0,\quad 
\gamma(0)=p,\quad  D_t^{k}\dot\gamma(0)= X_k, \ k=0,\ldots, n-1.
\]
Further, if we have a direct sum decomposition $TM = \oplus_{k=0}^{n-1} V^k$, and $X_k \in V^k|_p$, we have a diffeomorphism from a neighborhood of $0 \in TM$ to a neighborhood of $p\in M$ defined in the obvious way.

We will use the diffeomorphism described to provide a useful coordinate system on a QC manifold with a given pseudohermitian structure.  Before we do that, we will need the following lemma which allows us to take a frame for $T_qM$ and construct a local frame on a neighborhood of $q$, compatible with the parabolic exponential map.

\begin{lem}\label{lem:smoothvf} If $Z$ is any vector field parallel along all parabolic geodesics beginning at $q$, then $Z$ is smooth.
\end{lem}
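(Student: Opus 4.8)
The plan is to reuse the vector field and flow constructed in the proof of Theorem~\ref{thm:coordinates}, enlarged by one more copy of $TM$ in which the vector $Z$ is carried along each parabolic geodesic.

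Since smoothness is local, it suffices to prove $Z$ is smooth on the neighborhood $\mathcal{O}_M$ of $q$ furnished by Theorem~\ref{thm:coordinates}. On $\mathcal{O}_M$ every point $p$ equals $\gamma_{(X,Y)}(1)$ for $(X,Y)=\Psi^{-1}(p)$. Because $Z$ is parallel along $\gamma_{(X,Y)}$ and $Z(\gamma_{(X,Y)}(0))=Z(q)$, uniqueness of solutions of the (linear, first order) parallel transport equation forces $t\mapsto Z(\gamma_{(X,Y)}(t))$ to be the parallel transport of $Z(q)$ along $\gamma_{(X,Y)}$; in particular $Z(p)$ is this parallel transport evaluated at $t=1$. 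So it is enough to show this last quantity depends smoothly on $(X,Y)$, since $\Psi^{-1}$ is smooth.

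To that end I would enlarge the Whitney sum $E = TM\oplus TM$ of that proof to $\tilde{E} = TM\oplus TM\oplus TM$, with base coordinates $x^i$ and fiber coordinates $(\eta^i,\xi^i,\zeta^i)$, the new slot $\zeta$ recording the components of a vector field transported along the curve, and extend the vector field $P$ of \eqref{eq:parabolicvf}--\eqref{eq:parabolicvfcoords} to $\tilde{P}$ on $\tilde{E}$ by adjoining the parallel transport equation $D_tW=0$ in that slot. In the coordinates of that proof,
\[
\tilde{P} = \eta^k\frac{\partial}{\partial x^k} + \bigl(\xi^k - \eta^i\eta^j\Gamma_{ij}^k\bigr)\frac{\partial}{\partial\eta^k} - \eta^i\xi^j\Gamma_{ij}^k\frac{\partial}{\partial\xi^k} - \eta^i\zeta^j\Gamma_{ij}^k\frac{\partial}{\partial\zeta^k},
\]
with the $\Gamma_{ij}^k$ lifted to be constant on the fibers; this is visibly smooth, its integral curves project to solutions of \eqref{eq:curveeq}, and the integral curve through $(q,X,Y,W_0)$ has last three components $\bigl(\dot\gamma_{(X,Y)}(t),\, D_t\dot\gamma_{(X,Y)}(t),\, W(t)\bigr)$ with $W(t)$ the parallel transport of $W_0$ along $\gamma_{(X,Y)}$.

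Letting $\tilde{\theta}$ be the flow of $\tilde{P}$, defined on an open set containing $\{0\}\times\tilde{E}$, set $F(X,Y)$ equal to the last $TM$-component of $\tilde{\theta}\bigl(1,(q,X,Y,Z(q))\bigr)$; after shrinking $\mathcal{O}$ and $\mathcal{O}_M$ if necessary, exactly as at the end of the proof of Theorem~\ref{thm:coordinates}, $F$ is a smooth map on a neighborhood of $0\in T_qM$. By construction $F(X,Y)\in T_{\Psi(X,Y)}M$ is the parallel transport of $Z(q)$ along $\gamma_{(X,Y)}$ to time $1$, so by the reduction made above $Z|_{\mathcal{O}_M} = F\circ\Psi^{-1}$, a composition of smooth maps, and $Z$ is smooth. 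The only real point of care is this domain bookkeeping --- ensuring the time-$1$ flow of $\tilde{P}$, hence $F$, is defined on a full neighborhood of $q$ --- which is handled just as in Theorem~\ref{thm:coordinates}. Alternatively one may note that the $\gamma_{(X,0)}$ are ordinary $\nabla$-geodesics, so $Z$ is parallel along all geodesics through $q$ and the classical smoothness result applies; the argument above has the advantage of staying within the tools just developed.
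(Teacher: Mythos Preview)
Your proof is correct and takes essentially the same approach as the paper's: both observe that $Z$ at $p=\Psi(X,Y)$ is the parallel transport of $Z(q)$ along $\gamma_{(X,Y)}$, invoke smooth dependence of ODE solutions on initial data, and compose with the smooth map $\Psi^{-1}$. The paper simply writes the parallel transport equation $\partial_s Z^c + \dot\gamma^a Z^b \Gamma^c_{ab}=0$ directly in coordinates and cites smooth dependence on $(s,X,Y)$, whereas you repackage the same ODE as the extra slot of an enlarged vector field $\tilde P$ and cite smoothness of its flow; the content is identical.
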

\begin{proof}
Let $\{x^a\}$ be local coordinates centered at $q$, and let \[Z^a(s,X,Y)=dx^a(Z|_{\gamma_{(X,Y)}(s)}).\]  Then along every parabolic geodesic $\gamma$, $Z$ satisfies
\[ \partial_s Z^c(s,X,Y) + \dot{\gamma}_{(X,Y)}^a(s) Z^b(s,X,Y) \Gamma_{ab}^c(\gamma_{(X,Y)}(s)) = 0.\]
Since $\gamma_{(X,Y)}$ depends smoothly on $s$, and $X$, $Y$ and $\Gamma_{ij}^k$ depend smoothly on the coordinates, we see that $Z^a$ is a smooth function of its parameters.  Thus $Z_p = Z^a(1,\Psi^{-1}(p))\partial_a$ is a smooth vector field on a neighborhood of $q$.
\end{proof}

Now, let us return to the QC case.  Let $\{R_i\}$ be an oriented orthonormal frame for $V_q$, and let $\{I_i\}$ be the associated almost complex structures.  Choose an orthonormal basis $\{\xi_\alpha\}$ for $H_q$ so that $\xi_{4k+i+1} = I_i \xi_{4k+1}$ for $k=0,\ldots,n-1$.  Extending these vectors to be parallel along parabolic geodesics beginning at $q$, we have a smooth local frame for $TM = H\oplus V$.  Define the dual $1$-forms $\{\theta^\alpha,\eta^i\}$ by $\theta^\alpha(\xi_\beta) = \delta_\beta^\alpha$, $\theta^\alpha(R_i)=0$, $\eta^i(\xi_\alpha)=0$ and $\eta^i(R_j) = \delta_j^i$.  Finally, we extend the almost complex structures by defining $I_i \xi_{4k+1} = \xi_{4k+i+1}$ for each $k$.  Then each of these $1$-forms and almost complex structures is also parallel along parabolic geodesics, and all are parallel at $q$.  Using this frame and coframe, we have the following lemma. 
\begin{lem} For the frame, coframe and almost complex structures defined above, and for all vectors $X,Y\in H$, we have
\[ d\eta^i(X,Y) = 2g(I^i X, Y).\]
Thus the $\eta^i$,$I^i$ and $g$ form a QC pseudohermitian structure on $M$.
\end{lem}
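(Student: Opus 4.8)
The plan is to compare the data $(\eta^i, I^i, g)$ just constructed with the fixed QC pseudohermitian structure on $M$, whose contact forms, Reeb fields and almost complex structures I will denote $\breve\eta^i$, $\breve R_i$, $\breve I^i$, and whose Biquard connection is $\nabla$. Since $\nabla$ preserves $H$, $V$ and the extended metric, parallel transport along the parabolic geodesics through $q$ carries $H_q$ and $V_q$ isometrically onto $H$ and $V$; hence the $g$ appearing in the lemma is the fixed metric, $\{\xi_\alpha\}$ is everywhere an $Sp(n)Sp(1)$-frame for $H$, and $\{R_i\}$, $\{\breve R_i\}$ are orthonormal frames for the same bundle $V$. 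Consequently $R_i = \Lambda_i{}^{j}\breve R_j$ for a smooth $O(3)$-valued function $\Lambda$, and dually $\eta^i = \Lambda_i{}^{j}\breve\eta^j$; in particular $\bigcap_i\ker\eta^i = \bigcap_i\ker\breve\eta^i = H$. Moreover, each of the tensors $(I^i)^2+\mathrm{Id}$, $I^1I^2I^3+\mathrm{Id}$ and $g(I^i\,\cdot\,,\,\cdot\,)+g(\,\cdot\,,I^i\,\cdot\,)$ is built from objects that are parallel along every parabolic geodesic from $q$ (the $I^i$ by construction, $g$ since $\nabla g=0$), so each such tensor is parallel along those geodesics and vanishes at $q$, hence vanishes on all of $\mathcal O_M$: the $I^i$ are genuine almost complex structures on $H$ satisfying the quaternion relations and compatible with $g$.

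With these preliminaries the identity $d\eta^i(X,Y) = 2g(I^iX,Y)$ for $X,Y\in H$ reduces to a single point. For such $X,Y$ we have $\breve\eta^j(X)=\breve\eta^j(Y)=0$, so from $d\eta^i = d\Lambda_i{}^{j}\wedge\breve\eta^j + \Lambda_i{}^{j}\,d\breve\eta^j$ the first term drops out upon restriction to $H$, and using the defining identity of the fixed structure,
\[ d\eta^i(X,Y) = \Lambda_i{}^{j}\,d\breve\eta^j(X,Y) = 2\,g\bigl((\Lambda_i{}^{j}\breve I^j)X,\,Y\bigr). \]
Thus it remains only to show $I^i = \Lambda_i{}^{j}\breve I^j$.

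This last identification is the substantive step, and I would argue it through the parallel bundle isomorphism $\varphi\colon V\to Q$ of Biquard's theorem, which satisfies $\varphi(\breve R_j)=\breve I^j$ and $\nabla\varphi=0$, and through which the almost complex structure associated to $R_i$ at $q$ is $I^i|_q = \varphi_q(R_i|_q)$. The section $\varphi(R_i)$ is parallel along every parabolic geodesic from $q$, being the composite of the $\nabla$-parallel $\varphi$ with the parabolically parallel $R_i$, and it equals $I^i|_q$ at $q$. The constructed $I^i$ is likewise parallel along every such geodesic, has the same value at $q$, and is a section of the $\nabla$-parallel subbundle $Q$ (since $I^i|_q\in Q_q$ and parallel transport preserves $Q$). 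By Theorem \ref{thm:coordinates} the neighborhood $\mathcal O_M$ is covered by parabolic geodesics issuing from $q$, so two sections of $Q$ that are parallel along all of them and agree at $q$ must coincide on $\mathcal O_M$; hence $I^i = \varphi(R_i) = \varphi(\Lambda_i{}^{j}\breve R_j) = \Lambda_i{}^{j}\breve I^j$. Feeding this into the display above yields $d\eta^i(X,Y)=2g(I^iX,Y)$ for all $X,Y\in H$, and combined with the preliminaries this shows $(\eta^i,I^i,g)$ is a QC pseudohermitian structure near $q$. The only real obstacle is that last identification — everything else is bookkeeping about $O(3)$-gauges — and it is there that one needs both the parallel isomorphism $\varphi$ supplied by the Biquard connection and the fact (Theorem \ref{thm:coordinates}) that parabolic geodesics from $q$ sweep out a full neighborhood.
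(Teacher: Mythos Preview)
Your proof is correct and follows the paper's approach: relate the constructed coframe $\eta^i$ to the fixed contact forms by an orthogonal gauge, using that $\nabla$ preserves $V$ and the metric, and then read off the QC identity from the fixed structure. You in fact supply a step the paper's terse proof leaves implicit---the identification $I^i=\Lambda_i{}^{j}\breve I^j$, which you obtain via the parallel isomorphism $\varphi:V\to Q$---so your argument is the more complete of the two.
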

\begin{proof}
Let $g$,  $\tilde{\eta}^i$ and $\tilde{I}^i$ be the metric,  contact $1$-forms and almost complex structures defining a QC pseudohermitian structure near $q$.  By a constant coefficient rotation, we may assume that $\tilde{\eta}^i|_q = \eta^i|_q$.  Since the connection preserves the metric, the orthogonality relations of the $\eta^i$ are preserved by parallel translation.  Thus, both $\{\eta^i\}$ and $\{\tilde{\eta}^i\}$ are oriented orthonormal $V$-coframes, and so are related by an orthogonal transformation at each point.  Since both frames are smooth, the transformation is smooth, and since the determinant is a continuous function on a connected set with values in $\pm 1$, which equals $1$ at $q$, the transformation actually lies in $SO(3)$.  Thus the $\eta^i$ and $I^i$ form a QC pseudohermitian structure with $g$.
\end{proof}
The frame and coframe constructed above will be called a \emph{special frame} and a \emph{special coframe}.

Given any special frame, we may define a coordinate map on a neighborhood of $q$ by composing the inverse of $\Psi$ with the map $\lambda: T_qM \to \RR^{4n+3}:X \mapsto (x^\alpha,t^i)=(\theta^\alpha(X),\eta^i(X))$.  These coordinates will be called \emph{QC pseudohermitian normal coordinates}, or pseudohermitian normal coordinates when no confusion can arise.  

With these definitions in mind, our index convention defined in section \ref{sec:background} is hereby refined to refer to a special frame and coframe henceforth.

\subsubsection{Parabolic Taylor expansions} \label{sec:parabolictaylor}

Returning to our analogy between scaling operators on $\RR^n$ and the quaternionic Heisenberg group, we recall that the generator for the standard dilation $x\mapsto sx$ on $\RR^n$ is the Euler vector field $X = x^i \partial_i$, and a tensor field $\vphi$ is called homogeneous of order $m$ if $\Lie_X \vphi = m \vphi$.  In the setting of parabolic dilations on the quaternionic Heisenberg group, the generator of $\delta_s: (x,t)\mapsto(sx, s^2t)$ is the vector field $P = x^\alpha \partial_\alpha + 2 t^i \partial_i$.  Note that in the notation of section \ref{sec:quatheis}, we can express $P$ in terms of the left invariant vector fields on $\QH$ as 
\[ P = w^\alpha W_\alpha + x^\alpha X_\alpha + y^\alpha Y_\alpha z^\alpha Z_\alpha + rR + sR + tT.\]  As in the Euclidean setting we say a tensor field $\vphi$ is homogeneous of order $m$ if $\Lie_P \vphi = m\vphi$.  For an arbitrary tensor field, we denote by $\vphi_{(m)}$ the part of the tensor that is homogeneous of order $m$.

Given a QC manifold and pseudohermitian coordinates as above centered at a point $q \in M$, we may define the vector $P$ in these coordinates.  Then next lemma shows how $P$ is related to the special frame that we have constructed.

\begin{lem}
Let $P$ be the vector described in pseudohermitian normal coordinates by $P = x^\alpha \partial_\alpha + 2t^i \partial_i$, and let $\tensor{\omega}{\down a \up b}$ be the connection $1$-forms for the Biquard connection. Then 
\[ \theta^\alpha(P) = x^\alpha,\  \eta^i(P) = t^i,\text{ and } \tensor{\omega}{\down a \up b}(P)=0.\]
Thus $P = x^\alpha \xi_\alpha + t^i R_i$.
\end{lem}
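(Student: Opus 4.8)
The plan is to fix an arbitrary point $p$ in the coordinate neighborhood, with pseudohermitian normal coordinates $(x^\alpha,t^i)$, and to identify explicitly the parabolic geodesic from $q$ passing through $p$. Setting $X=x^\alpha\xi_\alpha|_q\in H_q$ and $Y=t^iR_i|_q\in V_q$, the definition of the normal coordinate chart (the composition of $\Psi^{-1}$ with the linear map $\lambda$) gives $p=\Psi(X,Y)=\gamma_{(X,Y)}(1)$; moreover, the parabolic scaling $\Psi(sX,s^2Y)=\gamma_{(X,Y)}(s)$ of Theorem \ref{thm:coordinates} together with the formula for $\lambda$ shows that in these coordinates the curve $\gamma_{(X,Y)}$ is simply $s\mapsto(sx^\alpha,s^2t^i)$. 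Differentiating this coordinate expression at $s=1$ gives $\dot\gamma_{(X,Y)}(1)=x^\alpha\partial_\alpha+2t^i\partial_i=P|_p$. Thus, at every point of the neighborhood, $P$ is the velocity at $s=1$ of the parabolic geodesic through that point. I expect this identification to be the one point requiring care, since it rests on tracking how $\Psi$, $\lambda$, and the scaling relation interact; the rest is bookkeeping with the special frame.

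Next I would recompute $\dot\gamma_{(X,Y)}$ using the special frame. Writing $\dot\gamma_{(X,Y)}(s)=a^\alpha(s)\,\xi_\alpha|_{\gamma(s)}+b^i(s)\,R_i|_{\gamma(s)}$ and recalling that the special frame vectors $\xi_\alpha$ and $R_i$ are parallel along every parabolic geodesic issuing from $q$, covariant differentiation along $\gamma$ acts only on the coefficients, so that $D_s^2\dot\gamma=\ddot a^\alpha\,\xi_\alpha+\ddot b^i\,R_i$. The equation $D_s^2\dot\gamma=0$ then forces $a^\alpha$ and $b^i$ to be affine in $s$, while the initial conditions $\dot\gamma(0)=X$ and $D_s\dot\gamma(0)=Y$ from \eqref{eq:curveeq} pin them down to $a^\alpha(s)\equiv x^\alpha$ and $b^i(s)=s\,t^i$. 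Evaluating at $s=1$ gives $\dot\gamma_{(X,Y)}(1)=x^\alpha\xi_\alpha|_p+t^iR_i|_p$; comparing with the previous paragraph yields $P|_p=x^\alpha\xi_\alpha|_p+t^iR_i|_p$, which is the final assertion of the lemma. Pairing with the special coframe, using $\theta^\alpha(\xi_\beta)=\delta^\alpha_\beta$, $\theta^\alpha(R_i)=0$, $\eta^i(\xi_\alpha)=0$, and $\eta^i(R_j)=\delta^i_j$, then gives $\theta^\alpha(P)=x^\alpha$ and $\eta^i(P)=t^i$.

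Finally, for the connection $1$-forms, recall that $\tensor{\omega}{\down a \up b}$ is characterized by $\nabla_W\xi_a=\tensor{\omega}{\down a \up b}(W)\,\xi_b$, so it suffices to show $\nabla_P\xi_a=0$ at $p$ for every $a$. Since $P|_p=\dot\gamma_{(X,Y)}(1)$, and covariant differentiation of a vector field in the direction of a curve's velocity coincides with covariant differentiation of that field along the curve, we obtain $\nabla_{P|_p}\xi_a=\big(D_s(\xi_a\circ\gamma_{(X,Y)})\big)\big|_{s=1}=0$, because $\xi_a$ is parallel along $\gamma_{(X,Y)}$. Applying $\theta^b$ gives $\tensor{\omega}{\down a \up b}(P)=0$ throughout the neighborhood, which completes the proof.
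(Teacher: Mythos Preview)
Your proof is correct and follows essentially the same approach as the one the paper cites from \cite{JerisonLee:1989}: identify $P$ at each point as the velocity $\dot\gamma_{(X,Y)}(1)$ of the parabolic geodesic through that point via the scaling relation, then use parallelism of the special frame along $\gamma_{(X,Y)}$ to read off both the coframe values and the vanishing of the connection forms on $P$. There is nothing to add.
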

\begin{proof}  The proof is essentially identical to the proof of Lemma 2.4 in \cite{JerisonLee:1989}.  \end{proof}

We now use this result to calculate the low order homogeneous terms of the special coframe and the connection $1$-forms.  Recall that for a differential form $\vphi$, 
\begin{equation} \label{eq:ordercalc}
\Lie_P \vphi = P \inmult d\vphi + d(P\inmult \vphi).
\end{equation}  As a result, we have the following proposition.
\begin{prop} \label{prop:coframeconnection}
In pseudohermitian normal coordinates, the low order homogeneous terms of the special coframe and connection $1$-forms are
\begin{multline*}
\eta^i_{(2)}=\frac{1}{2}dt^i -\tensor{I}{\up i \down {\alpha \beta}}x^\alpha dx^\beta; \quad \eta^i_{(3)} = 0; \\
\quad \eta^i_{(m)} = \frac{1}{m} ( t^j \tensor{\omega}{\down j \up i} + \tensor{T}{\up i \down{jk}}t^j \eta^k -2 \tensor{I}{\up i \down{\alpha \beta}}x^\alpha \theta^\beta)_{(m)},\quad m\geq 4; \tag{a}
\end{multline*}
\begin{multline*}
\theta^\alpha_{(1)}=dx^\alpha; \quad \theta^\alpha_{(2)} = 0;\\
 \quad \theta^\alpha_{(m)} = \frac{1}{m}\big( x^\beta \tensor{\omega}{\down \beta \up \alpha} - \tensor{T}{\up \alpha \down{i\gamma}}x^\gamma \eta^i + \tensor{T}{\up \alpha \down{i\beta}}t^i\theta^\beta + \tensor{T}{\up \alpha \down{ij}} t^i \eta^j\big)_{(m)}, \quad m\geq 3;\tag{b}
\end{multline*}
\begin{multline*}
\tensor{\omega}{\down a \up b}_{(1)}=0; \\
 \tensor{\omega}{\down a \up b}_{(m)} = \frac{1}{m}(\tensor{R}{\down{\alpha\beta a}\up b} x^\alpha \theta^\beta + \tensor{R}{\down{\alpha j a}\up b} x^\alpha \eta^j - \tensor{R}{\down{\alpha j a}\up b} t^j \theta^\alpha +  \tensor{R}{\down{ija}\up b}t^i \eta^j)_{(m)}, \\ m\geq 2. \tag{c}
\end{multline*}
\end{prop}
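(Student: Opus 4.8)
The plan is to combine Cartan's identity \eqref{eq:ordercalc} with the structure equations of the Biquard connection and the preceding lemma, which supplies $\theta^\alpha(P) = x^\alpha$, $\eta^i(P) = t^i$ and $\tensor{\omega}{\down a \up b}(P) = 0$. The organizing principle is that a smooth differential form $\vphi$ near $q$ has a parabolic-homogeneous decomposition $\vphi = \sum_{m\geq 1}\vphi_{(m)}$ (the special coframe and connection $1$-forms are smooth by Lemma~\ref{lem:smoothvf}, and, being $\RR$-linear combinations of the $dx^\alpha$ and $dt^i$ with smooth coefficients, they have no homogeneous part of order $\leq 1$ other than the $dx^\alpha$-type terms), and that this decomposition is detected by the Euler-type field $P$ through $\Lie_P\vphi = \sum_m m\,\vphi_{(m)}$; hence $\vphi_{(m)} = \tfrac{1}{m}(\Lie_P\vphi)_{(m)}$ for each $m\geq 1$. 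So it suffices to compute $\Lie_P$ of each of $\eta^i$, $\theta^\alpha$ and $\tensor{\omega}{\down a \up b}$ and then split the result by homogeneity degree.

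For the $\Lie_P$ computations I would use $\Lie_P\vphi = P\inmult d\vphi + d(P\inmult\vphi)$ together with the structure equations (with sign conventions consistent with $\tensor{T}{\up i\down{\alpha\beta}} = d\eta^i(\xi_\alpha,\xi_\beta)$ and $\tensor{R}{\down{abc}\up d} = \theta^d(R(\xi_a,\xi_b)\xi_c)$)
\[ d\theta^a = -\tensor{\omega}{\down b \up a}\wedge\theta^b + \tfrac{1}{2}\tensor{T}{\up a \down{bc}}\theta^b\wedge\theta^c, \qquad d\tensor{\omega}{\down a \up b} = -\tensor{\omega}{\down c \up b}\wedge\tensor{\omega}{\down a \up c} + \tfrac{1}{2}\tensor{R}{\down{cda}\up b}\theta^c\wedge\theta^d. \]
Since $\nabla$ preserves $TM = H\oplus V$, the mixed connection blocks $\tensor{\omega}{\down\alpha\up i}$ and $\tensor{\omega}{\down i\up\alpha}$ vanish, so contracting the first equation with $P$ (for $\theta^a = \eta^i$ and for $\theta^a=\theta^\alpha$), using $\tensor{\omega}{\down a\up b}(P) = 0$, the identities $\tensor{T}{\up i\down{\alpha\beta}} = -2\tensor{I}{\up i\down{\alpha\beta}}$ and $\tensor{T}{\up i\down{j\alpha}} = \tensor{T}{\up\alpha\down{\beta\gamma}} = 0$ from Proposition~\ref{prop:torsionprop}, and $d(P\inmult\eta^i) = dt^i$, $d(P\inmult\theta^\alpha) = dx^\alpha$, gives
\[ \Lie_P\eta^i = dt^i + t^j\tensor{\omega}{\down j\up i} + \tensor{T}{\up i\down{jk}}t^j\eta^k - 2\tensor{I}{\up i\down{\alpha\beta}}x^\alpha\theta^\beta, \]
\[ \Lie_P\theta^\alpha = dx^\alpha + x^\beta\tensor{\omega}{\down\beta\up\alpha} - \tensor{T}{\up\alpha\down{i\gamma}}x^\gamma\eta^i + \tensor{T}{\up\alpha\down{i\beta}}t^i\theta^\beta + \tensor{T}{\up\alpha\down{ij}}t^i\eta^j. \]
For $\tensor{\omega}{\down a\up b}$ one has $P\inmult\tensor{\omega}{\down a\up b} = 0$ and $P\inmult(\tensor{\omega}{\down c\up b}\wedge\tensor{\omega}{\down a\up c}) = 0$ (each factor annihilates $P$), so only the curvature term survives; expanding $P^c = (x^\alpha,t^i)$ and using $\tensor{R}{\down{cda}\up b} = -\tensor{R}{\down{dca}\up b}$ yields
\[ \Lie_P\tensor{\omega}{\down a\up b} = \tensor{R}{\down{\alpha\beta a}\up b}x^\alpha\theta^\beta + \tensor{R}{\down{\alpha j a}\up b}x^\alpha\eta^j - \tensor{R}{\down{\alpha j a}\up b}t^j\theta^\alpha + \tensor{R}{\down{ija}\up b}t^i\eta^j. \]

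Finally I would extract the homogeneous pieces, keeping track of orders: $x^\alpha$ and $dx^\alpha$ have order $1$, $t^i$ and $dt^i$ have order $2$, the structure matrices $\tensor{I}{\up i\down{\alpha\beta}}$ are constant in a special frame, and each of $\theta^\alpha$, $\eta^i$, $\tensor{\omega}{\down a\up b}$ has order $\geq 1$ a priori. Inspecting the right-hand sides: the order-$1$ parts of $\Lie_P\eta^i$ and $\Lie_P\tensor{\omega}{\down a\up b}$ both vanish, so $\eta^i$ and $\tensor{\omega}{\down a\up b}$ in fact have order $\geq 2$; with this, the only order-$\leq 2$ term of $\Lie_P\theta^\alpha$ is $dx^\alpha$, forcing $\theta^\alpha_{(1)} = dx^\alpha$ and $\theta^\alpha_{(2)} = 0$; the order-$2$ and order-$3$ parts of $\Lie_P\eta^i$ then give $\eta^i_{(2)} = \tfrac12 dt^i - \tensor{I}{\up i\down{\alpha\beta}}x^\alpha dx^\beta$ and $\eta^i_{(3)} = 0$ (the latter because $\theta^\beta_{(2)} = 0$); and for $m$ above these thresholds the terms $dt^i$ and $dx^\alpha$ contribute nothing, so $\vphi_{(m)} = \tfrac1m(\Lie_P\vphi)_{(m)}$ is exactly the recursion asserted in (a)--(c). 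The main obstacle is not conceptual but organizational: one must establish the a priori order bounds and the vanishing of the low-order pieces in the right order to avoid circularity, and keep the torsion substitutions and index antisymmetrizations straight --- this is the quaternionic analogue of the argument behind Lemma~2.5 of \cite{JerisonLee:1989}.
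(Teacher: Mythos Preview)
Your proposal is correct and is precisely the approach the paper has in mind: the paper's proof simply refers to Proposition~2.5 of \cite{JerisonLee:1989}, and your argument---compute $\Lie_P$ of each form via Cartan's formula and the structure equations, use the lemma $\theta^\alpha(P)=x^\alpha$, $\eta^i(P)=t^i$, $\tensor{\omega}{\down a\up b}(P)=0$, then read off homogeneous parts with the bootstrap on orders---is exactly that argument transported to the QC setting. The bookkeeping (in particular deducing $\tensor{\omega}{\down a\up b}_{(1)}=0$ and $\eta^i_{(1)}=0$ first, then $\theta^\alpha_{(2)}=0$, then $\eta^i_{(3)}=0$) is done in the right order and the torsion substitutions are correct.
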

\begin{proof}
The proof is essentially the same as the proof of Proposition 2.5 in \cite{JerisonLee:1989}.
\end{proof}

Let us denote by $\order{m}$ those tensor fields whose Taylor expansions at $q$ contain only terms of order greater than or equal to $m$.  For example, from the above proposition, $\eta^i \in \order{2}$ and $\theta^\alpha \in \order{1}$.  It is routine to check that if $\vphi \in \order{m}$ and $\psi \in \order{m'}$, then $\vphi \otimes \psi \in \order{m+m'}$.  To further extend the utility of this notation we introduce the following:  for any index $a$, let $o(a)=1$ if $a \leq 4n$ and $o(a)=2$ if $a>4n$.  Given a multiindex $A=(a_1, \ldots a_r)$, we let $\#A = r$ and $o(A)=\sum_i o(a_i)$.  Finally, if we have a collection of indexed vector fields, $X_a$, we let $X_A = X_{a_r}\ldots X_{a_1}$, and similarly for similar expressions.

\begin{cor} \label{cor:XT} If we define $X_\alpha = \partial_\alpha+ 2 \tensor{I}{\up i \down{\beta \alpha}}x^\beta \partial_i$ and $T_i = 2 \partial_i$, then $\xi_\alpha = X_\alpha + \order{1}$ and $R_i = T_i+\order{0}$.\end{cor}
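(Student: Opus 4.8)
The plan is to read the corollary off the Taylor expansions of the special coframe recorded in Proposition~\ref{prop:coframeconnection}, by inverting the change-of-basis matrix between the special coframe $\{\theta^\alpha,\eta^i\}$ and the coordinate coframe $\{dx^\beta, dt^j\}$ while keeping careful track of homogeneous orders. Note that $\xi_\alpha$ and $R_i$ are smooth by Lemma~\ref{lem:smoothvf}, so their Taylor expansions at $q$ make sense.

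First I would extract what is needed from Proposition~\ref{prop:coframeconnection}. Since $\theta^\alpha\in\order{1}$ with $\theta^\alpha_{(1)}=dx^\alpha$ and $\theta^\alpha_{(2)}=0$, we may write $\theta^\alpha = dx^\alpha + \psi^\alpha$ with $\psi^\alpha\in\order{3}$; since $\eta^i\in\order{2}$ with the stated $\eta^i_{(2)}$ and $\eta^i_{(3)}=0$, we may write $\eta^i = \tfrac12\,dt^i - \tensor{I}{\up i\down{\gamma\beta}}\,x^\gamma\,dx^\beta + \chi^i$ with $\chi^i\in\order{4}$. Passing to components, the matrix $M$ expressing $(\theta^\alpha,\eta^i)$ in terms of $(dx^\beta, dt^j)$ takes the form $M = M_0 + N$, where $M_0=\operatorname{diag}(\mathrm{Id}_{4n},\tfrac12\,\mathrm{Id}_3)$, the $(\theta,dx)$- and $(\eta,dt)$-blocks of $N$ have entries in $\order{2}$, the $(\theta,dt)$-block has entries in $\order{1}$, and the $(\eta,dx)$-block equals $-\tensor{I}{\up i\down{\gamma\beta}}x^\gamma$ plus entries in $\order{3}$. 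Since $M(q)=M_0$ is invertible, $M$ is invertible near $q$; as $\{\xi_\alpha, R_i\}$ is dual to $\{\theta^\alpha,\eta^i\}$, the components of $\xi_\alpha$ and $R_i$ in the coordinate frame $\{\partial_\beta, \partial_j\}$ are the corresponding columns of $M^{-1}$.

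Next I would compute $M^{-1}$ modulo the orders needed. Writing $M = M_0(\mathrm{Id}+E)$ with $E = M_0^{-1}N$, the bounds above show that every entry of $E$ is in $\order{1}$, with the $(\theta,dx)$- and $(\eta,dt)$-blocks in $\order{2}$ and the $(\eta,dx)$-block equal to $-2\tensor{I}{\up i\down{\gamma\beta}}x^\gamma$ plus entries in $\order{3}$. Expanding $M^{-1} = (\mathrm{Id} - E + E^2 - \cdots)\,M_0^{-1}$ and using that every entry of $E^k$ lies in $\order{k}$ (and that the off-diagonal blocks of $E^k$ lie in $\order{3}$ for all $k\ge 2$), only finitely many terms of the series matter modulo any fixed order, so this is a finite computation. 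It yields: the $\partial_\beta$-component of $\xi_\alpha$ is $\delta^\beta_\alpha + \order{2}$; the $\partial_i$-component of $\xi_\alpha$ is $2\tensor{I}{\up i\down{\beta\alpha}}x^\beta + \order{3}$; the $\partial_\beta$-component of $R_i$ is in $\order{1}$; and the $\partial_j$-component of $R_i$ is $2\delta^j_i + \order{2}$. Since a function in $\order{m}$ times $\partial_\beta$ lies in $\order{m-1}$ and times $\partial_j$ lies in $\order{m-2}$, these four statements say exactly that $\xi_\alpha - X_\alpha\in\order{1}$ and $R_i - T_i\in\order{0}$. (An equivalent route that avoids the block inversion is to pair the coframe directly with the polynomial fields $X_\alpha$ and $T_i$: one finds $\theta^\alpha(X_\beta)=\delta^\alpha_\beta+\order{2}$, $\eta^i(X_\beta)\in\order{3}$, $\theta^\alpha(T_i)\in\order{1}$, $\eta^i(T_j)=\delta^i_j+\order{2}$, the cancellation in $\eta^i(X_\beta)$ being precisely what forces the coefficient $2$ in $X_\alpha$ against the $\tfrac12$ in $\eta^i_{(2)}$; a short bootstrap starting from $\xi_\alpha|_q=\partial_\alpha$, $R_i|_q=2\partial_i$ then propagates these estimates to $\xi_\alpha-X_\alpha$ and $R_i-T_i$.)

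All of this is mechanical once the bookkeeping is set up, and I do not foresee a genuine obstacle. The one point that needs attention is keeping straight the orders of the individual blocks of $N$, hence of $E$ --- the off-diagonal corrections are only $\order{1}$ while the diagonal ones are $\order{2}$ --- and remembering that $M_0$ is $\operatorname{diag}(\mathrm{Id},\tfrac12\,\mathrm{Id})$ rather than the identity; it is exactly this asymmetry, together with the factor $\tfrac12$, that produces the explicit coefficients $2$ appearing in the definitions of $X_\alpha$ and $T_i$.
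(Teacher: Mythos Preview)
Your argument is correct and is exactly the implicit proof the paper intends: the corollary is stated without proof immediately after Proposition~\ref{prop:coframeconnection}, and the only thing to do is dualize the low-order expansions of $\theta^\alpha$ and $\eta^i$ given there, which is precisely the matrix inversion (or equivalent pairing computation) you carry out. Your order bookkeeping on the blocks of $E$ and on $E^k$ is accurate, and the final step---multiplying an $\order{m}$ coefficient against $\partial_\beta\in\order{-1}$ or $\partial_j\in\order{-2}$---gives the claimed conclusions.
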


This corollary shows that the special frame constructed in these coordinates is particularly close to the standard left-invariant frame on the quaternionic Heisenberg group.  In particular, the vector fields $X_\alpha$ and $T_i$ are the standard left-invariant frame on $\QH^n$ defined in section \ref{sec:quatheis}, and the given frame on $M$ is expressed as a perturbation of them.  Further, as a matter of notation we will occasionally refer to $T_i$ as $X_{4n+i}$, similar to our convention regarding $R_i = \xi_{4n+i}$.

Given any two $Sp(n)Sp(1)$-frames at $q$, they determine distinct parabolic coordinate systems, related by a unique element of $Sp(n)Sp(1)$.  Combining Theorem \ref{thm:coordinates} and Proposition \ref{prop:coframeconnection} we have the following theorem.
\begin{thm} \label{thm:paraboliccoordinates}
Let $M$ be a QC manifold with pseudohermitian structure $\eta$, and let $q\in M$ be any point.  Then there exist parabolic normal coordinates $(x^\alpha,t^i)$ about $q$ for which
\[ g_{\alpha\beta}(q) = \delta_{\alpha\beta},\quad \eta^i(q) = \frac{1}{2}dt^i(q), \quad \tensor{\omega}{\down b \up b}(q) = 0.\]
Further, any two such coordinate systems centered at $q$ are related by a linear transformation in $Sp(n)Sp(1)$.
\end{thm}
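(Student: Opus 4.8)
The plan is to read off both assertions directly from the construction of QC pseudohermitian normal coordinates preceding the statement, using Corollary~\ref{cor:XT}, Proposition~\ref{prop:coframeconnection}, and the order conventions of Section~\ref{sec:parabolictaylor}; there is no essential new ingredient beyond these.

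\emph{Existence.} Fix any $Sp(n)Sp(1)$ frame $\{\xi_\alpha, R_i\}$ at $q$, extend it to a special frame by parallel transport along the parabolic geodesics issuing from $q$ (smooth by Lemma~\ref{lem:smoothvf}), and let $(x^\alpha,t^i)$ be the associated QC pseudohermitian normal coordinates, which form a diffeomorphism onto a neighborhood of $q$ by Theorem~\ref{thm:coordinates}. Then evaluate at the origin. By Corollary~\ref{cor:XT} we have $\xi_\alpha = X_\alpha + \order{1}$ with $X_\alpha|_q = \partial_\alpha|_q$, and every vector field in $\order{1}$ vanishes at $q$ in these coordinates, so $\partial_\alpha|_q = \xi_\alpha|_q$; since $\{\xi_\alpha\}$ is orthonormal this gives $g_{\alpha\beta}(q) = g(\xi_\alpha,\xi_\beta)|_q = \delta_{\alpha\beta}$. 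By Proposition~\ref{prop:coframeconnection}(a), $\eta^i = \eta^i_{(2)} + \eta^i_{(4)} + \cdots$ with $\eta^i_{(2)} = \tfrac12\, dt^i - \tensor{I}{\up i \down{\alpha\beta}}x^\alpha\, dx^\beta$; the term $\tensor{I}{\up i \down{\alpha\beta}}x^\alpha\, dx^\beta$ and every homogeneous $1$-form of order $\geq 3$ vanish at $q$ (each of their monomials carries a positive-degree factor in the $x$'s or $t$'s), so $\eta^i(q) = \tfrac12\, dt^i(q)$. Finally, Proposition~\ref{prop:coframeconnection}(c) gives $\tensor{\omega}{\down a \up b}_{(1)} = 0$, and for $m\geq 2$ each homogeneous piece $\tensor{\omega}{\down a \up b}_{(m)}$ is, by the recursion there, the order-$m$ part of a sum of products of the form $x^\alpha\cdot(\text{a }1\text{-form})$ or $t^i\cdot(\text{a }1\text{-form})$, hence vanishes at $q$; thus $\tensor{\omega}{\down a \up b}(q) = 0$ for all $a,b$.

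\emph{Uniqueness.} The construction depends only on the choice of $Sp(n)Sp(1)$ frame $F$ at $q$: the map $\Psi$ of Theorem~\ref{thm:coordinates} depends only on $\nabla$ and the splitting $T_qM = H_q\oplus V_q$, while the linear isomorphism $\lambda = \lambda_F : T_qM\to\RR^{4n+3}$ is the coordinatization induced by $F$, so the coordinate map is $\lambda_F\circ\Psi^{-1}$. If $F' = F\cdot A$ with $A\in Sp(n)Sp(1)$, then since parallel transport is linear the two extended special frames differ by the same constant matrix $A$ at every point of the common neighborhood, so $\lambda_{F'} = A^{-1}\circ\lambda_F$ and the two coordinate systems are related by the linear transformation $A^{-1}\in Sp(n)Sp(1)$; conversely every $A\in Sp(n)Sp(1)$ produces another $Sp(n)Sp(1)$ frame $F\cdot A$ at $q$. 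Since the $Sp(n)Sp(1)$-frames at $q$ form a torsor under $Sp(n)Sp(1)$ and this group acts freely, the element of $Sp(n)Sp(1)$ relating two such coordinate systems is unique.

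Essentially everything here is bookkeeping with the weighted Taylor expansions of Proposition~\ref{prop:coframeconnection}; the one point that needs a little care is checking that the indicated homogeneous components of $\eta^i$ (for $m\geq 3$) and of $\tensor{\omega}{\down a \up b}$ (for $m\geq 2$) really do vanish at $q$, which follows structurally because the recursion exhibits each of them as a product of a coordinate function $x^\alpha$ or $t^i$ with a form of strictly lower order. As with the other results of this section, this mirrors the corresponding argument for the CR case in \cite{JerisonLee:1989}.
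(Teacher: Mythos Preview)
Your proof is correct and follows essentially the same approach as the paper. The paper itself does not give a detailed argument here, merely stating that the theorem follows by ``combining Theorem~\ref{thm:coordinates} and Proposition~\ref{prop:coframeconnection}'' together with the remark just before the theorem that two $Sp(n)Sp(1)$-frames at $q$ determine coordinate systems related by a unique element of $Sp(n)Sp(1)$; your write-up simply makes these steps explicit, including the observation that $\Psi$ depends only on $\nabla$ and the splitting $H_q\oplus V_q$ while the frame enters only through the linear identification $\lambda_F$.
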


We close this section with a lemma on the parabolic version of Taylor expansions.

\begin{lem} \label{lem:parabolic_taylor} Let $F$ be a smooth function defined near $q \in M$.  Then in pseudohermitian normal coordinates, for any nonnegative integer $m$, 
\[ F_{(m)} = \sum_{o(A)=m} \frac{1}{(\#A)!}\Big(\frac{1}{2}\Big)^{o(A)-\# A} x^A (X_A F)|_q.\]
\end{lem}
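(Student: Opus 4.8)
The plan is to parallel the standard Riemannian proof that normal coordinates give a Taylor expansion governed by symmetrized covariant derivatives, but adapted to the parabolic scaling. The key structural fact is the parabolic scaling $\Psi(tX,t^2Y)=\gamma_{(X,Y)}(t)$ from Theorem \ref{thm:coordinates}: along a fixed parabolic geodesic through $q$, the coordinates scale as $(x^\alpha,t^i)\mapsto(tx^\alpha,t^2t^i)$, so if we write $x^A$ for a monomial indexed by a multiindex $A$ with weight $o(A)$ (each horizontal index contributing $1$, each vertical index contributing $2$), then $x^A$ is a homogeneous function of order $o(A)$ under the parabolic dilation generated by $P$. Since a smooth $F$ near $q$ decomposes as $F=\sum_m F_{(m)}$ with $F_{(m)}$ homogeneous of order $m$ (the Lie derivative $\Lie_P F_{(m)}=mF_{(m)}$), it suffices to identify $F_{(m)}$ with the asserted sum.

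First I would recall from Lemma (the one preceding Cor.~\ref{cor:XT}) that $P=x^\alpha\xi_\alpha+t^iR_i$ in these coordinates, and from Corollary \ref{cor:XT} that $\xi_\alpha=X_\alpha+\order{1}$ and $R_i=T_i+\order{0}=X_{4n+i}+\order{0}$, where $X_\alpha=\partial_\alpha+2\tensor{I}{\up i\down{\beta\alpha}}x^\beta\partial_i$ and $T_i=2\partial_i$ are exactly the left-invariant frame of the quaternionic Heisenberg group. The essential point is that the vector fields $X_a$ are homogeneous of order $-o(a)$ under $P$ (they lower parabolic order by $o(a)$), so applying $X_A$ to a monomial $x^B$ and evaluating at $q$ is nonzero only when the weights match and $B$ is a permutation of $A$. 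The next step is therefore to compute $x^A(X_AF)|_q$ in terms of the $F_{(m)}$: writing $F=\sum F_{(m)}$ and using that $X_A$ applied to $F_{(m)}$ raises or leaves parabolic order appropriately, one shows that $x^A(X_AF)|_q$ receives a contribution only from $F_{(o(A))}$, and that contribution is the ``leading symbol'' of $X_A$ (namely the constant-coefficient operator $\partial_{a_r}\cdots\partial_{a_1}$ up to the weight-$2$ scaling built into $T_i=2\partial_i$) applied to $F_{(o(A))}$ times $x^A$. Summing over all multiindices $A$ with $o(A)=m$ and accounting for the multiplicity $(\#A)!$ of orderings that yield the same monomial, together with the factor $(1/2)^{o(A)-\#A}$ coming from each vertical index where $X_{4n+i}=2\partial_i$, reconstructs $F_{(m)}$ exactly by the parabolic Taylor theorem — i.e. the statement is just the parabolic analogue of $f=\sum_\beta\frac{1}{\beta!}(\partial^\beta f)(0)x^\beta$, sorted by parabolic weight.

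The main obstacle is making precise the ``only the leading symbol contributes'' claim: one must verify that the lower-order correction terms hidden in $\xi_\alpha=X_\alpha+\order{1}$ and $R_i=T_i+\order{0}$, as well as the $x$-dependent coefficient $2\tensor{I}{\up i\down{\beta\alpha}}x^\beta$ inside $X_\alpha$ itself, do not contribute to $x^A(X_AF)|_q$ at weight exactly $o(A)$ — either they raise the parabolic order so the result vanishes at $q$, or (for the $X_\alpha$ coefficient) they produce terms of the same monomial type already counted, and a careful bookkeeping of weights shows the net coefficient is as claimed. Since this is the exact parabolic analogue of Lemma 2.6 in \cite{JerisonLee:1989}, and the entire apparatus of special frames and the vector field $P$ has been set up to mirror that paper, I would carry out the weight-counting argument as above and, following the convention stated at the end of the introduction, remark that the remaining verification is identical to the CR case and may be omitted.
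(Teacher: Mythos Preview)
Your proposal is correct and matches the paper's approach: the paper's own proof consists entirely of the remark that the argument is essentially the same as the corresponding lemma in \cite{JerisonLee:1989}, and your final paragraph arrives at exactly this conclusion after sketching the weight-counting argument that underlies it. (The paper cites Lemma~3.10 of \cite{JerisonLee:1989} rather than Lemma~2.6, but this is a citation detail, not a mathematical difference.)
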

\begin{proof}
The proof is essentially the same as the proof of Lemma 3.10 in \cite{JerisonLee:1989}. 
\end{proof}

\subsection[QC normal coordinates]{Quaternionic contact  normal coordinates}

Now, using the coordinates constructed above, we will develop a conformal factor $u$ so that the parabolic normal coordinates for the pseudohermitian structure $e^{2u}\eta$ satisfy a number of convenient normalization conditions on the QC curvature and torsion tensors.

We begin with a technical lemma describing the covariant derivative of a tensor field in terms of the action of the vector fields $X_a$ defined in the previous section.
\begin{lem} \label{lem:covD_vect_rel} If $\vphi$ is a tensor in $\order{m}$, the components of its covariant derivatives in terms of a special frame satisfy
\[ \vphi_{A,B} = X_B \vphi_A + \order{m-o(AB)+2}.\]
\end{lem}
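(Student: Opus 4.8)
The plan is to induct on $\#B$, with the engine supplied by three facts already on the table: that $\xi_a = X_a + \order{2-o(a)}$ (Corollary \ref{cor:XT}, written uniformly — an $\order 1$ error for $a\leq 4n$, an $\order 0$ error for $a>4n$); that the connection $1$-forms satisfy $\tensor{\omega}{\down a \up b}\in\order 2$ (Proposition \ref{prop:coframeconnection}(c) together with $\tensor{\omega}{\down a \up b}(q)=0$ from Theorem \ref{thm:paraboliccoordinates}); and, crucially, that the Biquard connection preserves the splitting $TM=H\oplus V$, so $\tensor{\omega}{\down\alpha\up i}=\tensor{\omega}{\down i\up\alpha}=0$. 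Before starting I would record two routine bookkeeping facts. First, $X_a$ is homogeneous of order $-o(a)$, so the composition $X_B$ carries $\order\ell\to\order{\ell-o(B)}$ on functions, and more generally any vector field in $\order k$ carries $\order\ell\to\order{k+\ell}$. Second, a tensor $\vphi$ lies in $\order m$ exactly when each of its special-frame components $\vphi_A$ lies in $\order{m-o(A)}$; both directions follow from $\theta^\alpha\in\order 1$, $\eta^i\in\order 2$, $\xi_\alpha\in\order{-1}$, $R_i\in\order{-2}$ and the product rule $\order p\otimes\order q\subset\order{p+q}$. In particular the asserted identity is consistent: the leading term $X_B\vphi_A$ automatically lies in $\order{m-o(AB)}$, so the claim is that its difference from $\vphi_{A,B}$ is two orders better.

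For the base case $\#B=1$, say $B=(b)$, I would write out the covariant derivative in the special frame,
\[
\vphi_{A,b} = \xi_b(\vphi_A) - \sum_k \tensor{\omega}{\down {a_k} \up c}(\xi_b)\,\vphi_{a_1\ldots c\ldots a_r}
\]
(with analogous $+\tensor{\omega}{\down c \up {a_k}}$ terms for any contravariant slots). In the first term, substitute $\xi_b = X_b + \order{2-o(b)}$; since $\vphi_A\in\order{m-o(A)}$, the correction is $(\order{2-o(b)})(\vphi_A)\subset\order{m-o(A)-o(b)+2}=\order{m-o(Ab)+2}$, leaving $X_b\vphi_A$ plus an $\order{m-o(Ab)+2}$ error. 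In each connection term, block-diagonality forces the summed index $c$ to have $o(c)=o(a_k)$, hence $\vphi_{a_1\ldots c\ldots a_r}\in\order{m-o(A)}$; pairing it with $\tensor{\omega}{\down {a_k} \up c}\in\order 2$ evaluated on $\xi_b\in\order{-o(b)}$ gives a term in $\order{m-o(A)+2-o(b)}=\order{m-o(Ab)+2}$. Summing yields $\vphi_{A,b}=X_b\vphi_A+\order{m-o(Ab)+2}$. This in particular shows $\vphi_{A,b}\in\order{m-o(Ab)}$, so by the second bookkeeping fact the iterated covariant derivative $\nabla^s\vphi$ again lies in $\order m$ for every $s$.

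For the inductive step with $B=(b_1,\ldots,b_s)$, $s\geq 2$, set $B'=(b_1,\ldots,b_{s-1})$ and $\psi=\nabla^{s-1}\vphi$, which lies in $\order m$ by the previous remark and has components $\psi_{AB'}=\vphi_{A,B'}$. Applying the base case to $\psi$ — treating $A$ and $B'$ jointly as tensor slots and $b_s$ as the differentiation index — gives
\[
\vphi_{A,B} = X_{b_s}(\vphi_{A,B'}) + \order{m-o(AB)+2}.
\]
Now invoke the inductive hypothesis $\vphi_{A,B'}=X_{B'}\vphi_A+\order{m-o(AB')+2}$; applying the homogeneous operator $X_{b_s}$ (order $-o(b_s)$) turns $X_{B'}\vphi_A$ into $X_B\vphi_A$ and the error into $\order{m-o(AB')+2-o(b_s)}=\order{m-o(AB)+2}$. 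Collecting terms gives $\vphi_{A,B}=X_B\vphi_A+\order{m-o(AB)+2}$, completing the induction.

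The only genuinely delicate point is keeping the error at order $(\,\cdot\,)+2$ rather than $(\,\cdot\,)+1$ in the connection-correction terms: this works precisely because the Biquard connection never mixes horizontal and vertical indices (so $o(c)=o(a_k)$ in every correction) and because those forms already vanish to order $2$ at $q$; everything else is the homogeneity bookkeeping above. One should also be mildly careful with the convention for iterated covariant derivatives — the last index is the outermost derivative, and all earlier indices, tensorial and differential alike, pick up connection corrections — but this is purely notational.
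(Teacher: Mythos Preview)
Your argument is correct and is precisely the standard one: the paper does not spell out a proof but defers to \cite[Lemma 3.2]{JerisonLee:1989}, and your induction on $\#B$ using $\xi_a=X_a+\order{2-o(a)}$, $\tensor{\omega}{\down a\up b}\in\order{2}$, and the block-diagonality $\tensor{\omega}{\down\alpha\up i}=\tensor{\omega}{\down i\up\alpha}=0$ of the Biquard connection is exactly that argument transported to the QC setting. Your identification of the block-diagonality as the reason the error stays at $+2$ (rather than degrading to $+1$ when a horizontal slot would otherwise be replaced by a vertical one) is the essential point.
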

\begin{proof}
The proof is essentially the same as the proof of Lemma 3.2 in \cite{JerisonLee:1989}.\end{proof}

\subsubsection{Parabolic coordinates under a conformal change}

Now let us consider the effect of changing the pseudohermitian structure by a conformal factor.  We let $\tilde{\eta}^i = e^{2u}\eta^i$ for some smooth function $u$.  Then $H$ remains the kernel of the three $1$-forms, and it is a simple calculation to see that for $\tilde{I}^i=I^i$ and $\tilde{g}=e^{2u}g$ we have
\[ d\tilde{\eta}^i(X,Y) = 2\tilde{g}(I^iX,Y), \text{ for all } X,Y \in H.\]
As mentioned in Proposition \ref{prop:confchange}, in \cite{Vassilevetal:2006} the authors demonstrate that the Reeb fields for the new structure are given by
\begin{equation} \label{eq:reebchange} \tilde{R}_i = e^{-2u}(R_i -  \tensor{I}{\down i \up \alpha \down \beta} u^\beta \xi_\alpha).\end{equation}
If we define $\tilde{\xi}_\alpha = \xi_\alpha$ and $\tilde{\theta}^\alpha = \theta^\alpha + \tensor{I}{\down i \up \alpha \down \beta}u^\beta \eta^i$, then $\tilde{\theta}^\alpha(\tilde{R}_i)=0$ and $\tilde{\eta}^i(\tilde\xi_\alpha)=0$.

The change of connection $1$-forms under the change of connection is slightly more complicated as shown in the following lemma.
\begin{lem} \label{lem:connchange} Suppose the conformal factor $u$ is order $m\geq 2$ with respect to $P$.  Then the connection $1$-forms of the Biquard connection transform as follows:
\begin{gather*}
  \tensor{\tilde\omega}{\down \alpha \up \beta} = \tensor{\omega}{\down \alpha \up \beta} + \order{m},\\
  \tensor{\tilde\omega}{\down i \up j} = \tensor{\omega}{\down i \up j} +\order{m}.
\end{gather*}
\end{lem}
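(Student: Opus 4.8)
The plan is to realize the transformation $\nabla\mapsto\tilde\nabla$ as a difference tensor and then feed it through the parabolic order calculus of Section~\ref{sec:coords}. Writing $\tilde\nabla=\nabla+A$ with $A_XY=\tilde\nabla_XY-\nabla_XY$ an $\End(TM)$-valued $1$-form, I would compute the transformed connection forms in the adapted frame $\tilde\xi_\alpha=\xi_\alpha$, $\tilde R_i$ of \eqref{eq:reebchange} with dual coframe $\tilde\theta^\alpha$, $\tilde\eta^i$, so that $\tensor{\tilde\omega}{\down\alpha\up\beta}=\tilde\theta^\beta(\tilde\nabla\tilde\xi_\alpha)$ and $\tensor{\tilde\omega}{\down i\up j}=\tilde\eta^j(\tilde\nabla\tilde R_i)$. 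The first task is an explicit formula for $A$: either quote the computation in \cite{Vassilevetal:2006} underlying Proposition~\ref{prop:confchange}, or re-derive it by imposing on $\nabla+A$ the five characterizing properties of the Biquard connection (Theorem~\ref{thm:biquardconn}) relative to $\tilde g=e^{2u}g$, $\tilde\eta=e^{2u}\eta$, and solving the resulting linear-algebraic system for $A$. In either case the point to extract is that $A$, and hence each of $\tensor{\tilde\omega}{\down\alpha\up\beta}-\tensor{\omega}{\down\alpha\up\beta}$ and $\tensor{\tilde\omega}{\down i\up j}-\tensor{\omega}{\down i\up j}$, is a universal tensorial expression, assembled by tensor product and contraction from $u$, $du$, $\nabla\,du$ and the fixed structure tensors $g$, $\tensor{I}{\down i\up\alpha\down\beta}$, $\theta^\alpha$, $\eta^i$, $\delta$, with the $e^{\pm2u}$ prefactors absorbed into the dependence on $u$.

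Granting such a formula, the conclusion is a bookkeeping exercise. Exterior differentiation preserves $P$-homogeneity degree (by Proposition~\ref{prop:coframeconnection}, $dx^\alpha$ has weight $1$ and $dt^i$ weight $2$, matching $x^\alpha$ and $t^i$), so $u\in\order{m}$ gives $du\in\order{m}$; covariant differentiation preserves the order filtration by Lemma~\ref{lem:covD_vect_rel}, so $\nabla\,du\in\order{m}$; and $e^{\pm2u}-1\in\order{m}$ since $u(q)=0$. The structure tensors above all lie in $\order{0}$, and tensor product adds orders, so each term of $\tensor{\tilde\omega}{\down\alpha\up\beta}-\tensor{\omega}{\down\alpha\up\beta}$ and of $\tensor{\tilde\omega}{\down i\up j}-\tensor{\omega}{\down i\up j}$ carries at least one factor drawn from $\{u,du,\nabla\,du\}$ and therefore lies in $\order{m}$ --- which is exactly the assertion.

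The delicate step, and the one that follows the corresponding lemma of \cite{JerisonLee:1989} closely, is the first: verifying that the transformation formula really has this clean tensorial shape, with nothing un-tensorial or genuinely lower-order smuggled in through the change of frame. Two things need care. First, the special frame for $\tilde\eta$ is \emph{not} the special frame for $\eta$, so the comparison is forced into the adapted frame $(\tilde\xi_\alpha,\tilde R_i)$; I would reduce the cost of passing from $(\xi_\alpha,R_i)$ to $(\tilde\xi_\alpha,\tilde R_i)$ to the observation that $\tilde R_i-R_i$, $\tilde\theta^\alpha-\theta^\alpha$ and $\tilde\eta^i-\eta^i$ are themselves built from $u$ and $du$, hence lie in $\order{m}$. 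Second, the hypothesis $m\geq2$ is what makes $u^\beta$ vanish at $q$ (as $u^\beta\in\order{m-1}$), so that $\tilde R_i$ agrees with $R_i$ at $q$ and the splitting $T_qM=H_q\oplus V_q$ on which the whole order calculus of Section~\ref{sec:coords} rests is unchanged; once this is in place the estimates for $\tensor{\tilde\omega}{\down\alpha\up\beta}$ and $\tensor{\tilde\omega}{\down i\up j}$ go through in parallel, essentially verbatim as in \cite{JerisonLee:1989}.
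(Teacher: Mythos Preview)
Your plan matches the paper's: quote the explicit transformation formulas from \cite{Vassilevetal:2006} and verify each term lies in $\order{m}$. The paper computes $\tensor{\tilde\omega}{\down i\up j}(X)-\tensor{\omega}{\down i\up j}(X)=\tensor{\veps}{\up j\down{ik}}\,du(I^kX)$ for $X\in H$ directly from $d\tilde\eta^j(\tilde R_i,X)$, and for $\tensor{\tilde\omega}{\down\alpha\up\beta}$ cites the formulas for $g(S_XY,Z)$ and $g(S_{\tilde R_i}X,Y)$ from \cite{Vassilevetal:2006}, then checks orders term by term.

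Your order bookkeeping, however, has a slip that matters. The claim that the structure tensors ``all lie in $\order{0}$'' is false: $\theta^\alpha\in\order{1}$, $\eta^i\in\order{2}$, and $g\in\order{2}$ as a covariant $2$-tensor. This is not harmless, because the formula for $g(S_{\tilde R_i}X,Y)$ contains the term $\tensor{\veps}{\down i\up{jk}}\,dh(R_k)\,g(I_jX,Y)$, and $dh(R_k)$ --- a \emph{vertical} derivative of the conformal factor --- lies only in $\order{m-2}$. So ``carries a factor from $\{u,du,\nabla du\}$, therefore $\order{m}$'' does not go through as stated: you must observe that this $\order{m-2}$ coefficient appears only in the $\tilde\eta^i$-component of the $1$-form (equivalently, as the paper phrases it, it is multiplied by $g\in\order{2}$), so that the contribution is $\order{(m-2)+2}=\order{m}$. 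The paper isolates exactly this term and makes the compensation explicit; your argument needs the same check rather than a blanket appeal to the order of $du$ and $\nabla du$ as tensors.
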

\begin{proof} From \cite[Prop 3.5]{Vassilevetal:2006} we can calculate the connection $1$-forms on $V$ directly.  In particular, if we write $\grad_H u = u^\alpha \xi_\alpha$, then for $X\in H$,
\begin{align*}
 \tensor{\tilde\omega}{\down i \up j}(X) &= d\tilde{\eta}^j(\tilde{R}_i, X)\\
   &= (2du\wedge \eta^i + d\eta^i)(R_i - I_i \grad_H u, X)\\
   &= 2du\wedge \eta^j(R_i,X) -2du\wedge\eta^j(I_i \grad_H u, X)\\
   &\qquad  + d\eta^j(R_i,X) -d\eta^j(I_i \grad_H u, X)\\
   &= -2\delta_i^j du(X) + d\eta^j(R_i,X) - 2g(I^jI_i \grad_H u, X)\\
   &= \tensor{\omega}{\down i \up j}(X) -2 \delta_i^j du(X) + 2\delta_i^j du(X) - \tensor{\veps}{\up j \down i \up k} g(I_k \grad_H u, X) \\
   &= \tensor{\omega}{\down i \up j}(X) +  \tensor{\varepsilon}{\up j \down {ik}}du(I^kX).
\end{align*}
Since $u\in \order{m}$, so is $du$, and so $ \tensor{\tilde\omega}{\down i \up j}= \tensor{\omega}{\down i \up j}+\order{m}$ acting on $H$.  A similar calculation for the action of $\tensor{\omega}{\down i \up j}$ on $V$ shows that $\tensor{\tilde\omega}{\down i \up j} = \tensor{\omega}{\down i \up j} + \order{m}$.

For the connection $1$-forms in the $H$ directions, we refer to equation $(5.5)$ and the equation immediately following equation  $(5.12)$ in \cite{Vassilevetal:2006}.  They let $S = \tilde{\omega} - \omega$,  $X,Y,Z \in H$, and denote the conformal factor by $\frac{1}{2h}=e^{2u}$.  Then
\begin{multline*}
-2h g(S_X Y, Z) = dh(X) g(Y,Z) - \frac{1}{2}dh(I_i X)d\eta^i(Y,Z) + dh(Y)g(Z,X) \\
 + \frac{1}{2}dh(I_i Y)d\eta^i(Z,X) - dh(Z) g(X,Y) 
+ \frac{1}{2}dh(I_i Z) d\eta^i(X,Y),
\end{multline*}
\begin{multline*}
g(S_{\tilde{R}_i} X,Y) =-\frac{1}{4}\big(\nabla dh(I_iX,Y) -\nabla dh(X,I_iY) - \tensor{\veps}{\down i \up {jk}} \nabla dh(I_jX,I_kY)\big) \\
 - \frac{1}{2h}\big( \tensor{\veps}{\down i \up {jk}} dh(I_kX)dh(I_j Y)+ dh(I_iX)dh(Y) - dh(I_iY)dh(X)\big) \\
 + \frac{1}{4n}\Big( -\laplace h + \frac{2}{h} \big|dh|_H\big|^2\Big)g(I_iX,Y) - \tensor{\veps}{\down i \up {jk}} dh(R_k)g(I_jX,Y).
\end{multline*}
Here we see that $\tensor{\tilde\omega}{\down \alpha \up \beta}$ and $\tensor{\omega}{\down \alpha \up \beta}$ differ by terms involving $dh$ and $\nabla^2 h|_H$.  The relation between $u$ and $h$ implies that $dh = -e^{-2u}du$ and \[\nabla^2 h=2e^{-2u}du\otimes du - e^{-2u}\nabla^2 u.\]  Since $u$ is order $m$, so is $du$ and from Lemma \ref{lem:covD_vect_rel}
\begin{align*}
 \nabla^2 u|_H  &= u_{\alpha \beta}\, \theta^\alpha \otimes \theta^\beta\\
   &= X_{\beta}X_{\alpha}u\,\theta^\alpha \otimes \theta^\beta +\order{m+2}
\end{align*}
which is also order $m$.  The last term, $dh(R_k)g(I_jX,Y)$ is also order $m$ since $dh(R_k)\in\order{m-2}$ and $g\in\order{2}$.
\end{proof}

Now we are in a position to relate the covariant derivative of the tilded connection to that of the untilded connection.  This will allow us to work only with the original connection by accounting for the orders of the error terms.  We have the following lemma.
\begin{lem} \label{lem:covderchange} Let $\vphi$ be an $s$-tensor and denote by $\nabla^r \vphi$ and $\tilde\nabla^r\vphi$ its $r$th covariant derivatives with respect to the original and rescaled connections respectively.  Let $A$ and $B$ be multiindices with $\#A=s$ and $\#B=r$, and let $\vphi_{A,B}$ and $\tilde\vphi_{A,B}$ denote the components of $\nabla^r \vphi$ and $\tilde\nabla^r \vphi$.  For a conformal change as described above with $u\in \order{m}$, $m\geq2$, we have
\[ \tilde\vphi_{A,B} = \vphi_{A,B} + \order{m-o(B)-1}.\]
Further, if $o(A)=s$ (i.e. $A$ contains no entries greater than $4n$) then
\[ \tilde{\vphi}_{A,B} = \vphi_{A,B} + \order{m-o(B)}.\]
\end{lem}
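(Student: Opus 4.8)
The plan is to prove Lemma~\ref{lem:covderchange} by induction on $r=\#B$, using the definition of covariant differentiation in terms of the connection $1$-forms together with the estimates on $\tilde\omega - \omega$ coming from Lemma~\ref{lem:connchange}, and the frame comparison $\xi_\alpha = X_\alpha + \order{1}$, $R_i = T_i + \order{0}$ from Corollary~\ref{cor:XT}. The base case $r=0$ is trivial: $\tilde\vphi_A = \vphi_A$ since $\tilde\xi_\alpha = \xi_\alpha$ and the component functions of a fixed tensor $\vphi$ do not change, so the error is $0 \in \order{m-o(B)-1}$ vacuously (for $B$ empty). For the inductive step, recall that for any $(s+r)$-tensor we have schematically
\begin{equation*}
  \tilde\vphi_{A,Bc} = \tilde\xi_c(\tilde\vphi_{A,B}) - \sum (\text{terms }\tilde\omega \cdot \tilde\vphi_{A,B}),
\end{equation*}
and similarly without tildes. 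I would subtract the two expressions and control each resulting piece.

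The three sources of error are: (i) the difference $\tilde\xi_c(\tilde\vphi_{A,B}) - \xi_c(\vphi_{A,B})$, which by the inductive hypothesis $\tilde\vphi_{A,B} = \vphi_{A,B} + \order{m-o(B)-1}$ and the fact that differentiating by a frame vector $\xi_c$ (order $o(c)-1$ as a differential operator, by Corollary~\ref{cor:XT} and the remarks on $\order{\cdot}$) lowers order by at most $o(c)$, contributes an error in $\order{m-o(B)-1-o(c)} = \order{m-o(Bc)-1}$ from the inductive term, plus $(\tilde\xi_c - \xi_c)\vphi_{A,B}$, where $\tilde\xi_c - \xi_c$ is $\order{}$-large enough (zero in the horizontal case, and for vertical indices the difference $\tilde R_i - R_i = e^{-2u}(R_i - I_i\grad_H u) - R_i$ involves $du\in\order{m}$ and $(e^{-2u}-1)R_i$ with $e^{-2u}-1\in\order{m}$, so this term is harmless when $m\geq 2$); (ii) $(\tilde\omega - \omega)\cdot\tilde\vphi_{A,B}$, where by Lemma~\ref{lem:connchange} each $\tilde\omega_a{}^b - \omega_a{}^b \in \order{m}$, and $\vphi_{A,B}\in\order{m-o(B)}$ wait — here one uses that $\vphi$ itself and its derivatives have some intrinsic order, but actually the clean way is: $\tilde\vphi_{A,B}$ need not be small, so one instead notes $(\tilde\omega-\omega)\in\order{m}$ multiplied by anything of order $\geq 0$ lands in $\order{m}$, which is contained in $\order{m-o(Bc)-1}$ since $o(Bc)\geq 1$; and (iii) $\omega\cdot(\tilde\vphi_{A,B} - \vphi_{A,B}) = \omega\cdot\order{m-o(B)-1}$, and since the connection forms $\omega_a{}^b\in\order{o(c)-1}$ in the relevant slot — more precisely from Proposition~\ref{prop:coframeconnection}(c), $\omega_a{}^b$ paired with $X_c$ is in $\order{o(c)-1}$... — actually the cleaner bookkeeping is that a covariant-derivative correction term $\omega_a{}^b(\xi_c)\vphi_{\cdots} $ has the same order behavior as $\xi_c$ acting, i.e. lowers order by $o(c)$, giving $\order{m-o(B)-1-o(c)}=\order{m-o(Bc)-1}$.

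The refined statement when $o(A)=s$ uses the sharper input that, for a tensor with no vertical tensor-indices, the terms $I_i{}^\alpha{}_\beta u^\beta$ appearing in $\tilde\theta^\alpha - \theta^\alpha$ and $\tilde R_i - R_i$ only enter through horizontal slots, which saves one unit of order; concretely, the only place the loss of an extra unit occurred above was in estimating $(\tilde\xi_c - \xi_c)\vphi_{A,B}$ or the $\tilde\omega$-correction acting on a vertical component of $\vphi$, and when $A$ has no vertical entries that interaction is absent, so the error improves from $\order{m-o(B)-1}$ to $\order{m-o(B)}$. I would carry this out by running the same induction but tracking separately the horizontal and vertical pieces of each correction term, observing that the vertical-slot corrections (the ones costing the extra unit) never appear. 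The main obstacle I anticipate is purely bookkeeping: keeping the order-counting consistent across the three error sources and making sure the inductive hypothesis is applied with the correct multiindex $B$ (of length $r$) rather than $Bc$ (of length $r+1$), so that the recursion closes with exactly the claimed order $\order{m-o(Bc)-1}$ rather than something one unit worse. Since the analogous statement is Lemma~3.3 (or its neighborhood) in \cite{JerisonLee:1989} and the structural ingredients — Lemma~\ref{lem:connchange}, Corollary~\ref{cor:XT}, and the $\order{\cdot}$ tensor-product rule — are all in place, the argument is routine and I would present it as such, with the careful order-tracking being the only point requiring attention.
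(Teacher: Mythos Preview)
Your approach is the one the paper intends (the paper simply defers to Lemma~3.5 of \cite{JerisonLee:1989}, which is precisely this induction on $\#B$ using the connection-form estimate of Lemma~\ref{lem:connchange}), and the order-counting in your inductive step, while loosely written, closes correctly.

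The gap is in your base case. You assert $\tilde\vphi_A = \vphi_A$ ``since $\tilde\xi_\alpha = \xi_\alpha$ and the component functions of a fixed tensor $\vphi$ do not change,'' but components depend on the frame, and the special frames differ in the vertical directions: $\tilde R_i = e^{-2u}(R_i - \tensor{I}{\down i \up\alpha\down\beta}u^\beta\xi_\alpha) \neq R_i$. If $A$ contains a vertical index $i$, then $\tilde\vphi_A - \vphi_A$ picks up contributions from $\tilde R_i - R_i$, in particular a term of the form $\tensor{I}{\down i\up\alpha\down\beta}u^\beta\,\vphi_{\ldots\alpha\ldots}$ with $u^\beta\in\order{m-1}$, so the difference lies in $\order{m-1}$ but is not zero. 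This is exactly the claimed bound for $B$ empty, so the repair is routine; but it must be done, and it is what drives the distinction between the two statements of the lemma. Indeed, the refined case $o(A)=s$ is precisely the situation in which $A$ has no vertical entries, so that $\tilde\vphi_A = \vphi_A$ exactly and the base-case error really is zero, buying the extra unit of order. Your later discussion of the refined statement correctly locates the loss of one unit in the vertical slots of $A$; you just failed to see that this loss already occurs at $r=0$ rather than only through the inductive step.
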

\begin{proof}
The proof is essentially the same as the proof of Lemma 3.5 in \cite{JerisonLee:1989}.\end{proof}

Finally, we will need to know how two sets of parabolic normal coordinates are related for conformally related pseudohermitian structures.  This is the content of the next lemma, which also corrects an error in the 1989 paper of Jerison and Lee \cite{JerisonLee:1989}.

\begin{thm} \label{thm:coordchange} Let $\Psi$ and $\tilde\Psi$ denote the parabolic exponential maps based at $q\in M$ of the pseudohermitian structures $\eta$ and $\tilde\eta=e^{2u}\eta$, respectively.  Suppose that $u \in \order{m}$ with $m\geq 2$.  Then considered as functions on $T_qM$ with the induced pseudohermitian structure, $\tilde\Psi - \Psi$ is order $m+1$.
\end{thm}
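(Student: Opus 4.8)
The plan is to compare the two parabolic exponential maps by tracking the orders of the defining data — the Christoffel symbols of the two connections — and then propagating these estimates through the ODE system \eqref{eq:curveeq}. First I would recall that, by Theorem \ref{thm:coordinates}, both $\Psi$ and $\tilde\Psi$ are built by flowing the vector field $P$ of \eqref{eq:parabolicvf} on $E=TM\oplus TM$ for unit time, starting from $(q,\iota(X+Y))$; the only difference between the two is that $\tilde\Psi$ uses the Christoffel symbols $\tilde\Gamma_{ab}^c$ of the Biquard connection of $\tilde\eta$ in place of $\Gamma_{ab}^c$. So the heart of the matter is the order of $\tilde\Gamma_{ab}^c - \Gamma_{ab}^c$ at $q$. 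Write $S = \tilde\nabla - \nabla$ for the difference tensor; Lemma \ref{lem:connchange} already tells us that $\tensor{\tilde\omega}{\down a \up b} = \tensor{\omega}{\down a \up b} + \order{m}$, i.e. $S \in \order{m-1}$ (the connection forms being one order higher than $S$ itself because a covariant derivative raises order by one, per Lemma \ref{lem:covD_vect_rel}). This is the key input, and I expect extracting exactly the right order bound on $S$ — being careful about the distinction between horizontal and vertical indices and the shift by one between $\order{\cdot}$ of a tensor and of its derivative — to be the main obstacle.

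Second, I would set up the comparison of the two flows via a Gronwall-type / Picard-iteration argument in the parabolic scaling. Using the parabolic dilation $\delta_t$, it suffices (by the scaling identity $\Psi(tX,t^2Y)=\gamma_{(X,Y)}(t)$) to compare $\gamma_{(X,Y)}$ and $\tilde\gamma_{(X,Y)}$ as $t\to 0$ and show the difference vanishes to order $m+1$ in the parabolic sense. Concretely, in the coordinates of \eqref{eq:coordparabola}, both curves satisfy a third-order ODE whose right-hand sides differ only through $S$-dependent terms; schematically $\tilde\gamma - \gamma$ satisfies an inhomogeneous linear ODE (after linearizing in the difference) whose inhomogeneity is built from $S$ evaluated along $\gamma$ and hence, by the order estimate on $S$ and the fact that $\gamma_{(tX,t^2Y)}$ has horizontal components of parabolic order $1$ and vertical of order $2$, is of parabolic order $m+1$ (one more than $\order{m-1}$ picked up by the two extra powers from the first-derivative factors $\dot\gamma^i\dot\gamma^j$ in the $\Gamma$-terms, combined with the initial-condition matching). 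Integrating twice against these inhomogeneous terms, with the trivial initial data $(\tilde\gamma-\gamma)(0)=0$, $(\dot{\tilde\gamma}-\dot\gamma)(0)=0$, $(D_t\dot{\tilde\gamma}-D_t\dot\gamma)(0)=0$ — all three initial conditions agree because the initial conditions in \eqref{eq:curveeq} are intrinsic to $(X,Y)$ — yields $\tilde\gamma_{(X,Y)}(t) - \gamma_{(X,Y)}(t) = \order{m+1}$ in $t$ uniformly for $(X,Y)$ in a bounded set, which is exactly the assertion $\tilde\Psi - \Psi \in \order{m+1}$ as functions on $T_qM$.

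Alternatively, and perhaps more cleanly, I would phrase this through the integral/recursive formulas for the special coframe in Proposition \ref{prop:coframeconnection}: since $\tilde\eta^i - \eta^i$, $\tilde\theta^\alpha - \theta^\alpha$ and $\tilde\omega - \omega$ can all be shown inductively to be $\order{m+1}$, $\order{m+1}$ and $\order{m}$ respectively (feeding Lemma \ref{lem:connchange} and \eqref{eq:reebchange}, and noting $\tilde R_i - R_i = -\tensor{I}{\down i \up \alpha \down \beta}u^\beta\xi_\alpha + \order{\cdot}$ with $u\in\order{m}$ so $u^\beta \in \order{m-1}$, giving $\tilde R_i - R_i \in \order{m-1}$, hence one order lower than the coframe difference by duality), the coordinate functions $\tilde x^\alpha = \tilde\theta^\alpha\circ$(frame data) and $x^\alpha$ differ by $\order{m+1}$, and similarly for $\tilde t^i - t^i$. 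Translating back, $\tilde\Psi - \Psi$ — which is exactly the change-of-coordinates map between the two parabolic normal coordinate systems at $q$ — is order $m+1$. The "correction of an error" alluded to before the statement presumably concerns precisely this order count: one must resist the temptation to claim order $m$ and check that the leading $S$-contribution to the curve actually cancels, leaving genuine order $m+1$; I would flag that cancellation explicitly, as it is the subtle point the earlier paper apparently got wrong.
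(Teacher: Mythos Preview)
Your first approach --- compare the parabolic geodesics $\gamma$ and $\tilde\gamma$ by estimating $B^a_{bc}=\tilde\Gamma^a_{bc}-\Gamma^a_{bc}$ via Lemma~\ref{lem:connchange} and then controlling $\sigma=\tilde\gamma-\gamma$ through the third-order ODE \eqref{eq:coordparabola} --- is exactly the route the paper takes. The initial conditions do all match (including $\ddot\sigma(0)=0$, which uses $B^a_{\beta\gamma}\in\order{m-1}$), and after expanding $\dddot\sigma$ the inhomogeneous remainder is indeed $O(s^{m-2})$.

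The genuine gap is in your ``Gronwall-type'' step, and you have misidentified the error being corrected: it is \emph{not} a hidden cancellation in the order count. The paper controls the scalar $\varphi(s)=\sum_a(|\sigma^a|^2+|\dot\sigma^a|^2+|\ddot\sigma^a|^2)$; differentiating and inserting the bound $|\dddot\sigma|\le C(\varphi^{1/2}+s^{m-2})$ yields $|\dot\varphi|\le C(\varphi+\varphi^{1/2}s^{m-2})$. The right-hand side is \emph{not} Lipschitz in $\varphi$ at $\varphi=0$, and the comparison ODE $\dot y=C(y+y^{1/2}s^{m-2})$, $y(0)=0$, has an infinite family of solutions branching from zero at any time $a\ge 0$. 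Ordinary Gronwall is therefore unavailable --- this is precisely the flaw in the Jerison--Lee argument --- and the paper instead invokes Hartman's maximal-solution comparison theorem (Theorem~\ref{thm:hartman}) to conclude $\varphi\le y_0=O(s^{2m-2})$, whence $|\ddot\sigma|=O(s^{m-1})$ and two integrations give $\sigma=O(s^{m+1})$. If you mean to sidestep this by a linear integral Gronwall on the first-order system $(\sigma,\dot\sigma,\ddot\sigma)$, that may be viable, but nothing in your sketch sets it up. Your alternative route through Proposition~\ref{prop:coframeconnection} is circular as stated: those recursions expand the special coframe \emph{in} a fixed set of parabolic normal coordinates and do not compare the two coordinate maps without already knowing how the coordinates themselves differ, which is the content of the theorem.
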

\begin{proof}
We will work in parabolic normal coordinates on $M$ given by the original pseudohermitian structure., written as always as $(x^\alpha, t^i)$.  Identifying a neighborhood of $0\in T_qM$ with a neighborhood of $q\in M$, we may write $\Psi(x,t)=(x,t)$.  Then writing $\tilde\Psi^a(x,t)=x^a + f^a(x,t)$ we need only show that $f^a$ is order $m+1$ for each $a=1,\ldots, 4n+3$.  Since these are parabolic normal coordinates defined by the parabolic geodesics of Theorem \ref{thm:coordinates}, this is equivalent to showing the for any particular $(x,t)$, $f^a(sx, s^2t) = O(s^{m+1})$ as $s\to0$.  Further, if we write $\gamma$ and $\tilde\gamma$ for the parabolic geodesics with initial data $(X,R)=(x^\alpha,t^i)$ at $q$ for the original and rescaled connections, we are reduced to showing that $\tilde\gamma(s) - \gamma(s) \in  O(s^{m+1})$ for small $s$.

Now, let us denote by $\Gamma_{bc}^a$ and $\tilde\Gamma_{bc}^a$ the Christoffel symbols of the two connections in these coordinates, and write $B_{bc}^a=\tilde\Gamma_{bc}^a - \Gamma_{bc}^a$ for the difference tensor.  From equation \eqref{eq:curveeq} we see that $\sigma(s) = \tilde\gamma(s)-\gamma(s)$ satisfies the third order equation
\begin{multline*}
 \dddot{\sigma}^a(s) = \big( \ddot{\gamma}^b(s)\dot\gamma^c(s)\Gamma_{bc}^a(\gamma(s)) - \ddot{\tilde\gamma}^b(s)\dot{\tilde\gamma}^c(s)\tilde\Gamma_{bc}^a(\tilde\gamma(s))\big)\\
  +2\big( \dot\gamma^b(s) \ddot\gamma^c(s)\Gamma_{bc}^a(\gamma(s)) - \dot{\tilde\gamma}^b(s) \ddot{\tilde\gamma}^c(s)\tilde\Gamma_{bc}^a(\tilde\gamma(s))\big)\\
  + \big(\dot\gamma^b(s)\dot\gamma^c(s)\dot\gamma^d(s)\partial_d \Gamma_{bc}^a(\gamma(s)) - \dot{\tilde\gamma}^b(s)\dot{\tilde\gamma}^c(s)\dot{\tilde\gamma}^d(s)\partial_d \tilde\Gamma_{bc}^a(\tilde\gamma(s))\big)\\
  + \big( \dot\gamma^b(s)\dot\gamma^d(s)\dot\gamma^e(s)\Gamma_{de}^c(\gamma(s))\Gamma_{bc}^a(\gamma(s)) \\- \dot{\tilde\gamma}^b(s)\dot{\tilde\gamma}^d(s)\dot{\tilde\gamma}^e(s)\tilde\Gamma_{de}^c(\tilde\gamma(s))\tilde\Gamma_{bc}^a(\tilde\gamma(s))\big),
\end{multline*}
with initial conditions $\sigma^a(0)=0$, $\dot\sigma^a(0)=0$ and $\ddot\sigma^a(0)=-B_{\beta\gamma}^a(0) x^\beta x^\gamma$.  From Lemma \ref{lem:connchange} we see that $B_{\beta\gamma}^a$ is order $m-1$ and so $\ddot\sigma(0)^a=0$ as well.

Let us simplify the notation by omitting the dependence on $s$.  To that end we write $\Gamma_{bc}^a=\Gamma_{bc}^a(\gamma(s))$, $\tilde\Gamma_{bc}^a = \tilde\Gamma(s)_{bc}^a(\tilde\gamma(s))$ and $\hat\Gamma_{bc}^a = \tilde\Gamma_{bc}^a(\gamma(s))$. Then the equation above becomes much more compact:
\begin{multline} \label{eq:sigmadiffeq}
 \dddot\sigma^a = (\ddot\gamma^b\dot\gamma^c\Gamma_{bc}^a - \ddot{\tilde\gamma}^b\dot{\tilde\gamma}^c\tilde\Gamma_{bc}^a) + 2(\dot\gamma^b \ddot\gamma^c \Gamma_{bc}^a - \dot{\tilde\gamma}^b \ddot{\tilde\gamma}^c \tilde\Gamma_{bc}^a) \\
 + (\dot\gamma^b\dot\gamma^c \dot\Gamma_{bc}^a - \dot{\tilde\gamma}^b\dot{\tilde\gamma}^c \dot{\tilde\gamma}_{bc}^a) + (\dot{\tilde\gamma}^b\dot{\tilde\gamma}^d\dot{\tilde\gamma}^e\tilde\Gamma_{de}^c\tilde\Gamma_{bc}^a - \dot{\tilde\gamma}^b\dot{\tilde\gamma}^d\dot{\tilde\gamma}^e\tilde\Gamma_{de}^c\tilde\Gamma_{bc}^a)
\end{multline}

Our goal is to estimate $\dddot\sigma^a$ and then derive bounds on it to prove the theorem.  Thus we shall expand the right-hand side of equation \eqref{eq:sigmadiffeq}.  We present one example of this expansion and leave it to the reader to complete the rest.
\begin{align*}
 \ddot\gamma^b\dot\gamma^c\Gamma_{bc}^a - \ddot{\tilde\gamma}^b\dot{\tilde\gamma}^c\tilde\Gamma_{bc}^a &= (\ddot\gamma^b - \ddot{\tilde\gamma}^b)\dot{\tilde\gamma}^c\tilde\Gamma_{bc}^a + \ddot\gamma^b(\dot\gamma^c-\dot{\tilde\gamma}^c)\tilde\Gamma_{bc}^a \\
   & \quad + \ddot\gamma^b\dot\gamma^c(\hat\Gamma_{bc}^a - \tilde\Gamma_{bc}^a) + \ddot\gamma^b \dot\gamma^c (\Gamma_{bc}^a - \hat\Gamma_{bc}^a)\\
    &= -\ddot\sigma^b\dot{\tilde\gamma}^c\tilde\Gamma_{bc}^a - \ddot\gamma^b\dot\sigma^c\tilde\Gamma_{bc}^a 
+ \ddot\gamma^b\dot\gamma^c(\hat\Gamma_{bc}^a - \tilde\Gamma_{bc}^a) - \ddot\gamma^b \dot\gamma^c B_{bc}^a.
\end{align*}
Using this technique we have the following bound for $|\dddot\sigma^a|$,
\begin{multline*}
|\dddot\sigma^a| \leq  |\ddot\sigma^b\dot{\tilde\gamma}^c\tilde\Gamma_{bc}^a| + |\ddot\gamma^b\dot\sigma^c\tilde\Gamma_{bc}^a| + |\ddot\gamma^b\dot\gamma^c(\hat\Gamma_{bc}^a - \tilde\Gamma_{bc}^a)| + |\ddot\gamma^b \dot\gamma^c B_{bc}^a| \\
  +2|\dot\sigma^b\ddot{\tilde\gamma}^c\tilde\Gamma_{bc}^a| + 2|\dot\gamma^b\ddot\sigma^c\tilde\Gamma_{bc}^a| + 2|\dot\gamma^b\ddot\gamma^c(\hat\Gamma_{bc}^a - \tilde\Gamma_{bc}^a)| + 2|\dot\gamma^b \ddot\gamma^c B_{bc}^a| \\
  +|\dot\sigma^b\dot{\tilde\gamma}^c\dot{\tilde\Gamma}_{bc}^a| + |\dot\gamma^b\dot\sigma^c\dot{\tilde\Gamma}_{bc}^a| + |\dot\gamma^b\dot\gamma^c(\dot{\hat\Gamma}_{bc}^a - \dot{\tilde\Gamma}_{bc}^a)| + |\dot\gamma^b \dot\gamma^c \dot{B}_{bc}^a| \\
  +|\dot\sigma^b\dot{\tilde\gamma}^d\dot{\tilde\gamma}^e\tilde\Gamma_{de}^c\tilde\Gamma_{bc}^a| + |\dot\gamma^b\dot\sigma^d\dot{\tilde\gamma}^e\tilde\Gamma_{de}^c\tilde\Gamma_{bc}^a|+ |\dot\gamma^b\dot\gamma^d\dot\sigma^e\tilde\Gamma_{de}^c\tilde\Gamma_{bc}^a| \\
  + |\dot\gamma^b\dot\gamma^d\dot\gamma^e B_{de}^c\tilde\Gamma_{bc}^a| + |\dot\gamma^b\dot\gamma^d\dot\gamma^e (\hat\Gamma_{de}^c-\tilde\Gamma_{de}^c)\tilde\Gamma_{bc}^a| \\
  + |\dot\gamma^b\dot\gamma^d\dot\gamma^e \Gamma_{de}^cB_{bc}^a| + |\dot\gamma^b\dot\gamma^d\dot\gamma^e \Gamma_{de}^c(\hat\Gamma_{bc}^a-\tilde\Gamma_{bc}^a)|.
\end{multline*}

Since each of $\gamma^a$, $\tilde\gamma^a$, $\Gamma_{bc}^a$ and $\tilde\Gamma_{bc}^a$ is a smooth function, by further shrinking the neighborhood of $q$ we are considering we may bound each of these functions and their derivatives by a uniform constant.  Further, from Lemma \ref{lem:connchange} and $B_{bc}^a = dx^a(\tilde\nabla_{\partial_b}\partial_c - \nabla_{\partial_b}\partial_c)$, we have $ B_{bc}^a \in\order{m-o(b)}$.
Since \mbox{$\dot\gamma^b(s) = O(s^{o(b)-1})$}, we therefore have
\[ B_{bc}^a\dot\gamma^b \in\order{m-1}, \text{ and } B_{bc}^a \ddot\gamma^b,\ \dot{B}_{bc}^a\dot\gamma^b \in \order{m-2}.\]
Finally, since $\tilde\Gamma_{bc}^a$ is a smooth function, it satisfies a Lipschitz estimate
\[ |\hat\Gamma_{bc}^a(s) - \tilde\Gamma_{bc}^a(s)| \leq C |\gamma(s)-\tilde\gamma(s)| \leq C \sum_b |\sigma^b(s)|.\]

Now we define $\vphi(s) = \sum_a \big( |\ddot\sigma^a(s)|^2 + |\dot\sigma^a(s)|^2 + |\sigma^a(s)|^2\big)$, so that
\begin{align*}
 |\dddot\sigma^a(s)| &\leq C\Big( \sum_b \big( |\ddot\sigma^b(s)| +|\dot\sigma^b(s)| + |\sigma^b(s)|\big) + s^{m-2} \Big) \\
 &\leq C( \vphi(s)^{1/2} + s^{m-2}).
\end{align*}
Taking the derivative of $\vphi$ we find
\begin{align*}
 | \dot\vphi(s)| &= 2 \Big|\sum_b (\dddot\sigma^b(s)\ddot\sigma^b(s) + \ddot\sigma^b(s) \dot\sigma^b(s) + \dot\sigma^b(s) \sigma^b(s)) \Big| \\
  & \leq C( \vphi(s) + \vphi(s)^{1/2}s^{m-2}).
\end{align*}
 
It is simple to check that the ODE $ \dot{y}(s) = C(y(s) + y(s)^{1/2}s^{m-2})$ with initial condition $y(0)=0$ has a family of solutions given by \[y_a(s)= \begin{cases}0,&s\leq a \\ \frac{C^2}{4}e^{Cs}\beta^2_{m-2}(s;a),& s\geq a\end{cases}\]  where $\beta_k(s;a) = \int_a^s e^{-Ct/2} t^k\, dt$.   A routine calculation shows that \[\beta_k(s;a) = O((s-a)^{k+1}),\] so $y_0(s) = O(s^{2m-2})$.  Further, $y_0(s)\geq y_a(s)$ for all $a\geq 0$ and hence by Theorem \ref{thm:hartman} below, $\vphi(s) = O(s^{2m-2})$, which implies $\sigma(s) = O(s^{m+1})$.  This completes the proof.
\end{proof}

Theorem \ref{thm:coordchange} above relies on a technical comparison theorem for ODEs given in \cite[Theorem III.4.1]{Hartman:1973}.  The paper which inspired this work, \cite{JerisonLee:1989}, fails to recognize the infinite family of solutions to the ODE $ \dot{y}(s) = C(y(s) + y(s)^{1/2}s^{m-2})$, and so their proof is incorrect as it stands.  The argument given above also completes their proof, with the following theorem.
\begin{thm}[\cite{Hartman:1973}] \label{thm:hartman}
Let $U(t,u)$ be continuous on an open $(t,u)$-set $E$ and $u=u^0(t)$ the maximal solution of
\[ \dot{u}=U(t,u), \quad u(t_0)=u_0.\]
Let $v(t)$ be a continuous function on $[t_0,t_0+a]$ satisfying the conditions $v(t_0)\leq u_0$, $(t,v(t)) \in E$, and $v(t)$ has a right derivative $D_R v(t)$ on $t_0 \leq t \leq t_0+a$ such that
\[ D_R v(t) \leq U(t,v(t)).\]
Then, on a common interval of existence of $u^0(t)$ and $v(t)$, 
\[ v(t) \leq u^0(t).\]
\end{thm}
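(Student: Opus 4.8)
I would prove this classical scalar‑ODE comparison theorem by the standard $\varepsilon$‑perturbation argument, in three steps. First I would reduce to a perturbed problem: fix a compact subinterval $[t_0,t_0+a']$ of the common interval of existence of $u^0$ and $v$, and for each small $\varepsilon>0$ use Peano's existence theorem (applicable because $U$, hence $U+\varepsilon$, is continuous on $E$) to obtain a solution $u_\varepsilon$ of
\[ \dot u = U(t,u)+\varepsilon, \qquad u(t_0)=u_0+\varepsilon. \]
By the classical theory of maximal solutions and their continuous dependence on parameters (contained in \cite{Hartman:1973}), once $\varepsilon$ is small enough $u_\varepsilon$ is defined on all of $[t_0,t_0+a']$ and $u_\varepsilon(t)\to u^0(t)$ uniformly there as $\varepsilon\downarrow 0$. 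It therefore suffices to prove $v(t)\le u_\varepsilon(t)$ on $[t_0,t_0+a']$ for each such $\varepsilon$ and then let $\varepsilon\downarrow 0$.

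The second, main step is the inequality $v\le u_\varepsilon$, which I would establish by a first‑crossing argument. Put $w=u_\varepsilon-v$; it is continuous, $w(t_0)=(u_0+\varepsilon)-v(t_0)\ge\varepsilon>0$, and it has a right derivative everywhere since $u_\varepsilon$ is differentiable. The one local computation needed is that at any $\tau$ where $w(\tau)=0$, so $v(\tau)=u_\varepsilon(\tau)$, one has
\[ D_R w(\tau)=\dot u_\varepsilon(\tau)-D_R v(\tau)\ge\big(U(\tau,v(\tau))+\varepsilon\big)-U(\tau,v(\tau))=\varepsilon>0, \]
using the hypothesis $D_R v(\tau)\le U(\tau,v(\tau))$. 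Now suppose $v\le u_\varepsilon$ fails, so $w(t^*)<0$ for some $t^*\in(t_0,t_0+a']$, and set $\tau=\sup\{t\in[t_0,t^*]:w(t)\ge 0\}$. Continuity of $w$ and $w(t_0)>0$ force $t_0<\tau<t^*$, $w(\tau)=0$, and $w<0$ on all of $(\tau,t^*]$. But $D_R w(\tau)>0$ means $w(\tau+h)>w(\tau)=0$ for all sufficiently small $h>0$, which is incompatible with $w<0$ on $(\tau,t^*]$. This contradiction gives $w\ge 0$ on $[t_0,t_0+a']$; letting $\varepsilon\downarrow 0$ yields $v\le u^0$ there, and since $a'$ was arbitrary the theorem follows.

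I expect the genuinely delicate ingredient to be not the crossing argument itself — which is elementary once $\tau$ is taken to be the supremum of $\{t\le t^*:w(t)\ge 0\}$, the alternative choice $\inf\{w\le 0\}$ failing to exclude a ``touch and rebound'' of $w$ at $\tau$ — but rather the input quoted in the first step, namely that the perturbed solutions exist on a fixed compact subinterval for all small $\varepsilon$ and converge to $u^0$. This is precisely where the mere continuity of $U$ (in place of a Lipschitz condition) enters, and it is the reason the comparison function must be the \emph{maximal} solution. Everything is purely a statement about scalar ODEs; in the present paper the theorem is invoked only to bound the scalar quantity $\vphi(s)$ arising in the proof of Theorem \ref{thm:coordchange}.
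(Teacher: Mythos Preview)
Your argument is the classical $\varepsilon$-perturbation and first-crossing proof of this comparison lemma, and it is correct as written; the choice $\tau=\sup\{t\le t^*:w(t)\ge 0\}$ is the right one, and your identification of the dependence-on-parameters step (existence of $u_\varepsilon$ on a fixed interval and $u_\varepsilon\to u^0$) as the place where maximality of $u^0$ is genuinely used is accurate.

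There is nothing to compare against, however: the paper does not give its own proof of this theorem. It is quoted verbatim as Theorem III.4.1 of \cite{Hartman:1973} and used as a black box in the proof of Theorem \ref{thm:coordchange} to conclude $\vphi(s)\le y_0(s)$. So your proposal is not an alternative to the paper's argument but rather a sketch of the standard proof from the cited reference, and in that capacity it is fine.
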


\subsubsection{Curvature and torsion normalizations} \label{sec:curvandtornormalizations}

Now we turn to the coordinate normalization that is the focus of this paper.  For each QC pseudohermitian structure, we will construct a $2$-tensor $Q$, defined in such a way that by considering the tensors $Q$ and $\tilde Q$ determined by a conformal change, we may recover the symmetric covariant Hessian of the conformal factor.  Since the antisymmetric covariant Hessian is determined by the first derivatives and torsion, this completely determines the Hessian of the conformal factor.  In this section we use the common notation $F_{(ab)} = \frac{1}{2}(F_{ab} + F_{ba})$ for the symmetric part of the tensor $F_{ab}$.  More generally, $F_{(A)} = \frac{1}{(\#A)!}\sum_{\sigma \in S_{\#A}} F_{\sigma A}$, where the sum is over the permutation group on $\#A$ letters, and $\sigma \in S_{\#A}$ acts on the multiindex $A$ by permuting the indices.  That is, $F_{(A)}={Sym}(F)_A$, where $
{Sym}(F)$ is the symmetric part of $F$.

Let $u\in\order{m}$ be a fixed conformal factor.
From the transformation rules for $\tau_{\alpha\beta}$, $\mu_{\alpha\beta}$ and $S$ in Proposition \ref{prop:confchange} we know that the tensor $L_{\alpha\beta}= \frac{1}{2} \tau_{\alpha\beta} + \mu_{\alpha\beta} + \frac{S}{32n(n+2)}g_{\alpha\beta}$ transforms as
\begin{equation}\label{eq:Lchange} \tilde{L}_{\alpha\beta} = L_{\alpha\beta} - u_{(\alpha\beta)} + \order{m-1},\end{equation}
and a routine calculation shows the torsion tensor $\tensor{T}{\up \alpha \down{ij}} = d\theta^\alpha(R_i,R_j)$ changes as
\[ \tensor{\tilde T}{\up \alpha \down {ij}} = \tensor{T}{\up \alpha \down {ij}} + (\tensor{I}{\down j \up \alpha \down \beta} \tensor{u}{\up \beta \down i} - \tensor{I}{\down i \up \alpha \down \beta}\tensor{u}{\up \beta \down j}) + \order{m-2}.\]
From this and the fact that the volume form on $V$ provides an isomorphism between $V$ and $\bigwedge^2 V$, 
\begin{equation}\label{eq:torsionchange} \tensor{\tilde T}{\down {\alpha jk}}\tensor{\tilde\veps}{\down i \up {jk}} = \tensor{T}{ \down {\alpha jk}}\tensor{\veps}{\down i \up{jk}} + \tensor{A}{\down {i\alpha} \up{j\beta}}u_{\beta j}+ \order{m-2},\end{equation}
where $\tensor{A}{\down{i\alpha} \up {j\beta}}=2\tensor{\veps}{\up {jk} \down i} \tensor{I}{\down {k\alpha}\up \beta}$.  The operator $A$  is invertible because its minimal polynomial is $m_A(s) = s^2+2s-8$, which follows from
\begin{align*}
\tensor{A}{\down{i\alpha}\up{j\beta}}\tensor{A}{\down{j\beta}\up{k\gamma}} &= 4 \tensor{\veps}{\up{jl}\down i} \tensor{\veps}{\up {km}\down j} \tensor{I}{\down {l\alpha}\up\beta}\tensor{I}{\down{m\beta}\up \gamma} \\
	&= 4(\delta^{kl}\delta_i^m - \delta^{lm}\delta_i^k)(\tensor{\veps}{\down {lm}\up p} \tensor{I}{\down {p\alpha}\up \gamma} - \delta_{lm} \delta_\alpha^\gamma)\\
	&=4(\tensor{\veps}{\up k\down {i}\up p} \tensor{I}{\down {p\alpha}\up \gamma} - \delta_{i}^k \delta_\alpha^\gamma) + 4(3\delta_i^k \delta_\alpha^\gamma)\\
	&= -2\tensor{A}{\down {i\alpha} \up {k\gamma}} +8 \delta_i^k\delta_\alpha^\gamma.
\end{align*}
Because $A$ is constructed from the metric tensors on $H$ and $V$, the almost complex structures and the $V$-volume form, we have $\tilde{A} = A + \order{m}$ and $\tilde{A}^{-1} = A^{-1} + \order{m}$.

Let us now define the tensor 
\begin{equation} \label{eq:defineB}
B_{ij}  = R_{kl\alpha\beta}\tensor{\veps}{\up {kl} \down i} \tensor{I}{\down j \up {\alpha\beta}}.
\end{equation}
Since the connection preserves the decomposition of the tangent bundle, we know that $R_{\alpha i j \beta} = R_{j\alpha i\beta}=0$, and hence the first Bianchi identity \cite[Equations (3.1) and (3.2)]{Vassilevetal:2007} shows
\begin{equation}  R_{kl\alpha\beta} = T_{\beta l\gamma}\tensor{T}{\up\gamma\down{k\alpha}} - T_{\beta k \gamma}\tensor{T}{\up \gamma \down {l\alpha}} + T_{\beta m \alpha}\tensor{T}{\up m\down {kl}} +T_{\beta kl,\alpha}  + T_{\beta l \alpha,k} -T_{\beta k \alpha,l}.\end{equation}
Since the curvature is tensorial, by working at a point where the frame is parallel so that $\nabla I_i=0$, we may simplify this using Propositions \ref{prop:torsionprop} and \ref{prop:curvprop} as
\begin{multline}\label{eq:firstbianchi}
R_{kl\alpha\beta} = T_{\beta l\gamma}\tensor{T}{\up\gamma\down{k\alpha}} - T_{\beta k \gamma}\tensor{T}{\up \gamma \down {l\alpha}}  + T_{\beta kl,\alpha}+ (\mu_{\beta\gamma,k} \tensor{I}{\down l \up {\gamma} \down \alpha} - \mu_{\beta\gamma,l} \tensor{I}{\down k \up {\gamma} \down \alpha} )  \\
 - \frac{1}{8n(n+2)}S\, \veps_{klm}\Big( \mu_{\beta\gamma} \tensor{I}{\up {m\gamma} \down \alpha} + \frac{1}{4}(\tau_{\beta\gamma} \tensor{I}{\up {m\gamma} \down \alpha} - \tau_{\alpha\gamma}\tensor{I}{\up {m\gamma} \down \beta})\Big)  \\
 + \frac{1}{4}(\tau_{\alpha \gamma,l} \tensor{I}{\down k \up \gamma \down \beta} - \tau_{\alpha \gamma,k} \tensor{I}{\down l \up \gamma \down \beta} - \tau_{\beta \gamma,l} \tensor{I}{\down k \up \gamma \down \alpha} + \tau_{\beta \gamma,k} \tensor{I}{\down l \up \gamma \down \alpha}).
\end{multline}
Because $\mu_{\alpha\beta}$ and $\tau_{\alpha\beta}$ are symmetric and trace-free, contracting this with an almost complex structure on the horizontal indices yields
\begin{equation}\label{eq:firstbianchi2} 
R_{kl\alpha\beta}\tensor{I}{\down j \up {\alpha\beta}} = T_{\beta kl,\alpha}\tensor{I}{\down j \up {\alpha\beta}} + (T_{\beta l\gamma}\tensor{T}{\up\gamma\down{k\alpha}} - T_{\beta k \gamma}\tensor{T}{\up \gamma \down {l\alpha}}) \tensor{I}{\down j \up {\alpha\beta}}.
\end{equation}
We know from Proposition \ref{prop:confchange} that the terms in parentheses change under a conformal rescaling by terms of order at least $m-2$, which we will be able to ignore below.  Thus
\[ \tilde{B}_{ij} - B_{ij} = \tilde{T}_{\beta kl,\alpha}\tensor{I}{\down j \up {\alpha\beta}}\tensor{\tilde{\veps}}{\up {kl} \down i} - T_{\beta kl,\alpha}\tensor{I}{\down j \up {\alpha\beta}}\tensor{{\veps}}{\up {kl} \down i} + \order{m-2}.\]
Notice that we have already calculated $\tilde{T}_{\beta kl} - T_{\beta kl}$ above and seen that it depended on a second derivative of $u$, one derivative each in the vertical and horizontal directions.  Since we are now taking another derivative and $\tensor{I}{\down j \up{\alpha\beta}}$ is antisymmetric in the horizontal indices, we expect that the contraction should result in only a second covariant derivative of $u$ in the vertical direction.  Modulo terms of order $m-3$, we have
\begin{align*}
\tilde{T}_{\beta kl,\alpha}\tensor{I}{\down j \up {\alpha\beta}}\tensor{\tilde{\veps}}{\up {kl} \down i} - T_{\beta kl,\alpha}\tensor{I}{\down j \up {\alpha\beta}}\tensor{{\veps}}{\up {kl} \down i} &= (\tilde{T}_{\beta kl}\tensor{\tilde{\veps}}{\up {kl} \down i} - T_{\beta kl}\tensor{{\veps}}{\up {kl} \down i})_{,\alpha}\tensor{I}{\down j \up {\alpha\beta}} + \ldots \\
&= 2 \tensor{\veps}{\up{kl}\down i} \tensor{I}{\down {l\beta} \up \gamma}  \tensor{I}{\down j \up {\alpha\beta}}u_{k\gamma\alpha} + \ldots \\
&= 4 \tensor{I}{\down i \up {\alpha\gamma}} u_{j\gamma\alpha} + \ldots \\
&= 4 \tensor{I}{\down i \up {\alpha\gamma}} u_{\gamma\alpha j} + \ldots \\
&= (16 n) u_{ij} + \ldots  .
\end{align*}
Here we are making use of the fact that commuting vertical covariant derivatives with horizontal ones depends only on terms of order $m-2$ or greater and that \[u_{\gamma\alpha j} = u_{\alpha\gamma j} + 2 \tensor{I}{\up k \down {\alpha\gamma}} u_{kj} + \order{m-3}.\]

This implies that under a conformal change, the symmetric part of the tensor $B$ transforms as
\begin{equation}\label{eq:Bchange}
\tilde{B}_{(ij)} = B_{(ij)} + (16n) u_{(ij)} + \order{m-3}.
\end{equation}

With these identities in mind, we define the tensor $Q$ as
\begin{gather*}
 Q_{\alpha \beta} =  L_{\alpha\beta} +\frac{1}{8(n+2)} S g_{\alpha \beta},\\
 Q_{\alpha i} = Q_{i \alpha}= -\tensor{(A^{-1})}{\down {i\alpha}\up{j\beta}} T_{\beta kl}\tensor{\veps}{\down j \up{kl}},\\
 Q_{ij} = -\frac{1}{16n}B_{(ij)}.
\end{gather*}
Then according to \eqref{eq:scalarchange}, \eqref{eq:Lchange}, \eqref{eq:torsionchange} and \eqref{eq:Bchange}, under a conformal change with $u\in \order{m}$, $Q$ changes as
\begin{gather*}
\tilde{Q}_{\alpha\beta} - Q_{\alpha\beta} = -u_{(\alpha\beta)} + (\laplace_H u)g_{\alpha\beta} + \order{m-1}\\
\tilde{Q}_{i\alpha} - Q_{i\alpha} = -u_{i \alpha} + \order{m-2} \\
\tilde{Q}_{ij} - Q_{ij} = -u_{(ij)} + \order{m-3}.
\end{gather*}

Now, from section \ref{sec:parabolictaylor}, we recall that the vector field $P= x^a \xi_a$ is the generator of the parabolic dilations that inspired the parabolic normal coordinate construction.  Under the conformal change, by Theorem \ref{thm:coordchange} the new coordinates satisfy $\tilde{x}^a = x^a + \order{m+1}$, and hence $\tilde P = P + \order{m-1}$, since $P$ depends on both $\xi_\alpha \in \order{-1}$ and $R_i\in \order{-2}$.  To study the Taylor expansion of $\tilde{Q}$ we define the scalar $\Phi = Q(P,P)= x^a x^b Q_{ab}$.  Then by the above comments and the fact that a smooth $2$-tensor is at least order $2$,  we have
\[ \tilde \Phi = \tilde{Q}(\tilde{P}, \tilde{P}) = \tilde{Q}(P, P) + \order{m+1} = x^a x^b \tilde{Q}_{ab} + \order{m+1}.\]

Recall that the vector fields $X_\alpha$ and $T_i$ defined in Corollary \ref{cor:XT} are the standard left invariant frame on the quaternionic Heisenberg group, and covariant differentiation using the Biquard connection is represented as a perturbation of the derivatives with respect to $X_\alpha$ and $T_i$ according to Lemma \ref{lem:covD_vect_rel}.  We let $\Hlaplace = -\sum_\alpha X_\alpha X_\alpha$ denote the standard sublaplacian on $\QH^n$.  Combining the above calculations with our definitions of $\Phi$ and $\tilde \Phi$, we have
\begin{align*}
\tilde \Phi &= \Phi - (u_{(\alpha\beta)}x^\alpha x^\beta +  2u_{i\alpha}t^i x^\alpha +  u_{(ij)}t^i t^j) + \laplace_H u g_{\alpha \beta} x^\alpha x^\beta +\order{m+1}\\
 &= \Phi - (x^\alpha x^\beta X_\beta X_\alpha u +  2t^i x^\alpha X_\alpha T_i u + t^i t^j T_j T_i u) + |x|^2\Hlaplace u +\order{m+1}
\end{align*}

Now we let $\PP_m$ denote the space of homogeneous polynomials in $x$ and $t$ of order $m$.  For $u\in \PP_m$, it is immediate that 
\begin{equation*}\tilde{\Phi}_{(m)} = \Phi_{(m)} -  (x^\alpha x^\beta X_\beta X_\alpha u + 2 t^i x^\alpha X_\alpha T_i u +  t^i t^j T_j T_i u) + |x|^2\Hlaplace u .\end{equation*}
Further, 
\begin{align*}
m^2u = P^2 u &= (x^\alpha X_\alpha + t^i T_i)^2 u \\
  &= x^\alpha X_\alpha u+ x^\alpha x^\beta X_\alpha X_\beta u+ 2t^i x^\alpha X_\alpha T_i u+ 2t^i T_i u + t^i t^j T_i T_ju \\
  &= x^\alpha x^\beta X_\beta X_\alpha u + 2t^i x^\alpha X_\alpha  T_i u + t^i t^j T_j T_i u + t^i T_i u + Pu\\
  &= x^\alpha x^\beta X_\beta X_\alpha u + 2 t^i x^\alpha  X_\alpha  T_i u + t^i t^j T_j T_i u + t^i T_i u + mu.
\end{align*}
Combining this with the above calculation for $\tilde{\Phi}_{(m)}$, we have
\begin{equation}\label{eq:Phichange} \tilde{\Phi}_{(m)}= \Phi_{(m)} - m(m-1)u + t^i T_i u + |x|^2 \Hlaplace u,\end{equation}
whenever $u \in \PP_m$.

\begin{lem}\label{lem:invertop} The operator $L_m = |x|^2 \Hlaplace  + t^i T_i  - m(m-1)$ is invertible on $\PP_m$ for $m\geq 3$.  For $m=2$, $L_2$ has kernel the subspace of $\PP_2$ spanned by $t^i$, $i=1,2,3$, and is invertible on the subspace depending only on $x^\alpha$.
\end{lem}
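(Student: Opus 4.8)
The plan is to work entirely on the model group $\QH^n$, where everything is explicit, and to reduce the invertibility of $L_m$ to that of a finite family of ordinary Euclidean operators on spaces of homogeneous polynomials whose spectra can be written down. First I would put $\Hlaplace$ in coordinates using $X_\alpha = \partial_\alpha + 2\tensor{I}{\up i \down{\beta\alpha}}x^\beta\partial_{t^i}$ from Corollary \ref{cor:XT}. A direct computation, using only that each $I^i$ is trace-free and $g$-skew and that $I^iI^j = -\delta^{ij}\mathrm{Id} + \veps^{ijk}I^k$, gives
\[ \Hlaplace \;=\; -\laplace_x \;-\; 4\,\tensor{I}{\up i \down{\beta\alpha}}x^\beta\partial_\alpha\partial_{t^i} \;-\; 4|x|^2\laplace_t, \]
with $\laplace_x = \sum_\alpha\partial_\alpha^2$ and $\laplace_t = \sum_i\partial_{t^i}^2$; the quadratic-in-$x$ piece collapses to exactly $-4|x|^2\laplace_t$ because $\sum_\alpha\tensor{I}{\up i \down{\beta\alpha}}\tensor{I}{\up j \down{\gamma\alpha}} = \delta^{ij}\delta_{\beta\gamma} - \veps^{ijk}\tensor{I}{\up k \down{\beta\gamma}}$ and $\tensor{I}{\up k \down{\beta\gamma}}x^\beta x^\gamma = 0$. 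Since each term of $L_m = |x|^2\Hlaplace + t^iT_i - m(m-1)$ is homogeneous of weight $0$ for the parabolic dilation generated by $P$, the operator $L_m$ maps $\PP_m$ into itself.

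Next I would introduce the bidegree decomposition $\PP_m = \bigoplus_{k=0}^{\lfloor m/2\rfloor}V_k$, where $V_k$ is spanned by the monomials of degree $m-2k$ in the $x$-variables and degree $k$ in the $t$-variables. From the coordinate form above one reads off that $|x|^2\laplace_x$, $t^iT_i$ and the constant all preserve $V_k$, that the middle term of $|x|^2\Hlaplace$ sends $V_k$ into $V_{k-1}$, and that the $|x|^4\laplace_t$ term sends $V_k$ into $V_{k-2}$. Hence $L_m$ sends each $V_k$ into $\bigoplus_{j\le k}V_j$; it is block triangular for this decomposition, so it is invertible on $\PP_m$ if and only if each diagonal block $L_m\colon V_k\to V_k$ is invertible. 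On $V_k$, Euler's identity makes $t^iT_i = 2t^i\partial_{t^i}$ act as the scalar $2k$, so the $k$-th diagonal block is the operator $-|x|^2\laplace_x + \bigl(2k - m(m-1)\bigr)$ acting on the factor of degree-$(m-2k)$ homogeneous polynomials in $x\in\RR^{4n}$ alone.

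Finally I would compute the Euclidean spectrum. By the Fischer decomposition $\PP^x_d = \bigoplus_{2j\le d}|x|^{2j}\mathscr{H}_{d-2j}$, where $\mathscr{H}_\ell$ denotes the harmonic homogeneous polynomials of degree $\ell$ in $x$, the operator $|x|^2\laplace_x$ acts on $|x|^{2j}\mathscr{H}_{d-2j}$ as the scalar $2j(2d-2j+4n-2)$, which is $\ge 0$ and vanishes only when $j=0$. Consequently the eigenvalues of the $k$-th diagonal block are $\bigl(2k-m(m-1)\bigr) - 2j(2d-2j+4n-2)$ with $d=m-2k$, each of which is at most $2k - m(m-1) \le m - m(m-1) = -m(m-2)$ since $2k\le m$. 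For $m\ge3$ this bound is strictly negative, so every diagonal block is invertible and hence $L_m$ is invertible on $\PP_m$. For $m=2$ only the blocks $V_0$ (the degree-two polynomials in $x$) and $V_1 = \mathrm{span}\{t^i\}$ occur: the $V_0$-block is $-|x|^2\laplace_x - 2$, with eigenvalues $-2$ and $-8n-2$, hence invertible; the $V_1$-block is $-|x|^2\laplace_x + 0$ acting on constants, i.e.\ the zero map; and since the only off-diagonal term that could carry $V_1$ into $V_0$ annihilates functions independent of $x$, we conclude $\ker L_2 = V_1 = \mathrm{span}\{t^i\}$ while $L_2$ is invertible on the $x$-only subspace $V_0$, exactly as asserted.

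The only step that requires genuine care is the first one — pinning down the coordinate expression for $\Hlaplace$, and in particular verifying that the quadratic-in-$x$ contribution is precisely $-4|x|^2\laplace_t$ with no surviving $\tensor{I}{\up k \down{\beta\gamma}}x^\beta x^\gamma$ cross-terms. Once that identity is in hand, the bidegree bookkeeping and the Fischer decomposition of $\laplace_x$ are entirely standard, and I expect no further difficulty.
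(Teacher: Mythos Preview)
Your proposal is correct and follows essentially the same approach the paper has in mind: the paper defers to \cite[Lemma 3.9]{JerisonLee:1989}, and what you have written is precisely the QC adaptation of that argument---bidegree decomposition in $(x,t)$, block-triangularity of $L_m$, and the Fischer decomposition to read off the eigenvalues of $-|x|^2\laplace_x$ on each block. The computation of $\Hlaplace$ and the handling of the $m=2$ case are accurate.
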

\begin{proof}
The proof is essentially the same as the proof of Lemma 3.9 in \cite{JerisonLee:1989}.
\end{proof}

Using Lemma \ref{lem:parabolic_taylor} we may write
\[ \tilde{\Phi}_{(m)} = \sum_{o(abC)=m} \frac{1}{(\# C)!} \Big(\frac{1}{2}\Big)^{o(C)-\# C} x^a x^b x^C X_C \tilde{Q}_{ab}|_q\]
and
\[ \Phi_{(m)} = \sum_{o(abC)=m} \frac{1}{(\# C)!} \Big(\frac{1}{2}\Big)^{o(C)-\# C} x^a x^b x^C X_C Q_{ab}|_q.\]
Now if we choose $u\in \PP_m$, then by Lemma \ref{lem:covderchange}, the covariant derivatives $\tilde{Q}_{ab,C}$ with \mbox{$o(abC)=m$} may be computed with respect to the original connection with an error of order \mbox{$m-o(C)-1$}, which vanishes at $q$.  Furthermore, by our calculations above, $\tilde{Q}-Q \in \order{m}$ and so by Lemma \ref{lem:covD_vect_rel}, at $q$,
\[ \tilde{Q}_{ab,C} - Q_{ab,C} = X_C \tilde{Q}_{ab} - X_C Q_{ab}.\]
Thus
\begin{multline} \label{eq:u_eq} L_m u= \tilde{\Phi}_{(m)} - \Phi_{(m)} \\= \sum_{o(abC)=m} \frac{1}{(\# C)!} \Big(\frac{1}{2}\Big)^{o(C)-\# C}x^ax^b x^C \big(\tilde{Q}_{ab,C}(q) - Q_{ab,C}(q)\big).
\end{multline}

The next lemma is the key ingredient in showing that we may force symmetrized covariant derivatives of $Q$ to vanish by appropriately choosing $u$.
\begin{lem} \label{lem:order_m_vanish}Let $q\in M$ and $(x^\alpha, t^i)$ be pseudohermitian normal coordinates centered at $q$ for a pseudohermitian structure $\eta$.  For any $m\geq 2$, there is a polynomial $u \in \PP_m$ in the coordinates $(x,t)$ such that $\tilde{\eta} = e^{2u}\eta$ satisfies
\[ \tilde{Q}_{(ab,C)}(q) = 0 \text{ if } o(abC)=m.\]
For $m\geq 3$ the polynomial is unique, while for $m=2$, it is unique in $\mathcal{R}_2$.
\end{lem}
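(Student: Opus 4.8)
\section*{Proof proposal}

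The plan is to convert the condition ``$\tilde Q_{(ab,C)}(q)=0$ for all $o(abC)=m$'' into a single linear equation for $u$ on the finite-dimensional space $\PP_m$, and to solve that equation using the operator $L_m$ of Lemma~\ref{lem:invertop}.

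The first and main task is to see how the scalar $\Phi = Q(P,P) = x^a x^b Q_{ab}$ records $Q$. By Lemma~\ref{lem:parabolic_taylor}, $\Phi_{(m)}$ is a polynomial in $(x,t)$ whose coefficients are fixed universal combinations of the numbers $(X_C Q_{ab})|_q$ with $o(abC)=m$. Converting these to covariant derivatives by repeated use of Lemma~\ref{lem:covD_vect_rel}, and using that the curvature and torsion of $\eta$ at $q$ --- which is all the data produced in commuting $X$'s past $\nabla$'s, since the connection $1$-forms vanish at $q$ by Theorem~\ref{thm:paraboliccoordinates} --- is itself expressed through the values $Q_{\alpha\beta}(q)$, $Q_{\alpha i}(q)$, $Q_{ij}(q)$, hence through symmetrized covariant derivatives of $Q$ of total order $\le 4$, one finds that $\Phi_{(m)}$ is a linear function of the symmetrized covariant derivatives $Q_{(ab,C)}(q)$ of total order $\le m$ whose dependence on the top order $o(abC)=m$ is a linear isomorphism onto $\PP_m$. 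Consequently there is a computable polynomial $\rho\in\PP_m$, depending only on the symmetrized covariant derivatives of $Q$ of total order strictly less than $m$, such that a pseudohermitian structure whose $\Phi$-polynomial of order $m$ equals $\rho$ has all of its symmetrized covariant derivatives of $Q$ of total order $m$ vanishing at $q$.

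Next, for $u\in\PP_m$ and $\tilde\eta=e^{2u}\eta$ the transformation formulas for $\tilde Q-Q$ (together with $\tilde P=P+\order{m-1}$) give $\tilde\Phi-\Phi\in\order{m}$, so $\tilde\Phi_{(m')}=\Phi_{(m')}$ for $m'<m$; by the correspondence of the previous paragraph this means that passing to $\tilde\eta$ leaves all symmetrized covariant derivatives of $Q$ of total order $<m$ unchanged, while \eqref{eq:Phichange} (equivalently \eqref{eq:u_eq}) gives $\tilde\Phi_{(m)}=\Phi_{(m)}+L_m u$ with $L_m = |x|^2\Hlaplace + t^i T_i - m(m-1)$. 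Combining these, the desired normalization $\tilde Q_{(ab,C)}(q)=0$ for all $o(abC)=m$ is equivalent to the single equation
\[
L_m u = \rho - \Phi_{(m)},\qquad u\in\PP_m .
\]
For $m\ge 3$, Lemma~\ref{lem:invertop} says $L_m$ is invertible on $\PP_m$, so $u = L_m^{-1}(\rho-\Phi_{(m)})$ is the unique solution. For $m=2$ the condition $o(abC)=2$ forces $C=\emptyset$ and $a,b\le 4n$, so the only requirement is $\tilde Q_{\alpha\beta}(q)=0$, and both $\Phi_{(2)}$ and $\rho$ lie in the subspace $\mathcal{R}_2\subset\PP_2$ of polynomials in $x$ alone. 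Since $L_2$ carries $\mathcal{R}_2$ bijectively onto $\mathcal{R}_2$, annihilates each $t^i$ (one checks $L_2 t^i=0$ directly, using $\tr I^i=0$), and has $\ker L_2=\operatorname{span}\{t^1,t^2,t^3\}$ and $\operatorname{im}L_2=\mathcal{R}_2$, the equation is solvable, with solution unique in $\mathcal{R}_2$ and unique in $\PP_2$ only up to addition of an element of $\operatorname{span}\{t^i\}$ --- precisely the uniqueness asserted.

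I expect the real obstacle to be the bookkeeping in the first step: establishing that $\Phi_{(m)}$ faithfully records the symmetrized covariant derivatives of $Q$ of total order $m$ (isomorphically in the top order), which requires tracking orders of error in Lemmas~\ref{lem:parabolic_taylor}, \ref{lem:covD_vect_rel} and \ref{lem:covderchange} and checking that the non-symmetric corrections --- curvature and torsion values at $q$, and lower-order covariant derivatives --- enter only through lower-total-order data and so do not interfere with the solvability of the top-order equation. This is the quaternionic contact analogue of the corresponding step in \cite{JerisonLee:1989}; everything downstream of it is the elementary linear algebra of $L_m$ already supplied by Lemma~\ref{lem:invertop}, plus the $m=2$ degeneracy handled above.
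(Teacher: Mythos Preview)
Your approach is essentially correct, but you have made the argument harder than it needs to be, and the obstacle you flag at the end is one the paper sidesteps entirely.

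The paper does not attempt to write $\Phi_{(m)}$ itself in terms of symmetrized covariant derivatives of $Q$, and it introduces no auxiliary target polynomial $\rho$. Instead it exploits the \emph{second} equality in \eqref{eq:u_eq}, which already records $L_m u$ directly as
\[
L_m u \;=\; \sum_{o(abC)=m} \frac{1}{(\#C)!}\Big(\tfrac12\Big)^{o(C)-\#C} x^a x^b x^C\big(\tilde Q_{ab,C}(q)-Q_{ab,C}(q)\big).
\]
The passage from $X_C(\cdot)$ to covariant derivatives here costs nothing, because one is only converting the \emph{difference} $\tilde Q_{ab}-Q_{ab}$, which lies in $\order{m-2}$; by Lemmas~\ref{lem:covD_vect_rel} and~\ref{lem:covderchange} the error terms therefore vanish at $q$. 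Thus none of the lower-order curvature/torsion bookkeeping you anticipate is needed.

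One then simply solves
\[
L_m u \;=\; -\sum_{o(abC)=m} \frac{1}{(\#C)!}\Big(\tfrac12\Big)^{o(C)-\#C} x^a x^b x^C\, Q_{ab,C}(q),
\]
which by Lemma~\ref{lem:invertop} has a unique solution in $\PP_m$ for $m\ge 3$ (and in $\mathcal{R}_2$ for $m=2$, since the right side is then independent of $t$). Plugging back into \eqref{eq:u_eq} gives
\[
\sum_{o(abC)=m} \frac{1}{(\#C)!}\Big(\tfrac12\Big)^{o(C)-\#C} x^a x^b x^C\, \tilde Q_{ab,C}(q)=0.
\]
Because each coefficient $\frac{1}{(\#C)!}(1/2)^{o(C)-\#C}$ is strictly positive and the monomial $x^a x^b x^C$ is symmetric in all its indices, the coefficient of any monomial $x^D$ with $o(D)=m$ is a positive multiple of the full symmetrization $\tilde Q_{(D)}(q)$; hence all $\tilde Q_{(ab,C)}(q)$ with $o(abC)=m$ vanish. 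In your language this says $\rho=0$, and the ``real obstacle'' dissolves: one never needs to know how $\Phi_{(m)}$ depends on lower-order data, because the argument works entirely with the covariant-derivative form of \eqref{eq:u_eq} and bypasses $\Phi_{(m)}$ as an intermediary.
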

\begin{proof}
By Lemma \ref{lem:invertop}, if $m\geq 3$ there is a unique polynomial $u\in \PP_m$ such that 
\[ L_m u = -\sum_{o(abC)=m} \frac{1}{(\# C)!} \Big(\frac{1}{2}\Big)^{o(C)-\# C} x^a x^b x^C Q_{ab,C}|_q.\]
For $m=2$, the right hand side is independent of $t$, and so there is a unique polynomial in $\mathcal{R}_2$.  If we now set $\tilde{\eta} = e^{2u}\eta$, it follows from \eqref{eq:u_eq} that 
\[ \sum_{o(abC)=m} \frac{1}{(\# C)!} \Big(\frac{1}{2}\Big)^{o(C)-\# C} x^a x^b x^C \tilde{Q}_{ab,C}|_q = 0.\]
For any multiindex $abC$, the coefficient of $x^ax^bx^C$ is a nonzero multiple of $\tilde{Q}_{(ab,C)}$.  Thus we have determined the required polynomial.
\end{proof}

Finally, we come to the proof that symmetrized covariant derivatives of $Q$ can be made to vanish.
\begin{thm}[Main Theorem] \label{thm:symder_vanish}  Let $M$ be a QC manifold.  For any $q\in M$ and any $N\geq 2$, there is a choice of pseudohermitian structure $\eta$ such that all the symmetrized covariant derivatives of $Q$ with total order less than or equal to $N$ vanish at $q$; that is,
\[ Q_{(ab,C)}(q) = 0 \text{ if } o(abC)\leq N.\]
If we write $\eta = e^{2u} \tilde{\eta}$ for another pseudohermitian structure, we may arbitrarily choose the $1$-jet of $u$ at $q$.  Once this is fixed, the Taylor series of $u$ at $q$ is uniquely determined.
\end{thm}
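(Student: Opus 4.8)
The plan is to prove the theorem by induction on $N$, removing the obstructions one homogeneous order at a time with Lemma \ref{lem:order_m_vanish}. Fix the reference structure and, following the convention of the preceding lemmas, write the sought normalization as $\tilde\eta=e^{2u}\eta$ with $\eta$ the reference (this is the $u$ of the statement); work throughout in the parabolic normal coordinates $(x^\alpha,t^i)$ of $\eta$ centered at $q$. Since $Q$ is a $2$-tensor one always has $o(abC)\ge 2$, so the case $N=2$ is the base case: Lemma \ref{lem:order_m_vanish} with $m=2$ yields $u_2\in\mathcal{R}_2$, the $x$-only part of $\PP_2$ on which $L_2$ is invertible, such that $e^{2u_2}\eta$ satisfies $Q_{(\alpha\beta)}(q)=0$. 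For the inductive step, suppose $u_2,\dots,u_{m-1}$ have been chosen so that $\eta_{m-1}:=e^{2(u_2+\cdots+u_{m-1})}\eta$ satisfies $Q_{(ab,C)}(q)=0$ for all $o(abC)\le m-1$; applying Lemma \ref{lem:order_m_vanish} to $\eta_{m-1}$ at order $m$ produces $u_m\in\PP_m$ for which $\eta_m=e^{2u_m}\eta_{m-1}$ has $Q_{(ab,C)}(q)=0$ whenever $o(abC)=m$. The crucial point is that this does not disturb what was gained at lower orders: because $u_m\in\PP_m\subset\order{m}$, the transformation rules \eqref{eq:Lchange}, \eqref{eq:torsionchange}, \eqref{eq:Bchange}, \eqref{eq:scalarchange} give $\tilde Q-Q\in\order{m}$, so each component $(\tilde Q-Q)_{ab}$ has order $m-o(ab)$; by Lemma \ref{lem:covD_vect_rel} the derivative $(\tilde Q-Q)_{ab,C}$ equals $X_C(\tilde Q-Q)_{ab}$ up to an error of order $m-o(abC)+2$, and since $X_C$ lowers order by at most $o(C)$ the whole expression has order $\ge m-o(abC)\ge 1$ when $o(abC)<m$, hence vanishes at $q$; symmetrizing gives the same conclusion. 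Running the recursion from $m=2$ to $m=N$ and setting $u=u_2+\cdots+u_N$ produces an honest polynomial conformal factor with $Q_{(ab,C)}(q)=0$ for all $o(abC)\le N$, and continuing the recursion for every $m$ defines the full Taylor series of $u$.

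Next I would pin down the freedom and the uniqueness. The recursion never produces a $\PP_0$ or $\PP_1$ contribution, and by Lemma \ref{lem:invertop} it determines $u_2$ only modulo $\ker L_2=\operatorname{span}\{t^1,t^2,t^3\}$, while for every $m\ge 3$ the operator $L_m$ is invertible and $u_m$ is forced. Thus the total ambiguity in $u$ is an arbitrary element of $\PP_0\oplus\PP_1\oplus\operatorname{span}\{t^i\}$, which is exactly the data of the $1$-jet $j^1_q u$: the $\PP_0$ summand records $u(q)$, the $\PP_1$ summand the horizontal part of $du|_q$ (the coefficients of the $x^\alpha$), and the $\operatorname{span}\{t^i\}$ summand the vertical part of $du|_q$ (the coefficients of the $t^i$). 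To see any $1$-jet can be realized, I would first perform the conformal change by a representative $u^{[1]}\in\PP_0\oplus\PP_1\oplus\operatorname{span}\{t^i\}$ of the desired jet and then run the induction on the resulting structure, taking the corrections $u_m\in\PP_m$ with $u_2\in\mathcal{R}_2$; each such $u_m$ vanishes to ordinary order $\ge 2$ at $q$ (every monomial $x^At^B$ with $\#A+2\#B=m\ge 3$, and every monomial $x^\alpha x^\beta\in\mathcal{R}_2$, has ordinary degree $\ge 2$), a coordinate-independent property at $q$, so when expressed back in the coordinates of $\eta$ the tail $u_2+u_3+\cdots$ has trivial $1$-jet and $j^1_q u=j^1_q u^{[1]}$ is the prescribed data. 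Once $u^{[1]}$ is fixed, each subsequent $u_m$ is uniquely determined by \eqref{eq:u_eq} together with the invertibility of $L_m$ on $\PP_m$ (resp.\ of $L_2$ on $\mathcal{R}_2$), so the Taylor series of $u$ at $q$ is unique.

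I expect the main obstacle to be the $\order{m}$-bookkeeping behind the persistence claim — that the order-$m$ correction leaves every lower-order normalization intact — which comes down to the estimate $\tilde Q-Q\in\order{m}$ for $u\in\PP_m$ and the comparison of $\tilde\nabla$ with $\nabla$ from Lemmas \ref{lem:covD_vect_rel} and \ref{lem:covderchange}; most of this has already been carried out in the run-up to \eqref{eq:u_eq} and Lemma \ref{lem:order_m_vanish}, so here it is mainly a matter of assembling it. A secondary technical point is that the correction $u_m$ at step $m$ is naturally a polynomial in the normal coordinates of $\eta_{m-1}$ rather than of $\eta$; since $u_2+\cdots+u_{m-1}\in\order{2}$, Theorem \ref{thm:coordchange} shows those coordinate systems agree modulo $\order{3}$, so the $\order{m+1}$ discrepancies are absorbed and the construction may be done in the fixed coordinates of $\eta$ throughout (the $\PP_0$ part of $u^{[1]}$ merely dilates the coordinates parabolically, and for any fixed $N$ only finitely many orders are involved, so this causes no trouble). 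The overall argument parallels the corresponding result in the CR case in \cite{JerisonLee:1989}.
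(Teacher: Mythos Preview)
Your proposal is correct and follows essentially the same inductive strategy as the paper's proof: apply Lemma~\ref{lem:order_m_vanish} for $m=2,\dots,N$, using that a conformal factor in $\PP_m$ leaves $Q_{ab,C}(q)$ unchanged when $o(abC)<m$. You have supplied considerably more detail than the paper does---in particular the explicit persistence argument via $\tilde Q-Q\in\order{m}$ and Lemmas~\ref{lem:covD_vect_rel}, \ref{lem:covderchange}, the identification of the $1$-jet freedom with $\PP_0\oplus\PP_1\oplus\ker L_2$, and the remark on reconciling the successive normal coordinate systems via Theorem~\ref{thm:coordchange}---all of which is correct and matches what the paper leaves implicit.
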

\begin{proof}
We apply Lemma \ref{lem:order_m_vanish} repeatedly for $2\leq m \leq N$.  This works because using $u\in \PP_m$ as a conformal factor does not change terms of the form $Q_{ab,C}$ with $o(abC)<m$.  Choosing the $1$-jet of $u$ allows us to inductively determine higher order parts of the Taylor series.
\end{proof}

Using the above normalization for $Q$ and a host of identities from \cite{Vassilevetal:2006} and \cite{Vassilevetal:2007} we show that at the center point $q$, the Ricci tensor, scalar curvature, quaternionic contact torsion and many of their covariant derivatives vanish.
\begin{thm} \label{thm:mainthm} Let $M$ be a QC manifold and $\eta$ a pseudohermitian structure for which the symmetrized covariant derivatives of the tensor $Q$ vanish to total order $4$ at a point $q$.  The the following curvature and torsion terms vanish at $q$.
\begin{gather*}
S,\ \tau_{\alpha\beta},\ \mu_{\alpha\beta},\ L_{\alpha\beta},\ R_{\alpha\beta},\ T_{\alpha i \beta},\ T_{ijk} \\
T_{\alpha jk},\ S_{,\beta},\ \tensor{\mu}{\down{\alpha\beta,}\up\alpha},\ \tensor{\tau}{\down {\alpha\beta,} \up \alpha} \\
B_{ij},\ S_{,i},\ \tensor{S}{\down {,\alpha} \up \alpha},\ \tensor{\tau}{\down{\alpha\beta,}\up{\alpha\beta}},\ \tensor{\mu}{\down{\alpha\beta,}\up{\alpha\beta}},\ \tensor{R}{\down{\gamma i \beta}\up\gamma\down {, \alpha}}I^{i\beta\alpha}.
\end{gather*}
\end{thm}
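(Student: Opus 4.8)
The plan is to unpack the definition of $Q$ and peel off the listed vanishing statements one order at a time, using at each stage the algebraic relations of Propositions~\ref{prop:torsionprop} and \ref{prop:curvprop} (which recover $\tau$, $\mu$ and $S$ from $L$, $R_{\alpha\beta}$ and $\lambda$ via traces and the Casimir operator), the first Bianchi identity in the forms \eqref{eq:firstbianchi}--\eqref{eq:firstbianchi2}, and the contracted second Bianchi identities \eqref{eq:taumuS}--\eqref{eq:VHricci} together with their once-differentiated versions. We work in a special frame based at $q$, so that by Proposition~\ref{prop:coframeconnection} the connection $1$-forms, and hence $\nabla I_i$ and $\nabla\veps$, vanish at $q$; this is what lets the contractions below go through cleanly. \textbf{Order two.} Since $o(\alpha\beta)=2$, the hypothesis gives $Q_{\alpha\beta}(q)=0$ (note $Q_{\alpha\beta}$ is symmetric). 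Tracing with $g^{\alpha\beta}$ and using that $\tau$ and $\mu$ are trace-free forces $(4n+1)S(q)=0$, so $S(q)=0$, whence $L_{\alpha\beta}(q)=0$, i.e.\ $\tfrac12\tau_{\alpha\beta}+\mu_{\alpha\beta}=0$ at $q$; applying $\Casimir$ (eigenvalue $-1$ on $\tau$, $+3$ on $\mu$, Proposition~\ref{prop:torsionprop}) separates the two pieces and gives $\tau_{\alpha\beta}(q)=\mu_{\alpha\beta}(q)=0$. Then \eqref{eq:ricci} gives $R_{\alpha\beta}(q)=0$, Corollary~\ref{cor:torsion} gives $\tensor{T}{\down{\alpha i\beta}}(q)=0$, and \eqref{eq:scalar} with $\tensor{T}{\up k\down{ij}}=\lambda\,\tensor{\veps}{\down{ij}\up k}$ gives $T_{ijk}(q)=0$.

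\textbf{Order three.} From $o(\alpha i)=3$ we get $Q_{\alpha i}(q)=0$; since $A$ (hence $A^{-1}$) is invertible and $\veps$ identifies $V$ with $\bigwedge^2 V$, this is equivalent to $\tensor{T}{\down{\alpha jk}}(q)=0$. From $o(\alpha\beta\gamma)=3$ we get $Q_{(\alpha\beta,\gamma)}(q)=0$; contracting with $g^{\beta\gamma}$ gives $2\tensor{Q}{\down{\alpha\beta,}\up\beta}(q)+(\tr Q)_{,\alpha}(q)=0$, a linear relation among $\tensor{\tau}{\down{\alpha\beta,}\up\beta}$, $\tensor{\mu}{\down{\alpha\beta,}\up\beta}$ and $S_{,\alpha}$ at $q$. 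Evaluating \eqref{eq:taumuS} and \eqref{eq:tormu} at $q$ --- where the $\veps^{ijk}\tensor{T}{\up\beta\down{jk}}I_{i\beta\alpha}$ terms drop out because $\tensor{T}{\down{\alpha jk}}(q)=0$ --- gives two more linear relations; the resulting $3\times 3$ system has determinant proportional to $(4n+5)/(n+2)$, hence nonzero for $n>1$, so $S_{,\alpha}(q)=0$ and then $\tensor{\tau}{\down{\alpha\beta,}\up\beta}(q)=\tensor{\mu}{\down{\alpha\beta,}\up\beta}(q)=0$. Feeding these into \eqref{eq:VHricci} at $q$ gives $\tensor{R}{\down{\gamma i\beta}\up\gamma}\tensor{I}{\up{i\beta}\down\alpha}(q)=0$, an intermediate fact needed below.

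\textbf{Order four.} From $o(ij)=4$ we get $Q_{ij}(q)=0$, i.e.\ $B_{(ij)}(q)=0$. To upgrade this to $B_{ij}(q)=0$ we use \eqref{eq:firstbianchi2}: at $q$ the quadratic torsion terms vanish (since $\tensor{T}{\down{\alpha i\beta}}(q)=0$), so $B_{ij}(q)$ is a fixed contraction of $\tensor{T}{\down{\beta kl,}\up\alpha}$ with $\veps$ and the $I_i$; since $\tensor{I}{\down j\up{\alpha\beta}}$ is skew in $\alpha\beta$, only the part of $\tensor{T}{\down{\beta kl,}\up\alpha}$ skew in $(\alpha,\beta)$ contributes, and rewriting this through $Q_{\alpha i,\beta}$ (legitimate as $A$ and $\veps$ are parallel at $q$) together with a Bianchi/Ricci commutation identity reduces $B_{ij}(q)$ to a combination of the totally symmetrized $Q_{(\alpha i,\beta)}(q)=0$ and quantities already shown to vanish at $q$. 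The remaining conclusions $\tensor{S}{\down{,\alpha}\up\alpha}$, $S_{,i}$, $\tensor{\tau}{\down{\alpha\beta,}\up{\alpha\beta}}$, $\tensor{\mu}{\down{\alpha\beta,}\up{\alpha\beta}}$ and $\tensor{R}{\down{\gamma i \beta}\up\gamma\down {, \alpha}}I^{i\beta\alpha}$ follow by repeating the order-three argument one level up: $g$-trace and apply $\Casimir$ to the order-four part of the hypothesis ($Q_{(\alpha\beta,\gamma\delta)}$, $Q_{(\alpha\beta,i)}$ and $Q_{(\alpha i,\beta)}$ vanishing at $q$) to obtain linear relations among these quantities, and close the system with the once-differentiated, contracted forms of \eqref{eq:taumuS}--\eqref{eq:VHricci} evaluated at $q$; every lower-weight term that appears ($S_{,\alpha}$, $\tau_{\alpha\beta}$, $\tensor{T}{\down{\alpha jk}}$, $B_{[ij]}$, $\tensor{R}{\down{\gamma i\beta}\up\gamma}\tensor{I}{\up{i\beta}\down\alpha}$) has already been killed, and for $S_{,i}$ one additionally invokes the contracted Bianchi identity controlling $\tensor{T}{\down{\alpha jk,}\up\alpha}$ from \cite{Vassilevetal:2006, Vassilevetal:2007}.

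\textbf{Main obstacle.} The order-four step is the crux, and two points in it need care. First, showing that the part of $B_{ij}$ antisymmetric in $(i,j)$ vanishes at $q$: the index symmetrizations of $\tensor{T}{\down{\beta kl,}\up\alpha}$ that occur in $B_{ij}$ must be matched against those occurring in $Q_{(\alpha i,\beta)}$, with a Ricci-type commutation identity absorbing the leftover. Second, the bookkeeping in differentiating the contracted Bianchi identities: one must check that \emph{every} term of weight less than four produced along the way has already been shown to vanish, so that the weight-four linear system really does close and can be solved. The linear algebra itself --- verifying that the coefficient matrices built from the $n$-dependent constants are nonsingular for $n>1$ --- is routine but must be carried out.
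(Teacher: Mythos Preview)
Your outline follows the paper's proof almost exactly: orders two and three are handled identically (Casimir splitting, then the $3\times 3$ linear system built from \eqref{eq:taumuS}--\eqref{eq:tormu} together with the trace of $Q_{(\alpha\beta,\gamma)}$), and at order four you correctly anticipate a $4\times 4$ system from the once-differentiated Bianchi identities and the double trace of $Q_{(\alpha\beta,\gamma\delta)}$.

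The one place your sketch is looser than it should be is the treatment of $B_{[ij]}$ and $S_{,i}$. You propose extracting $B_{[ij]}(q)=0$ from $Q_{(\alpha i,\beta)}$ via ``a Bianchi/Ricci commutation identity'' and then obtaining $S_{,i}=0$ separately afterward; in the actual argument these two come out together, and no commutation identity is used. Concretely: tracing $Q_{(\alpha i,\beta)}$ on the horizontal indices gives $\tensor{Q}{\down\alpha\up\alpha\down{,i}}+2\tensor{Q}{\down{\alpha i,}\up\alpha}=0$. The first summand is $\tfrac{4n+1}{8(n+2)}S_{,i}$. For the second, using $A^{-1}=\tfrac18(A+2)$, the parallelism of $A$, $\veps$, $I$ at $q$, and the relation \eqref{eq:curvandtorsion}, a direct computation (no Ricci commutation) gives $\tensor{Q}{\down{\alpha i,}\up\alpha}=-\tfrac14\tensor{\veps}{\up{jk}\down i}B_{jk}$. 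The crucial extra input---which you only gesture at---is the specific identity $\tensor{\veps}{\up{jk}\down i}B_{jk}=-\tfrac{1}{4n(n+2)}S_{,i}$ (equation~(4.6) of \cite{Vassilevetal:2006}); with it the traced equation reads $\tfrac{4n^2+n+1}{8n(n+2)}S_{,i}=0$, so $S_{,i}(q)=0$ and $B_{[ij]}(q)=0$ simultaneously. Together with $B_{(ij)}(q)=0$ from $Q_{ij}$ this gives $B_{ij}(q)=0$, and the remaining order-four vanishings then close exactly as you describe.
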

\begin{proof}
Considering first the terms of $Q$ with multiindex of order $2$, we take the horizontal trace to find that at $q$,
\[ \tensor{Q}{\down \alpha \up \alpha} = \frac{4n+1}{8(n+2)} S = 0,\]
form which it is clear that the pseudohermitian scalar curvature vanishes.
It follows that at $q$,  \[0= Q_{\alpha\beta} = \frac{1}{2}\tau_{\alpha\beta} + \mu_{\alpha\beta}.\]  Since $\tau$ and $\mu$ lie in different eigenspaces of the Casimir operator $\Casimir$ by  Proposition \ref{prop:torsionprop}, they must both be zero.  Since $L_{\alpha\beta}$ and $R_{\alpha\beta}$ are determined by $\tau_{\alpha\beta}$, $\mu_{\alpha\beta}$ and $S$, they both vanish at $q$.

Now consider the terms of $Q$ with multiindex of order $3$.  Looking at $Q_{\alpha i}=0$, we immediately see that $T_{\alpha jk}=0$ since $\tensor{\veps}{\up {jk}\down i}$ is an isomorphism from $V$ to $\bigwedge^2V$.

Next we trace $Q_{(\alpha\beta,\gamma)}$ on any two indices to find
\[ 0= \tensor{Q}{\down \alpha \up \alpha \down {,\beta}} + 2\tensor{Q}{\down{\alpha\beta,}\up\alpha} = \frac{(4n+1)(2n+1)}{16n(n+2)}S_{,\beta} + \tensor{\tau}{\down {\alpha\beta,} \up\alpha} + 2\tensor{\mu}{\down{\alpha\beta,} \up \alpha}.\]
Since $T_{\alpha jk}=0$, combining this equation with equations \eqref{eq:taumuS} and \eqref{eq:tormu} shows that at $q$ the tensors $\tensor{\tau}{\down{\alpha\beta,}\up \alpha}$, $\tensor{\mu}{\down{\alpha\beta,}\up \alpha}$ and $S_{,\beta}$ satisfy the system of equations
\[\left(\begin{array}{ccc}1 & 2 & \frac{(4n+1)(2n+1)}{16n(n+2)} \\1 & -6 & -\frac{3}{16n(n+2)} \\1 & 0 & -\frac{3}{16(n+2)}\end{array}\right)\left(\begin{array}{c}\tensor{\tau}{\down{\alpha\beta,}\up \alpha} \\\tensor{\mu}{\down{\alpha\beta,}\up \alpha} \\S_{,\beta}\end{array}\right) = \left(\begin{array}{c}0 \\0 \\0\end{array}\right).\]
The coefficient matrix here is nonsingular and hence $\tensor{\tau}{\down{\alpha\beta,}\up \alpha}$, $\tensor{\mu}{\down{\alpha\beta,}\up \alpha}$ and $S_{,\beta}$ all vanish at $q$.

Moving on to terms of $Q$ with indices of order $4$, we see first that $Q_{ij}= -(1/16n)B_{(ij)}=0$.  Next, tracing $Q_{\alpha i,\beta}$ on the horizontal indices gives 
\[\tensor{Q}{\down{\alpha i,}\up \alpha} = -\tensor{(A^{-1})}{\down{i\alpha}\up{j\beta}} \tensor{T}{\down{\beta kl,}\up\alpha} \tensor{\veps}{\up {kl}\down j}.\]
From equation \eqref{eq:firstbianchi2} and the fact that $\tau_{\alpha\beta}$, $\mu_{\alpha\beta}$ and $T_{\alpha i \beta}$ vanish at $q$, we know that 
\begin{equation} \label{eq:curvandtorsion}
R_{kl\alpha\beta}\tensor{I}{\down j \up {\alpha\beta}} =T_{\beta kl,\alpha} \tensor{I}{\down j \up {\alpha\beta}}.
\end{equation}
Since the automorphism $A$ is parallel at $q$ and has inverse $A^{-1}=(1/8)(A+2)$ and $\tensor{T}{\down{\beta kl,}\up\alpha}$ is antisymmetric in the horizontal indices, we have
\begin{align*}
 \tensor{Q}{\down {\alpha i,}\up\alpha} &= -\tensor{(A^{-1})}{\down {i\alpha}\up {j\beta}} \tensor{T}{\down{\beta kl,}\up\alpha} \tensor{\veps}{\up {kl}\down j} \\
   &= -\frac{1}{4}\tensor{\veps}{\up {jp}\down i}\tensor{I}{\down {p\alpha} \up \beta}\tensor{T}{\down{\beta kl,}\up\alpha} \tensor{\veps}{\up {kl}\down j}\\
   &= -\frac{1}{4}\tensor{\veps}{\up {jp}\down i}\tensor{I}{\down {p\alpha} \up \beta}\tensor{R}{\down{kl\alpha}\up\beta} \tensor{\veps}{\up {kl}\down j}\\
   &= -\frac{1}{4}\tensor{\veps}{\up{jk}\down i} B_{jk}.
\end{align*}
Equation (4.6) from \cite{Vassilevetal:2006} tells us that
\[ \tensor{\veps}{\up {jk}\down i}B_{jk} = -\frac{1}{4n(n+2)} S_{,i},\]
thus tracing $Q_{(\alpha\beta,i)}$ on the horizontal indices yields
\[ 0=\tensor{Q}{\down \alpha \up \alpha \down {,i}} + 2 \tensor{Q}{\down {\alpha i,} \up \alpha} = \frac{4n^2+n+1}{8n(n+2)}S_{,i} .\]
Since the $V$-volume form is an isomorphism,  this also shows that the antisymmetric part of $B_{ij}$ vanishes.  We already know the symmetric part vanishes, so $B_{ij}=0$ at $q$.

Finally, using \eqref{eq:curvandtorsion} and the the fact that the almost complex structures and the $V$-volume form are parallel at $q$, equations \eqref{eq:taumuS}, \eqref{eq:tormu} and \eqref{eq:VHricci} become
\begin{align*}
0&=\tensor{\tau}{\down {\alpha\beta,}\up {\beta\alpha}}  - 6 \tensor{\mu}{\down {\alpha\beta,}\up {\beta\alpha}} + \frac{4n-1}{2}\tensor{B}{\down i \up i} - \frac{3}{16n(n+2)}\tensor{S}{\down{,\alpha}\up\alpha} \\
0&=\tensor{\tau}{\down {\alpha\beta,}\up {\beta\alpha}}  +  \frac{n+2}{2}\tensor{B}{\down i \up i} - \frac{3}{16(n+2)}\tensor{S}{\down{,\alpha}\up\alpha}\\
0  &=\tensor{\tau}{\down {\alpha\beta,}\up {\beta\alpha}}  - 3\tensor{\mu}{\down {\alpha\beta,}\up {\beta\alpha}}  -2\tensor{B}{\down i \up i} - \tensor{R}{\down{\gamma i \beta}\up\gamma\down , \up \alpha}\tensor{I}{\up{i\beta}\down \alpha}.
\end{align*}
We know $Q_{(\alpha\beta,\gamma\delta)}=0$ and so tracing on any two pairs of indices yields
\[0= 2\tensor{Q}{\down{\alpha\beta,}\up{\alpha\beta}} + \tensor{Q}{\down \alpha \up \alpha \down {,\beta}\up \beta} = \tensor{\tau}{\down{\alpha\beta,}\up{\alpha\beta}}+2\tensor{\mu}{\down{\alpha\beta,}\up{\alpha\beta}} + \frac{(4n+1)(2n+1)}{16n(n+2)}\tensor{S}{\down {,\alpha}\up\alpha}.\]
Since $B_{ij}=0$, we therefore have the following system of equations
\[\left(\begin{array}{cccc}1 & -6 & -\frac{3}{16n(n+2)} & 0 \\1 & 0 & -\frac{3}{16(n+2)} & 0 \\1 & -3 & 0 & -1 \\1 & 2 & \frac{(4n+1)(2n+1)}{16n(n+2)} & 0\end{array}\right)\left(\begin{array}{c}\tensor{\tau}{\down{\alpha\beta,}\up{\alpha\beta}} \\\tensor{\mu}{\down{\alpha\beta,}\up{\alpha\beta}} \\\tensor{S}{\down{,\alpha}\up\alpha} \\\tensor{R}{\down{\gamma i \beta}\up\gamma\down , \up \alpha}\tensor{I}{\up{i\beta}\down \alpha}\end{array}\right)=\left(\begin{array}{c}0 \\0 \\0 \\0\end{array}\right).\]
As before, the coefficient matrix is nonsingular, and hence each of $\tensor{\tau}{\down{\alpha\beta,}\up{\alpha\beta}}$, $\tensor{\mu}{\down{\alpha\beta,}\up{\alpha\beta}}$, $\tensor{S}{\down{,\alpha}\up\alpha}$, and $\tensor{R}{\down{\gamma i \beta}\up\gamma\down , \up \alpha}\tensor{I}{\up{i\beta}\down \alpha}$ vanishes at $q$.  This completes the proof.

 \end{proof}
\section{Scalar polynomial invariants} \label{sec:scal_invariants}

\subsection{More normalizations}

A critical next step in the solution to the Yamabe problem is to consider an asymptotic expansion of the Yamabe functional for a suitable class of test functions.  In doing so, we expect to encounter as coefficients certain polynomial tensors in the QC curvature and torsion, to which we are able to assign a weight, defined below.  By analogy with the conformal and CR cases, we expect to be required to consider terms that have weight no more than four.  Further, the work of the preceding section will allow us to show that, at the origin in QC pseudohermitian normal coordinates, our normalizations imply that the only such tensors of weight at most four are dimensional constants and the square norm of the QC conformal curvature tensor \eqref{eq:QCconfcurv}.

Let us now define the weight of a tensor as follows. 
\begin{defn} Suppose $F$ is a homogeneous polynomial in $(x,t)$ whose coefficients are polynomial expressions in the curvature, torsion and the covariant derivatives at $q$.  We define the \emph{weight} $w(F)$ recursively by
\begin{enumerate}
\item $w(T_{abc,D}(q)) = o(bcD) - o(a)$,
\item $w(R_{abcd,E}(q)) = o(abcE)-o(d) = o(abE)$ since $c$ and $d$ always have the same order,
\item $w(F_1F_2) = w(F_1)+w(F_2)$,
\item $w(g_{ab}(q)) = w(g^{ab}(q)) = w(I_{i\alpha\beta}(q)) = w(\veps_{ijk}(q)) = w(c) = 0$,
\item if $w(F_A)=m$ for all $A$, then $w(\sum_A F_A x^A) = m$.
\end{enumerate}
Here $c$ denotes an arbitrary constant, independent of the pseudohermitian structure.  We also let $w(0)=m$ for all $m$.
\end{defn}

We will be interested in the polynomials of weight less than or equal to $4$.  To simplify matters, we recall that $R_{abij}$ is determined by $R_{ab\alpha\beta}$ and the only terms of weight $1$ are identically $0$, namely $T_{\alpha\beta\gamma}$ and $T_{ij\alpha}$.   Table \ref{tab:weights} lists the remaining curvature and torsion terms, organized by weight.
\begin{table}[b]
\centering
\caption{Curvature and torsion terms of weight less than or equal to $4$.}
\begin{tabular}{ccccc}
$0$ & $2$ & $3$ & $4$ \\\hline
$g_{\alpha\beta}$ & $T_{\alpha i \beta}$ & $T_{\alpha ij}$ & $T_{\alpha ij,\beta}$ \\
$g_{ij}$ & $T_{ijk}$ & $T_{\alpha i \beta,\gamma}$ & $T_{\alpha i \beta, \gamma\delta}$ \\
$I_{i\alpha\beta}$ & $R_{\alpha\beta\gamma\delta}$ & $R_{\alpha\beta\gamma\delta,\rho}$ & $T_{\alpha i \beta, j}$ \\
$\veps_{ijk}$ &  & $R_{\alpha i \beta \gamma}$ & $R_{\alpha\beta\gamma\delta,\rho\sigma}$ \\
 &  &  & $R_{\alpha\beta\gamma\delta,i}$ \\
 &  &  & $R_{\alpha i \beta\gamma,\delta}$ \\
 &  &  & $R_{ij\beta\gamma}$
\end{tabular}
\label{tab:weights}
\end{table}
Also, recall that the torsion term $T_{\alpha i \beta}$ is determined by the tensors $\tau_{\alpha\beta}$ and $\mu_{\alpha\beta}$ as in equation \eqref{eq:qctorsion}.

At this point it is convenient to introduce a few more tensors and relations between them from those found in \cite{Vassilevetal:2006}.  These are all found by contracting the curvature tensor against the almost complex structures in various ways.  We define
\begin{equation} \label{eq:ricciforms}
\rho_{iab} = \frac{1}{4n}R_{ab \alpha\beta} \tensor{I}{\down i \up {\beta\alpha}}, \ 
\zeta_{iab} = \frac{1}{4n} R_{\alpha ab \beta}\tensor{I}{\down i \up {\beta\alpha}}, \  
\sigma_{iab} = \frac{1}{4n} R_{\alpha\beta ab} \tensor{I}{\down i \up {\beta\alpha}}.
\end{equation}
These tensors satisfy the following relations, found in \cite[Lemma 3.11]{Vassilevetal:2006} and \cite[Theorem 2.4]{Vassilevetal:2007}.
\begin{prop} \label{prop:ricciprop} We may write the tensors $\rho$, $\zeta$, and $\sigma$ in terms of the torsion and scalar curvature as
\begin{align}
\rho_{i\alpha\beta} &= \frac{1}{2}(\tau_{\alpha\gamma}\tensor{I}{\down i \up \gamma \down \beta} - \tau_{\gamma\beta}\tensor{I}{\down i \up \gamma \down \alpha}) + 2 \mu_{\alpha\gamma}\tensor{I}{\down i \up \gamma\down \beta} - \frac{S}{8n(n+2)}I_{i\alpha\beta}, \label{eq:rho}\\
\zeta_{i\alpha\beta} &= -\frac{2n+1}{4n}\tau_{\alpha\gamma}\tensor{I}{\down i \up \gamma \down \beta} + \frac{1}{4n}\tau_{\gamma\beta}\tensor{I}{\down i \up \gamma \down \alpha} + \frac{2n+1}{2n}\mu_{\alpha\gamma}\tensor{I}{\down i \up \gamma\down \beta} + \frac{S}{16n(n+2)}I_{i\alpha\beta}, \label{eq:zeta}\\
\sigma_{i\alpha\beta} &= \frac{n+2}{2n}(\tau_{\alpha\gamma}\tensor{I}{\down i \up \gamma \down \beta} - \tau_{\gamma\beta}\tensor{I}{\down i \up \gamma \down \alpha}) -  \frac{S}{8n(n+2)}I_{i\alpha\beta}. \label{eq:sigma}
\end{align}
Further, $\rho$ and $\sigma$ are antisymmetric in the horizontal indices, and
\begin{equation}\label{eq:rzsscalar}
\rho_{i\alpha\beta}I^{i\alpha\beta} =  \sigma_{i\alpha\beta}I^{i\alpha\beta}=-\frac{3S}{2(n+2)},\quad \zeta_{i\alpha\beta}I^{i\alpha\beta} = \frac{3S}{4(n+2)}.
\end{equation}
Finally, the curvature tensor $R_{ab\alpha\beta}$ satisfies
\[ R_{ab\alpha\beta} = \mathfrak{R}_{ab\alpha\beta} + \rho_{iab}\tensor{I}{\up i \down {\alpha\beta}},\]
where $\mathfrak{R}_{ab\alpha\beta}$ is the $\mathfrak{sp}(n)$ component of $R_{ab\alpha\beta}$, and hence commutes with the almost complex structures in the second pair of indices.
\end{prop}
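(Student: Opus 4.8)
The relations \eqref{eq:rho}--\eqref{eq:sigma}, the scalar contractions \eqref{eq:rzsscalar}, and the splitting $R_{ab\alpha\beta} = \mathfrak{R}_{ab\alpha\beta} + \rho_{iab}\tensor{I}{\up i \down{\alpha\beta}}$ are all proved in \cite[Lemma 3.11]{Vassilevetal:2006} and \cite[Theorem 2.4]{Vassilevetal:2007}; the plan here is to indicate the argument one would run. The structural starting point is that, since the Biquard connection preserves $g$ and the subbundle $Q$ (Theorem \ref{thm:biquardconn}), for every pair of indices $a,b$ the endomorphism $R(\xi_a,\xi_b)|_H$ of $H$ lies in $\mathfrak{sp}(n)\oplus\mathfrak{sp}(1)$, where $\mathfrak{sp}(1)$ is spanned by $I^1,I^2,I^3$ and $\mathfrak{sp}(n)$ consists of the endomorphisms commuting with every $I^i$. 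Since the $I^i$ are mutually orthogonal with $\tr(I^iI^j) = -4n\,g_{ij}$ by \eqref{eq:ACS}, decomposing $R_{ab\alpha\beta}$ along these two summands in the index pair $\alpha\beta$ and normalizing by $1/(4n)$ produces precisely the claimed splitting, with $\mathfrak{R}_{ab\alpha\beta}\tensor{I}{\down i \up{\beta\alpha}} = 0$ and $\mathfrak{sp}(1)$-component $\rho_{iab}$. This also makes the antisymmetry claims automatic, since $R_{\alpha\beta\gamma\delta}$ is skew in $\gamma\delta$ (because $\nabla g = 0$) and skew in $\alpha\beta$ (curvature is always skew in its first two slots), so the contractions defining $\rho$ and $\sigma$ inherit skewness in their horizontal indices, whereas $\zeta$ pairs one index from each side and has no such symmetry.

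To obtain the explicit formulas \eqref{eq:rho}--\eqref{eq:sigma} I would feed this splitting into the first Bianchi identity for a connection with torsion $T$,
\[ R(X,Y)Z + R(Y,Z)X + R(Z,X)Y = \sum_{\text{cyc}}\big((\nabla_X T)(Y,Z) + T(T(X,Y),Z)\big), \]
evaluated on $X,Y,Z\in H$. Here the derivative terms disappear: the only torsion that occurs is $T(\xi_\alpha,\xi_\beta) = -2\tensor{I}{\up i \down{\alpha\beta}}R_i$, and because $\nabla\vphi = 0$ matches the connection forms on $V$ with those on $Q$, a direct computation using $\nabla g = 0$ shows that the restriction of $T$ to $H\times H$ is $\nabla$-parallel, i.e.\ $(\nabla_X T)(Y,Z) = 0$ whenever $X,Y,Z\in H$. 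Thus the cyclic sum over the first three horizontal indices of $R_{\alpha\beta\gamma\delta}$ is purely algebraic, with each $T(T(\xi_\alpha,\xi_\beta),\xi_\gamma) = -2\tensor{I}{\up i \down{\alpha\beta}}T(R_i,\xi_\gamma)$ rewritten through Corollary \ref{cor:torsion} in terms of $\tau$ and $\mu$. Contracting the free ($\delta$) slot of this identity with the almost complex structures in the three available ways, and using the horizontal Ricci formula \eqref{eq:ricci} together with the torsion structure of Proposition \ref{prop:torsionprop} as the anchor, one solves the resulting linear relations for $\rho_{i\alpha\beta}$, $\zeta_{i\alpha\beta}$ and $\sigma_{i\alpha\beta}$, which is \eqref{eq:rho}--\eqref{eq:sigma}. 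Finally \eqref{eq:rzsscalar} follows by contracting these three formulas with $I^{i\alpha\beta}$: the identities \eqref{eq:ACS} collapse every $\tau$- and $\mu$-term to a multiple of $\tr\tau$ or $\tr\mu$, both of which vanish since $\tau$ and $\mu$ are trace-free, while $I_{i\alpha\beta}I^{i\alpha\beta} = 12n$ turns the remaining $S$-terms into the stated multiples of $S$.

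The only genuine obstacle is bookkeeping: one must keep straight which of the several four-index contractions of $R$ is in play at each stage, carry the torsion-dependent failure of the pair symmetry $R_{ab\alpha\beta}\neq R_{\alpha\beta ab}$ through the manipulations, and then invert the linear system expressing $\rho$, $\zeta$ and $\sigma$ in terms of $\tau$, $\mu$ and $S$. There is no conceptual difficulty beyond that, which is why the detailed computation is left to \cite{Vassilevetal:2006} and \cite{Vassilevetal:2007}.
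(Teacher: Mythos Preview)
Your proposal is correct and matches the paper's approach: the paper itself does not supply a proof of this proposition, instead attributing the identities to \cite[Lemma 3.11]{Vassilevetal:2006} and \cite[Theorem 2.4]{Vassilevetal:2007}, exactly as you do. Your additional sketch---the $\mathfrak{sp}(n)\oplus\mathfrak{sp}(1)$ decomposition of $R(\xi_a,\xi_b)|_H$ yielding the splitting and the definition of $\rho$, the first Bianchi identity with torsion combined with the parallelism of $T|_{H\times H}$ (which follows from $\nabla\vphi=0$ and $\nabla g=0$), and the final contraction using trace-freeness of $\tau$ and $\mu$---is a faithful outline of how those references actually carry out the computation, so there is nothing to correct.
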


As mentioned above, our interest lies in an asymptotic expansion of the Yamabe functional for which we will need to consider scalar pseudohermitian invariants of weight at most $4$.  In particular we would like to know that, for a pseudohermitian structure normalized as in section \ref{sec:coords}, the only interesting terms are constants independent of the structure, and the square norm of the QC conformal curvature tensor.  This is the content of the following
\begin{thm} \label{thm:noscalarterms} Let $M$ be a QC manifold with pseudohermitian structure $\eta$ normalized according to Theorems \ref{thm:symder_vanish} and \ref{thm:mainthm}.  Then, at the center of the normalization, $q$, the only invariant scalar quantities of weight no more than $4$ constructed as polynomials from the invariants listed in Table \ref{tab:weights} are constants independent of the structure and $\norm{W}^2$, the squared norm of the QC conformal curvature tensor; in particular, all other invariant scalar terms vanish at $q$.
\end{thm}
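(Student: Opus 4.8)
The plan is to prove this by a weight-graded enumeration of scalar invariants, in the spirit of \cite[Section~3]{JerisonLee:1989}, using the vanishing statements of Theorem~\ref{thm:mainthm} at every step. A scalar invariant of weight $w$ at $q$ is, by definition, a linear combination of complete contractions of tensor products of the curvature and torsion terms listed in Table~\ref{tab:weights} against copies of $g_{\alpha\beta}$, $g^{\alpha\beta}$, $g_{ij}$, $I_{i\alpha\beta}$ and $\veps_{ijk}$, of total weight $w$. The basic bookkeeping device is a parity count: the contracting tensors $g_{\alpha\beta}$, $g^{\alpha\beta}$ and $I_{i\alpha\beta}$ carry an even number of horizontal indices while $g_{ij}$ and $\veps_{ijk}$ carry none, so a complete contraction of a product of curvature and torsion factors can exist only if those factors together supply an even number of horizontal indices. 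Since a product containing a weight-$1$ factor ($T_{\alpha\beta\gamma}$ or $T_{ij\alpha}$) is identically zero, a weight-$w$ scalar is built only from blocks of weight $\geq 2$.

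First I would dispose of weights $\leq 3$. At weight $0$ only constants appear. A weight-$3$ scalar must be a single weight-$3$ block, and each of $T_{\alpha ij}$, $T_{\alpha i\beta,\gamma}$, $R_{\alpha\beta\gamma\delta,\rho}$ and $R_{\alpha i\beta\gamma}$ carries an odd number of horizontal indices, so by the parity observation no weight-$3$ scalar exists. A weight-$2$ scalar is a single weight-$2$ block: the only complete contractions are $\veps^{ijk}T_{ijk}$, $I^{i\alpha\beta}T_{\alpha i\beta}$ and the traces (including $I$-traces) of $R_{\alpha\beta\gamma\delta}$; the first and third are proportional to the scalar curvature $S$ by \eqref{eq:scalar} and \eqref{eq:rzsscalar}, which vanishes at $q$ by Theorem~\ref{thm:mainthm}, while the second is a multiple of $\tensor{\mu}{\down\alpha\up\alpha}$ by \eqref{eq:qctorsion} and hence zero since $\mu$ is trace-free. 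Thus every scalar invariant of weight $2$ or $3$ vanishes at $q$.

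The weight-$4$ case is the heart of the matter. Such a scalar is either (i) a complete contraction of a product of two weight-$2$ blocks, or (ii) a complete contraction of a single weight-$4$ block. For (i), Theorem~\ref{thm:mainthm} gives $T_{\alpha i\beta}=0$ and $T_{ijk}=0$ at $q$, so the only weight-$2$ factor surviving at $q$ is $R_{\alpha\beta\gamma\delta}$, and these terms reduce to complete contractions of $R\otimes R$. For (ii), I would go through the seven types $T_{\alpha ij,\beta}$, $T_{\alpha i\beta,\gamma\delta}$, $T_{\alpha i\beta,j}$, $R_{\alpha\beta\gamma\delta,\rho\sigma}$, $R_{\alpha\beta\gamma\delta,i}$, $R_{\alpha i\beta\gamma,\delta}$ and $R_{ij\beta\gamma}$ one at a time. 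Using the symmetry and trace-freeness of $\tau$ and $\mu$ (so that any contraction of $\tau$ or $\mu$ against $I_{i\alpha\beta}$ or against $g^{\alpha\beta}$ vanishes), the structural identities \eqref{eq:qctorsion}, \eqref{eq:ricci}, \eqref{eq:firstbianchi2} and \eqref{eq:rho}--\eqref{eq:rzsscalar}, the first and second Bianchi identities, and Propositions~\ref{prop:torsionprop}, \ref{prop:curvprop} and \ref{prop:ricciprop}, each complete contraction reduces to a linear combination of $S$, $S_{,\beta}$, $S_{,i}$, $\tensor{S}{\down{,\alpha}\up\alpha}$, $T_{\alpha jk}$, $\tensor{\tau}{\down{\alpha\beta,}\up\alpha}$, $\tensor{\mu}{\down{\alpha\beta,}\up\alpha}$, $\tensor{\tau}{\down{\alpha\beta,}\up{\alpha\beta}}$, $\tensor{\mu}{\down{\alpha\beta,}\up{\alpha\beta}}$, $B_{ij}$, $\tensor{R}{\down{\gamma i\beta}\up\gamma}\tensor{I}{\up{i\beta}\down\alpha}$ and $\tensor{R}{\down{\gamma i\beta}\up\gamma\down{,\alpha}}\tensor{I}{\up{i\beta\alpha}}$, every one of which vanishes at $q$ by Theorem~\ref{thm:mainthm}. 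Two elementary facts carry most of the load here: commuting a horizontal with a vertical covariant derivative changes a term only by curvature or torsion factors already appearing in that list, and the Ricci identity for two horizontal derivatives contributes only a vertical derivative (via $\tensor{T}{\up i \down{\gamma\delta}}=-2\tensor{I}{\up i \down{\gamma\delta}}$) together with a curvature factor that is quadratic and hence absorbed into case (i).

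It then remains to show that at $q$ every complete contraction of $R\otimes R$ is a constant multiple of $\norm{W}^2$. Since $\tau_{\alpha\beta}=\mu_{\alpha\beta}=S=0$ at $q$, equations \eqref{eq:rho}--\eqref{eq:sigma} give $\rho=\zeta=\sigma=0$ there, so $R_{\alpha\beta\gamma\delta}$ coincides with its $\mathfrak{sp}(n)$-component by Proposition~\ref{prop:ricciprop}; moreover $L_{\alpha\beta}=0$ at $q$, so \eqref{eq:QCconfcurv} gives $W_{\alpha\beta\gamma\delta}=R_{\alpha\beta\gamma\delta}$ at $q$. Hence every contraction of $R\otimes R$ at $q$ is a contraction of $W\otimes W$; those that pair an $I_{i\alpha\beta}$ across two indices of a single $W$, or that trace a single $W$ on an index pair, vanish by the defining trace conditions on $W$ in \cite{Vassilevetal:2007}, and the remaining fully "doubled'' contractions are reduced to $\norm{W}^2=W_{\alpha\beta\gamma\delta}W^{\alpha\beta\gamma\delta}$ using the first Bianchi identity for $W$ (equations (3.1)--(3.2) of \cite{Vassilevetal:2007}). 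This gives the claim. The principal obstacle is the lengthy case-by-case reduction in step (ii), together with a careful verification that the space of quadratic $Sp(n)Sp(1)$-invariants of the QC conformal curvature tensor is one-dimensional.
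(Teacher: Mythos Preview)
Your proposal is correct and follows essentially the same strategy as the paper's proof: a weight-by-weight enumeration, the parity argument at weight~$3$, the split at weight~$4$ into (i) quadratic terms in $R_{\alpha\beta\gamma\delta}$ and (ii) single weight-$4$ blocks, and the reduction of all contractions in (ii) to the quantities of Theorem~\ref{thm:mainthm} via the structural identities and Bianchi identities.

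Two places where the paper is substantially more explicit than your outline are worth flagging. First, for the $R\otimes R$ contractions you say the ``fully doubled'' cases reduce to $\norm{W}^2$ by the first Bianchi identity; in fact Bianchi alone is not enough---the paper organizes these contractions by the number of $I$-factors (zero through four) and uses repeatedly that $R_{\alpha\beta\gamma\delta}\in\mathfrak{sp}(n)\otimes\mathfrak{sp}(n)$ at $q$ (your observation that $\rho=0$) to commute each $I$ through a curvature factor, thereby collapsing pairs of $I$'s via \eqref{eq:ACS}. You do note this $\mathfrak{sp}(n)$-property, but it should be cited as the main reduction tool alongside Bianchi. Second, for $R_{\alpha\beta\gamma\delta,\rho\sigma}$ the paper isolates four three-$I$ contractions $A,B,C,D$ (those with $\veps^{ijk}$ and no metric trace) that do not reduce directly to your list; they are shown to vanish by contracting the twice-differentiated algebraic and once-differentiated differential Bianchi identities against the $I$'s and $\veps$, yielding a $4\times4$ homogeneous linear system with only the trivial solution. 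Your sketch (``using the first and second Bianchi identities'') covers this in principle, but you should be prepared to carry out that system explicitly.
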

\begin{proof}
We consider the terms by weight.  First we notice that the composition law of the almost complex structures puts an upper bound on the number of such factors that appear in any complete contraction.  Namely, the number of almost complex structures is no more than half the number of horizontal indices, since otherwise, some of the almost complex structures would contract together, resulting in a reduction to fewer such structures by equation \eqref{eq:ACS}.

Before we begin the consideration of the invariants, it is first useful to recall several of the important identities that we have described in previous sections.  In particular we will be using the following equations repeatedly:
\begin{gather*}
\tensor{T}{\up \alpha\down {i\beta}} = \frac{1}{4}(\tensor{\tau}{\up \alpha \down \gamma} \tensor{I}{\down i \up \gamma \down \beta}  + \tensor{I}{\down i \up \alpha \down \gamma} \tensor{\tau}{\up \gamma \down \beta}) + \tensor{I}{\down i \up \alpha \down \gamma}\tensor{\mu}{\up \gamma \down \beta},\tag{\ref{eq:qctorsion}}\\
R_{\alpha\beta}= (2n+2)\tau_{\alpha\beta} + 2(2n+5)\mu_{\alpha\beta} + \frac{S}{4n}g_{\alpha \beta}, \tag{\ref{eq:ricci}}\\
\intertext{and the fact that}
\tensor{A}{\up \alpha \down \beta}\tensor{I}{\down i \up \beta \down \gamma} = \tensor{I}{\down i \up \alpha \down \beta} \tensor{A}{\up \beta \down \gamma} \text{ for any } A\in \mathfrak{sp}(n).
\end{gather*}
We will also need  the identities of Theorem \ref{thm:mainthm} and Proposition \ref{prop:ricciprop}.

For the terms of weight $0$ and $1$, the proposition is clear.  For terms of weight $2$, we know that $T_{\alpha i \beta}$ and $T_{ijk}$ already vanish at $q$ by Proposition \ref{prop:torsionprop}, since the normalizations of Theorem \ref{thm:mainthm} guarantee that $\tau_{\alpha\beta}$, $\mu_{\alpha\beta}$ and $S$ all vanish there.  Then clearly any contractions of these factors also vanish.  From the curvature $4$-tensor, the only complete contraction involving only the metric is the scalar curvature which vanishes at $q$.  The remaining contractions involve two almost complex structures contracted on their vertical indices and yield contractions of the tensors $\rho_{i\alpha\beta}$, $\sigma_{i\alpha\beta}$ and $\zeta_{i\alpha\beta}$ with the almost complex structures.  Then Proposition \ref{prop:ricciprop} shows that these all reduce to multiples of the scalar curvature and hence vanish at $q$.

The terms of weight $3$ are easier to deal with since these factors all have an odd number of horizontal indices.  Since these must be contracted in pairs by either $g_{\alpha\beta}$ or $I_{i\alpha\beta}$, there can be no scalars constructed from them.

Finally, the terms of weight $4$ are the most complicated.  There are two ways in which we can arrive at a term of weight $4$: by taking a product of two factors of weight $2$, or taking a single factor of weight $4$, and then applying terms of weight $0$ to contract to a scalar.  

Let us first consider the case of a product of two factors of weight $2$.  In such a case the only possible contractions involve a square of the curvature, since all the torsion terms vanish even  before taking contractions.  We will break this case into smaller sets based on the number of almost complex structures appearing in the contractions.  Also, we note that because the torsion terms vanish, along with $\rho_{i\alpha\beta}$, the curvature tensor satisfies all the standard Riemannian algebraic Bianchi identities, and commutes with the almost complex structures in either the first or second pair of indices.  Further, the two horizontal indices in any metric or almost complex structure factor must be split between the two curvature factors; if not, one of the curvature factors becomes either a Ricci tensor or one of $\rho_{i\alpha\beta}$, $\sigma_{i\alpha\beta}$, or $\zeta_{i\alpha\beta}$, all of which vanish at $q$.

\begin{enumerate}
\item \emph{No almost complex structures:}  In this case all contractions are handled by the metric.  By the reasoning above, we may assume the first curvature factor is $R_{\alpha\beta\gamma\delta}$ and that the second is the raised index version, with the indices some permutation of $\alpha\beta\gamma\delta$.  Then the antisymmetry in the first and second pair of indices and the ability to switch the first pair with the second pair reduces the 24 possibilities to the following three:
\begin{gather*}
R_{\alpha\beta\gamma\delta}R^{\alpha\beta\gamma\delta} , \\
R_{\alpha\beta\gamma\delta}R^{\alpha\gamma\beta\delta}, \text{ and }
R_{\alpha\beta\gamma\delta}R^{\alpha\delta\beta\gamma}.
\end{gather*}
The first term is the squared norm of the QC conformal curvature tensor, since by \eqref{eq:QCconfcurv} $R_{\alpha\beta\gamma\delta}=W_{\alpha\beta\gamma\delta}$ when $L_{\alpha\beta}=0$.  The last two terms here are negatives of each other since $R_{\alpha\beta\gamma\delta}$ is antisymmetric in $\gamma$ and $\delta$.  Then the algebraic Bianchi identity and vanishing of the torsion at $q$ show that
\begin{equation}\label{eq:usingalgBianchi}
 0= R_{\alpha\beta\gamma\delta}(R^{\alpha\beta\gamma\delta}+R^{\beta\gamma\alpha\delta}+R^{\gamma\alpha\beta\delta}) = \norm{W}^2 -2 R_{\alpha\beta\gamma\delta}R^{\alpha\gamma\beta\delta}.
\end{equation}
Thus the remaining terms are also multiples of the squared norm of the QC conformal curvature tensor.

\item \emph{One almost complex structure:}  Such terms are not possible since they would leave an uncontracted vertical index.

\item \emph{Two almost complex structures:} Two almost complex structures are necessarily contracted on their vertical indices, yielding a Casimir operator.  Further, to contract the remaining horizontal indices we require two metric factors.  Here we take the first factor to be $R_{\alpha\beta\gamma\delta}$ or $R_{\alpha\gamma\beta\delta}$, so that we are always contracting on the $\alpha$ and $\beta$ indices, and leave the second to be some contraction of curvature and the Casimir operator.  By rearranging indices using symmetries, up to a sign, we may assume that the first index of the second curvature factor is contracted against the first index of the first curvature factor.  For the second metric contraction, it may either be on the second indices of both curvature factors, on the third indices of both factors, or up to a sign, between the second index of the first factor and the third index of the second factor.  Considering these possibilities yields the following list:
\begin{subequations}
\begin{gather}
R_{\alpha\beta\gamma\delta}\tensor{R}{\up {\alpha\beta} \down {\mu \nu}} \tensor{I}{\down i \up {\gamma\mu}}I^{i\delta\nu} \label{eq:ACS2.1},\\
R_{\alpha\beta\gamma\delta}\tensor{R}{\up \alpha \down \mu \up \beta \down \nu} \tensor{I}{\down i \up {\gamma\mu}}I^{i\delta\nu} = -
R_{\alpha\beta\gamma\delta}\tensor{R}{\up \alpha \down \nu \up \beta \down \mu} \tensor{I}{\down i \up {\gamma\mu}}I^{i\delta\nu} \label{eq:ACS2.2},\\
R_{\alpha\gamma\beta\delta}\tensor{R}{\up \alpha \down \mu \up \beta \down \nu} \tensor{I}{\down i \up {\gamma\mu}}I^{i\delta\nu} \label{eq:ACS2.3},\\
R_{\alpha\gamma\beta\delta}\tensor{R}{\up \alpha \down \nu \up \beta \down \mu} \tensor{I}{\down i \up {\gamma\mu}}I^{i\delta\nu} \label{eq:ACS2.4}.
\end{gather}
\end{subequations}
The first term is a multiple of $\norm{W}^2$ since $R_{\alpha\beta\gamma\delta}$ is in $\mathfrak{sp}(n)\otimes \mathfrak{sp}(n)$ at $q$ and hence commutes with the almost complex structures.  That is,
\[ R_{\alpha\beta\gamma\delta}\tensor{R}{\up {\alpha\beta} \down {\mu \nu}} \tensor{I}{\down i \up {\gamma\mu}}I^{i\delta\nu} = R_{\alpha\beta\gamma\delta}\tensor{R}{\up {\alpha\beta\gamma} \down {\mu}} \tensor{I}{\down i \up {\mu}\down \nu}I^{i\delta\nu} = 3R_{\alpha\beta\gamma\delta}R^{\alpha\beta\gamma\delta}.\]
The terms \eqref{eq:ACS2.2} are negatives of each other and by an application of the algebraic Bianchi identity as in \eqref{eq:usingalgBianchi} are seen to be proportional to \eqref{eq:ACS2.1}.  In \eqref{eq:ACS2.3} we compute
\begin{equation} \label{eq:usingsymantisym}
\begin{aligned}
R_{\alpha\gamma\beta\delta}\tensor{R}{\up \alpha \down \mu \up \beta \down \nu} \tensor{I}{\down i \up {\gamma\mu}}I^{i\delta\nu}
&=R_{\alpha\gamma\delta\nu}\tensor{R}{\up \alpha \down \mu \up \beta \down \nu} \tensor{I}{\down i \up {\gamma\mu}}I^{i\beta\delta} &&\text{by commuting}\\
&=R_{\alpha\gamma\beta\delta}\tensor{R}{\up \alpha \down \mu \up \nu \down \delta} \tensor{I}{\down i \up {\gamma\mu}}I^{i\nu\beta} &&\text{renaming indices}\\
&=-R_{\alpha\gamma\delta\beta}\tensor{R}{\up \alpha \down \mu \up \delta \down \nu} \tensor{I}{\down i \up {\gamma\mu}}I^{i\beta\nu} &&\text{by symmetries}\\
&=-R_{\alpha\gamma\beta\delta}\tensor{R}{\up \alpha \down \mu \up \beta \down \nu} \tensor{I}{\down i \up {\gamma\mu}}I^{i\delta\nu}. &&\text{renaming indices}
\end{aligned}
\end{equation}
Therefore \eqref{eq:ACS2.3} vanishes at $q$.  Finally, again using the algebraic Bianchi identity as in \eqref{eq:usingalgBianchi}, \eqref{eq:ACS2.4} is seen to be proportional to \eqref{eq:ACS2.1}.

\item \emph{Three almost complex structures:}  Three almost complex structures is a viable option, where the three vertical indices are contract together with $\veps^{ijk}$.  The remaining horizontal indices are contracted with a metric factor, which by symmetries we may assume to be the first index in each curvature term.  Then, using the remaining symmetries we have the following terms:
\begin{subequations}
\begin{gather}
R_{\alpha\beta\gamma\delta} \tensor{R}{\up \alpha \down{\mu\nu\rho}} \tensor{I}{\down i \up {\beta\mu}}\tensor{I}{\down j \up {\gamma\nu}}\tensor{I}{\down k \up {\delta\rho}}\veps^{ijk} \label{eq:ACS3.1},\\
R_{\alpha\beta\gamma\delta} \tensor{R}{\up \alpha \down{\rho\mu\nu}} \tensor{I}{\down i \up {\beta\mu}}\tensor{I}{\down j \up {\gamma\nu}}\tensor{I}{\down k \up {\delta\rho}}\veps^{ijk} \label{eq:ACS3.2},\\
R_{\alpha\beta\gamma\delta} \tensor{R}{\up \alpha \down{\nu\rho\mu}} \tensor{I}{\down i \up {\beta\mu}}\tensor{I}{\down j \up {\gamma\nu}}\tensor{I}{\down k \up {\delta\rho}}\veps^{ijk} \label{eq:ACS3.3}.
\end{gather}
\end{subequations}
Using the fact that the curvature commutes with the almost complex structures in either the first or second pair of indices, an analysis similar to that of equation \eqref{eq:usingsymantisym} shows that \eqref{eq:ACS3.1} vanishes.  By commuting the almost complex structures in the remaining terms with the curvature factors and then contracting the almost complex structures together, we reduce to the case of only two almost complex structures, which has already been dealt with.  For example
\begin{align*}
R_{\alpha\beta\gamma\delta} \tensor{R}{\up \alpha \down{\rho\mu\nu}} \tensor{I}{\down i \up {\beta\mu}}\tensor{I}{\down j \up {\gamma\nu}}\tensor{I}{\down k \up {\delta\rho}}\veps^{ijk}  
&= \tensor{R}{\down{\alpha\beta\gamma}\up\nu} \tensor{R}{\up \alpha \down{\rho\mu\nu}} \tensor{I}{\down i \up {\beta\mu}}\tensor{I}{\down j \up {\gamma}\down\delta}\tensor{I}{\down k \up {\delta\rho}}\veps^{ijk}  \\
&=  \tensor{R}{\down{\alpha\beta\gamma}\up\nu} \tensor{R}{\up \alpha \down{\rho\mu\nu}} \tensor{I}{\down i \up {\beta\mu}}(-\delta_{jk}g^{\gamma\rho} + \veps_{jkl}I^{l\gamma\rho}) \veps^{ijk}\\
&= 2\tensor{R}{\down{\alpha\beta\gamma}\up\nu} \tensor{R}{\up \alpha \down{\rho\mu\nu}}\tensor{I}{\down i \up {\beta\mu}}I^{l\gamma\rho} ,
\end{align*}
again by Proposition \ref{prop:ACSandVform}.  By symmetries of the curvature tensor, this is equivalent to a multiple of \eqref{eq:ACS2.4}.

\item \emph{Four almost complex structures:} Finally, for the case of four almost complex structures we have the following possibilities up to symmetries:
\begin{subequations}
\begin{gather}
R_{\alpha\beta\gamma\delta}R_{\mu\nu\rho\sigma}\tensor{I}{\down i \up {\alpha\mu}}\tensor{I}{\up i \up {\beta\nu}}\tensor{I}{\down j \up {\gamma\rho}}\tensor{I}{\up j \up {\delta\sigma}},\\
R_{\alpha\beta\gamma\delta}R_{\mu\rho\nu\sigma}\tensor{I}{\down i \up {\alpha\mu}}\tensor{I}{\up i \up {\beta\nu}}\tensor{I}{\down j \up {\gamma\rho}}\tensor{I}{\up j \up {\delta\sigma}},\\
R_{\alpha\beta\gamma\delta}R_{\mu\sigma\nu\rho}\tensor{I}{\down i \up {\alpha\mu}}\tensor{I}{\up i \up {\beta\nu}}\tensor{I}{\down j \up {\gamma\rho}}\tensor{I}{\up j \up {\delta\sigma}},\\
R_{\alpha\gamma\beta\delta}R_{\mu\rho\nu\sigma}\tensor{I}{\down i \up {\alpha\mu}}\tensor{I}{\up i \up {\beta\nu}}\tensor{I}{\down j \up {\gamma\rho}}\tensor{I}{\up j \up {\delta\sigma}},\\
R_{\alpha\gamma\beta\delta}R_{\mu\sigma\nu\rho}\tensor{I}{\down i \up {\alpha\mu}}\tensor{I}{\up i \up {\beta\nu}}\tensor{I}{\down j \up {\gamma\rho}}\tensor{I}{\up j \up {\delta\sigma}}.
\end{gather}
\end{subequations}
As in the previous cases, commuting almost complex structures with the curvature factors shows the first term to be a multiple of $\norm{W}^2$, and shows that the remaining terms reduce to the case of only two or three almost complex structures, all of which have been dealt with.
\end{enumerate}

Now we turn to the terms of weight $4$ that do not result from a product of terms of weight $2$.  From $T_{\alpha ij,\beta}$, the only possible contraction is $T_{\alpha ij, \beta}\veps^{ijk}\tensor{I}{\down k \up {\alpha\beta}}$.  But from equations \eqref{eq:defineB} and \eqref{eq:firstbianchi} we see that this expression is $\tensor{B}{\down i \up i} = 0$.

Next, we consider $T_{\alpha i \beta, \gamma\delta}$.  Instead of dealing with this directly, we recall equation \eqref{eq:qctorsion} which tells us that this is determined by $\tau_{\alpha\beta}$, $\mu_{\alpha\beta}$, their covariant derivatives, and the almost complex structures and their covariant derivatives.  Since the almost complex structures are parallel at $q$ and $\tau$ and $\mu$ vanish there, we need only consider the terms $\tau_{\alpha\beta, \gamma\delta}$ and $\mu_{\alpha\beta,\gamma\delta}$.  We deal with $\tau$, since the case for $\mu$ is identical.  We know that $\tau_{\alpha\beta}$ is a trace-free tensor, and so $\tensor{\tau}{\down \alpha \up \alpha \down{,\beta} \up \beta}=0$ locally.  Further by Theorem \ref{thm:mainthm}, $\tensor{\tau}{\down {\alpha\beta,}\up {\alpha\beta}}=0$ at $q$.  Also, tracing $\tau$ against any of the almost complex structures is identically $0$, and so, at $q$
\[ \tau_{\alpha\beta,\gamma\delta} I^{i\alpha\beta}\tensor{I}{\down i \up {\gamma\delta}} = (\tau_{\alpha\beta} I^{i\alpha\beta}\tensor{I}{\down i \up {\gamma\delta}})_{,\gamma\delta} +\ldots\]
Here ``$\ldots$'' denotes terms that contain either $\tau_{\alpha\beta}$ undifferentiated, or terms involving a first derivative of the almost complex structures.  In either case, these terms are $0$ at $q$, and so $\tau_{\alpha\beta,\gamma\delta} I^{i\alpha\beta}\tensor{I}{\down i \up {\gamma\delta}}$ vanishes there.  Finally we may form the trace $\tau_{\alpha\beta,\gamma\delta} I^{i\alpha\gamma}\tensor{I}{\down i \up {\beta\delta}}$.  This term is dealt with similarly, by recalling that $\tau$ is in the $(-1)$-eigenspace of the Casimir operator $\Casimir = I_iI^i$.  As mentioned above, the case for $\mu$ is almost identical, since $\mu$ is trace-free, symmetric and in the $3$-eigenspace of $\Casimir$.

Next in the list is $T_{\alpha i\beta,j}$, which we again deal with by considering $\tau_{\alpha\beta,j}$ and $\mu_{\alpha\beta,j}$.  As above, the two tensors are handled almost identically, and so we present only the case for $\tau$.  The only trace we may form is $\tau_{\alpha\beta,j}I^{j\alpha\beta}$.  But as above, since the almost complex structures are parallel at $q$ we have, at $q$,
\[ \tau_{\alpha\beta,j} I^{j\alpha\beta} = (\tau_{\alpha\beta}I^{j\alpha\beta})_{,j} = 0.\]

Now for the curvature terms of weight $4$.  Beginning with $R_{ij\alpha\beta}$, the only complete contraction is $R_{ij\alpha\beta}\tensor{I}{\down k \up {\alpha\beta}} \veps^{ijk} = \tensor{B}{\down i\up i} = 0$.  Next we consider $R_{\alpha\beta\gamma\delta,i}$.  As mentioned above, we need only consider contractions involving no more than two almost complex structures.  Further, since there is already a vertical index, we must have at least one.  If we have one almost complex structure, we must also have a metric term, and the only possible metric contraction in this case is to $R_{\alpha\beta,i}$.  Since the Ricci tensor is symmetric, we see that $R_{\alpha\beta,i}I^{i\alpha\beta}=0$.  If we admit two almost complex structures, we need to include an $\veps^{ijk}$ term to contract the vertical indices.  Since the almost complex structures are parallel at $q$ we have the following possibilities:
\begin{equation}
\rho_{j\alpha\beta,i}\tensor{I}{\down k \up {\alpha\beta}} \veps^{ijk},\quad 
\sigma_{j\alpha\beta,i}\tensor{I}{\down k \up {\alpha\beta}} \veps^{ijk},\quad
\zeta_{j\alpha\beta,i}\tensor{I}{\down k \up {\alpha\beta}} \veps^{ijk}.
\end{equation}
But at $q$, Proposition \ref{prop:ricciprop} shows that modulo terms that vanish at $q$, these three tensors can be expressed in terms of contractions of $\tau_{\alpha\beta,i}$, $\mu_{\alpha\beta,i}$ and $S_{,i}$, all of which have already been dealt with above.

Next, consider $R_{\alpha i \beta \gamma, \delta}$.  From \cite[Theorem 3.1]{Vassilevetal:2007} we know that 
\begin{multline}
R_{\alpha i \beta \gamma} = \mu_{\delta\gamma,\alpha}\tensor{I}{\down i \up \delta \down \beta} + \frac{1}{4}(\tau_{\gamma\delta,\beta}\tensor{I}{\down i \up \delta \down \alpha} + \tau_{\delta\alpha,\beta} \tensor{I}{\down i \up \delta\down \gamma} - \tau_{\delta \alpha,\gamma} \tensor{I}{\down i \up \delta\down \beta} - \tau_{\beta\delta,\gamma}\tensor{I}{\down i \up \delta \down \alpha})\\ - I_{j\beta\alpha}\tensor{T}{\down \gamma \up j \down i} + I_{j\gamma\alpha}\tensor{T}{\down \beta \up j \down i} + I_{j\gamma\beta}\tensor{T}{\down \alpha \up j \down i}.
\end{multline}
Using this and contracting $R_{\alpha i \beta\gamma,\delta}$ against all possible combinations of weight $0$ terms to form scalar quantities yields terms that contain one of the following:
\begin{itemize}
\item a covariant derivative of an almost complex structure;
\item a contraction of $T_{\alpha ij,\beta}$ that reduces to $\tensor{B}{\down i \up i}$ at $q$ (see \eqref{eq:defineB} and \eqref{eq:firstbianchi});
\item the trace of $T_{\alpha i j ,\beta}$ on the vertical indices;
\item the trace of the action of the Casimir operator on $\tau$ or $\mu$; or
\item a second divergence of $\tau$ or $\mu$.
\end{itemize}
Each of these terms has been considered already in the above analysis and shown to be $0$ at $q$.  Thus, there are no nontrivial scalars to be formed from $R_{\alpha i \beta\gamma,\delta}$.

Finally we come to $R_{\alpha\beta\gamma\delta,\rho\sigma}$.  There are six complete contractions that contain a metric contraction, namely
\begin{gather*} 
\tensor{R}{\down{\alpha\beta,} \up{\alpha\beta}},\quad \tensor{S}{\down {,\alpha} \up \alpha},\quad R_{\alpha\beta,\gamma\delta}\tensor{I}{\down i \up {\alpha\beta}}\tensor{I}{\up{i\gamma\delta}},\quad R_{\alpha\beta,\gamma\delta}\tensor{I}{\down i \up {\alpha\gamma}}\tensor{I}{\up{i\beta\delta}},\\ \tensor{R}{\down{\alpha\beta\gamma\delta,\rho}\up \rho} \tensor{I}{\down i \up {\alpha\beta}}\tensor{I}{\up{i\gamma\delta}},\quad \tensor{R}{\down{\alpha\beta\gamma\delta,\rho}\up \rho} \tensor{I}{\down i \up {\alpha\gamma}}\tensor{I}{\up{i\beta\delta}}.
\end{gather*}
The first two here vanish at $q$ since $\tensor{\tau}{\down{\alpha\beta,}\up {\alpha\beta}}=\tensor{\mu}{\down{\alpha\beta,}\up {\alpha\beta}}=\tensor{S}{\down{,\alpha}\up {\alpha}}=0$ and the Ricci tensor is determined by these tensors via \eqref{eq:ricci}.
The third term vanishes because the horizontal Ricci tensor is symmetric, while the fourth vanishes at $q$ since it represents the action of the Casimir operator on the second covariant derivatives of $\tau$ and $\mu$, which has already been considered above.  The last two terms may be calculated by first tracing with the almost complex structures and then differentiating, since the differences vanish at $q$.  We can then easily compute that these also vanish at $q$.  Notice that all terms that contain a second covariant derivative of the almost complex structures also contain factors like $R_{\alpha\beta\gamma\delta}\tensor{I}{\down i\up{\delta\gamma}} = \rho_{i\alpha\beta}=0$.

The remaining terms are to be found from $R_{\alpha\beta\gamma\delta,\rho\sigma}$ by contracting the indices in pairs with three distinct almost complex structures, and then using $\veps^{ijk}$ to contract to a scalar.  If we contract the two derivative indices with an almost complex structure, we recover only the antisymmetric part of the second covariant derivative, which may be rewritten in terms of only one covariant derivative in the vertical direction.  But notice that we have already taken care of those terms.  By the antisymmetry of the curvature in the first and second pairs of indices, it therefore remains to consider the following four terms
\begin{subequations}\label{eq:ABCD}
\begin{gather} 
A = R_{\alpha\beta\gamma\delta,\rho\sigma} I^{i\alpha\beta}I^{j\sigma\delta}I^{k\rho\gamma}\veps_{ijk},\\
B= R_{\alpha\beta\gamma\delta,\rho\sigma} I^{i\delta\beta}I^{j\sigma\alpha}I^{k\rho\gamma}\veps_{ijk},\\
C=R_{\alpha\beta\gamma\delta,\rho\sigma} I^{i\beta\gamma}I^{j\sigma\delta}I^{k\rho\alpha}\veps_{ijk}, \\
D=R_{\alpha\beta\gamma\delta,\rho\sigma} I^{i\gamma\delta}I^{j\sigma\beta}I^{k\rho\alpha}\veps_{ijk}.
\end{gather}
\end{subequations}

To show that these are all zero we recall the algebraic and differential Bianchi identities for the curvature tensor.  These may be found, for example, in \cite{KN:1996}.  Differentiating the algebraic Bianchi identity twice and the differential Bianchi identity once we have
\begin{multline}
R_{\alpha\beta\gamma\delta,\rho\sigma} + R_{\beta\gamma\alpha\delta,\rho\sigma} + R_{\gamma\alpha\beta\delta,\rho\sigma} =\\ 2 ( T_{\delta i \gamma} \tensor{I}{\up i \down{\beta\alpha}} + T_{\delta i \alpha}\tensor{I}{\up i \down{\gamma\beta}} + T_{\delta i \beta}\tensor{I}{\up i \down{\alpha\gamma}})_{,\rho\sigma}, \label{eq:bianchione}
\end{multline}
\begin{multline}
R_{\alpha\beta\gamma\delta,\rho\sigma} + R_{\beta\rho\gamma\delta,\alpha\sigma} + R_{\rho\alpha\gamma\delta,\beta\sigma} =\\ 2(\tensor{I}{\up i \down{\beta\alpha}}R_{\rho i \gamma\delta} + \tensor{I}{\up i \down {\rho\beta}}R_{\alpha i \gamma \delta} + \tensor{I}{\up i \down{\alpha\gamma}}R_{\beta i \gamma\delta})_{,\sigma} \label{eq:bianchitwo}.
\end{multline}
Contracting the right hand sides of \eqref{eq:bianchione} and \eqref{eq:bianchitwo} with the almost complex structures and $\veps$ as in \eqref{eq:ABCD} yields scalar terms constructed of quantities we have already shown to vanish at $q$.  Contracting the left hand sides then, we arrive at the following equations for $A$, $B$, $C$ and $D$ at $q$,
\[A +2C = 0,\quad B-C-D = 0,\quad A-2C = 0, \quad 2B=0.\]
For example,
\begin{align*}
(R_{\alpha\beta\gamma\delta,\rho\sigma} + R_{\beta\gamma\alpha\delta,\rho\sigma} + &R_{\gamma\alpha\beta\delta,\rho\sigma}) I^{i\alpha\beta}I^{j\sigma\delta}I^{k\rho\gamma}\veps_{ijk} \\
&= A + R_{\beta\gamma\alpha\delta,\rho\sigma} I^{i\alpha\beta}I^{j\sigma\delta}I^{k\rho\gamma}\veps_{ijk} \\
&\qquad+ R_{\gamma\alpha\beta\delta,\rho\sigma} I^{i\alpha\beta}I^{j\sigma\delta}I^{k\rho\gamma}\veps_{ijk} \\
&= A + R_{\alpha\beta\gamma\delta,\rho\sigma}I^{i\gamma\alpha}I^{j\sigma\delta}I^{k\rho\beta}\veps_{ijk}\\
&\qquad + R_{\alpha\beta\gamma\delta,\rho\sigma}I^{i\beta\gamma}I^{j\sigma\delta}I^{k\rho\alpha}\veps_{ijk}\\
&= A - R_{\alpha\beta\gamma\delta,\rho\sigma}I^{i\gamma\beta}I^{j\sigma\delta}I^{k\rho\alpha}\veps_{ijk} + C\\
&= A + R_{\alpha\beta\gamma\delta,\rho\sigma}I^{i\beta\gamma}I^{j\sigma\delta}I^{k\rho\alpha}\veps_{ijk} + C\\ 
&= A + 2C.
\end{align*}
Again, this equals zero since the terms on the left-hand side of equation \eqref{eq:bianchione} have already been show to contract to $0$.  From these equations, it is clear that $A=B=C=D=0$ at $q$.  This concludes the proof.
\end{proof}

%
\bibliographystyle{amsalpha}
\bibliography{biblio}

\end{document}